\documentclass{article}
\usepackage{preamble}

\usepackage[style=alphabetic, 
maxbibnames=5, backend=biber, 
doi=false, isbn=false, url=false]{biblatex}
\addbibresource{literature.bib}

\title{Segalification and the Boardman--Vogt tensor product}

\author{Shaul Barkan and Jan Steinebrunner}

\date{\today}

\begin{document}

\maketitle

\begin{abstract}
    We develop an analog of Dugger and Spivak's necklace formula, providing an explicit description of the Segal space generated by an arbitrary simplicial space.
    We apply this to obtain a formula for the Segalification of $n$-fold simplicial spaces, a new proof of the invariance of right fibrations,
    and a new construction of the Boardman--Vogt tensor product of $\infty$-operads, for which we also derive an explicit formula.
\end{abstract}

\tableofcontents

\section*{Introduction}

The nerve functor $\xN_\bullet\colon \mrm{Cat}_1 \to \sSet$ has a left adjoint, which assigns to a simplicial set $X$ its homotopy category $\mrm{ho}(X)$.
The homotopy category $\mrm{ho}(X)$ has as objects 
the $0$-simplices of $X$, and its morphisms are generated by the $1$-simplices of $X$ modulo the relations imposed by the $2$-simplices.
In the setting of $\infty$-categories, the nerve $\xN_\bullet \colon \Cat \to \PSh(\simp)$ is given by $\xN_n\calC \coloneq \Fun([n], \calC)^\simeq$
and participates in an adjunction
\[
    \mbm{C} \colon \PSh(\simp) \adj \Cat :\! \xN_\bullet
\]
where the left adjoint $\mbm{C}$ is left Kan extended from the inclusion $\simp \subset \Cat$.
The purpose of this note is to give a formula for $\mbm{C}(X)$.
\clearpage
The functor $\xN_\bullet$ is fully faithful and its essential image consists of the complete Segal spaces in the sense of Rezk \cite{rezk}.
Recall that a \hldef{Segal space}
is a simplicial space $X \colon \Dop \to \calS$ for which the natural map $X_n \to X_1 \times_{X_0} \cdots \times_{X_0} X_1$ is an equivalence for all $n$.
A Segal space is \hldef{complete} if the map $s_0\colon X_0 \to X_1$ induces an equivalence onto a certain union of components $X_1^{\rm eq} \subset X_1$.
Letting $\PSh_\CSS(\simp) \subset \PSh_\seg(\simp) \subset \PSh(\simp)$ denote the full subcategories of (complete) Segal spaces,
we can factor the adjunction $\mbm{C} \dashv \xN_\bullet$ as
\[\begin{tikzcd}
	\Cat & {\PSh_\CSS(\simp)} & {\PSh_\seg(\simp)} & {\PSh(\simp)}
	\arrow["{\xN_\bullet}", from=1-1, to=1-2]
	\arrow[""{name=0, anchor=center, inner sep=0}, shift right=2, hook, from=1-2, to=1-3]
	\arrow[""{name=1, anchor=center, inner sep=0}, shift right=2, hook, from=1-3, to=1-4]
	\arrow[""{name=2, anchor=center, inner sep=0}, "{\Lcpl}"', shift right=1, from=1-3, to=1-2]
	\arrow[""{name=3, anchor=center, inner sep=0}, "{\Lseg}"', shift right=1, from=1-4, to=1-3]
	\arrow["\simeq"', draw=none, from=1-1, to=1-2]
	\arrow["\dashv"{anchor=center, rotate=-90}, draw=none, from=2, to=0]
	\arrow["\dashv"{anchor=center, rotate=-90}, draw=none, from=3, to=1]
\end{tikzcd}\]
In his foundational work on complete Segal spaces \cite{rezk},
Rezk provides a formula for the Rezk-completion functor $\Lcpl$.
The purpose of this note is to provide a formula for the ``Segalification'' functor $\Lseg$.
Combining the two, one obtains an explicit description of the \category{} generated by an arbitrary simplicial space.

\subsubsection{Necklaces and Segalification}

Our formula for $\Lseg$ is heavily influenced by the work of Dugger and Spivak \cite{Dugger-Spivak} on the rigidification of quasicategories. 
It will involve a colimit indexed by a certain category of ``necklaces'' \cite[\S 3]{Dugger-Spivak}, which we now recall.
A \hldef{necklace} is a simplicial set 
$N = \Delta^{n_1}\vee \cdots \vee \Delta^{n_k}$
obtained by joining standard simplices at their start- and endpoints, as indicated in \cref{fig:necklace}.
\begin{figure}[h]
\centering
\tikzstyle{every node}=[circle, draw, fill=black,
                        inner sep=0pt, minimum width=2.5pt]
\begin{tikzpicture}[
    scale = .9,
    auto,
    ver/.style={circle,fill=white}]
    \filldraw[fill=black!20!white, draw=black]
    (0.2, 0) node {} -- (.4, .6) node {} -- (1.2,-.2) -- cycle;
    \filldraw[fill=black!20!white, draw=black]
    (1.2, -.2) -- (3.7,.3) -- (3, -.4) -- cycle;
    \draw[black] (3, -.4) node {} -- (3.7, .3) node {};
    \node at (1.2,-.2) {};
    \draw[black] (3.7, .3) node{} -- (4.7,-.3) node{} -- (5.4, .5) node{};

    \draw[->] (6,.2) to node[fill=white, draw=white, above=3pt] {$\lambda$} +(1,0);

    \begin{scope}[xshift = 7.5cm]
    \filldraw[fill=black!20!white, draw=black]
    (0.2, 0) node {} -- (1.2,-.2);
    \filldraw[fill=black!20!white, draw=black]
    (1.2, -.2) -- (2,.7) -- (3.7, .3) -- (3, -.4) -- cycle;
    \draw[black, dashed] (1.2,-.2) -- (3.7, .3);
    \draw[black] (2,.7) node{} -- (3, -.4) node{};
    \node at (1.2,-.2) {};
    \node at (3.7,.3) {};
    \filldraw[fill=black!20!white, draw=black]
    (3.7, .3) node{} -- (4.7,-.3) node{} -- (5.4, .5) node{} -- cycle;
    \end{scope}
\end{tikzpicture}
\caption{A morphism of necklaces $\lambda\colon \Delta^2 \vee \Delta^2 \vee \Delta^1 \vee \Delta^1 \to \Delta^1 \vee \Delta^3 \vee \Delta^2$
defined by collapsing the first edge and including the remaining necklace as indicated.}
\label{fig:necklace}
\end{figure}
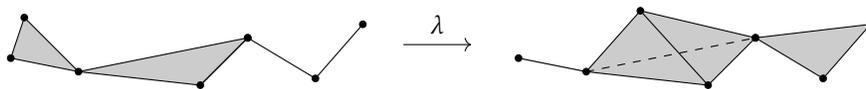
Following Dugger and Spivak, we define $\hldef{\Nec}$ to be the (non-full) subcategory of $\sSet$ whose objects are necklaces and whose morphisms are maps of simplicial sets $f \colon \Delta^{n_1}\vee \cdots \vee \Delta^{n_k} \to \Delta^{m_1}\vee \cdots \vee \Delta^{m_l}$ that preserve the minimal and maximal elements.

We can now state the formula for $\Lseg$ in terms of necklaces.
For the sake of simplicity, we state the formula here only for the case of $1$-simplices $\Lseg(X)_1$.
This suffices to determine $\Lseg(X)_n$ for all $n$ by the Segal condition.
\begin{thmA}\label{thmA:necklace-formula}
    For every simplicial space $X \in \PSh(\simp)$ 
    there is a canonical equivalence:
    \begin{align*}
    \Lseg(X)_1 
    & \simeq \colim_{N \in \Nec^\op} \Map_{\PSh(\simp)}(N, X) \\
    & \simeq 
    \colim_{\Delta^{n_1}\vee \cdots \vee \Delta^{n_r} \in \Nec^\op} X_{n_1} \times_{X_0} \cdots \times_{X_0} X_{n_r} 
    \end{align*} 
\end{thmA} 
This indeed generalises the formula for the homotopy category of a simplicial set $\mrm{ho}(X)$ mentioned above, as we shall see in \cref{ex:ho-of-X}.
\cref{thmA:necklace-formula} can also be deduced from the results of Dugger--Spivak by passing through the various model categories for \categories{},
but we will instead give a ``synthetic'' proof, as we believe it to be insightful.
A formula for monoidification analogous to \cref{thmA:necklace-formula} was obtained in \cite{yuan}.

\subsubsection{Application: right fibrations}
Our first application is to the notion of right fibrations of simplicial spaces in the sense of Rezk (see \cite[Remark 3.1]{Rasekh17}). 
Using the Segalification formula we show that right fibrations of simplicial spaces are invariant under $\bbL_{\CSS}$-equivalences, i.e.~if $f \colon X \to Y$ is a map of simplicial spaces such that $\Lcss(f)$ is an equivalence, then base change along $f$ induces an equivalence $f^\ast \colon \PSh(\simp)_{/Y}^{\rm r-fib} \simeq \PSh(\simp)_{/Y}^{\rm r-fib}$.
This implies that right fibrations over an arbitrary simplicial space $X$ model presheaves on the associated \category{} $\cat{X}$:

\begin{corA}[Rasekh]
    For any simplicial space $X$ the functor $\cat{-}$ induces an equivalence
    \[
        \PSh(\Delta)_{/X}^{\rm r-fib} 
        \xrightarrow[\simeq]{\cat{-}}
        \Catover{\cat{X}}^{\rm r-fib}
        \simeq
        \PSh(\cat{X})
    \] 
\end{corA}

A model-categorical version of this result was previously proven by Rasekh \cite[Theorem 4.18 and 5.1]{Rasekh17}.
Our proof has the advantage of being ``synthetic'' and also significantly shorter.
An alternative formulation of this corollary is to say that (the nerve of) the universal right fibration 
$\calS_*^\op \to \calS^\op$
classifies right fibrations of arbitrary simplicial spaces.%
\footnote{
    Note that the difficult part of this statement is the existence of a universal right fibration of simplicial spaces.
    Once existence is shown, it follows formally that the universal right fibration for simplicial space must agree with the one for complete Segal spaces.
}

We shall now use the above result to give a formula for $\cat{-} \colon \PSh(\simp) \to \Cat$.
Let us write $\simp_{\max} \subseteq \simp$ for the wide subcategory spanned by morphisms which preserve the maximal element,
and given a simplicial space $X \colon \Dop \to \calS$ let us write $p_X \colon \simp_{/X} \to \simp$ for the associated right fibration.

\begin{corA}
    For every simplicial space $X \in \PSh(\simp)$,
    the last vertex functor $\simp_{/X} \to \cat{X}$ 
    induces an equivalence of \categories{}
    \[\simp_{/X}[W_X^{-1}] \simeq \cat{X},\]
    where $W_X \coloneq p_X^{-1}(\simp_{\max}) \subseteq \simp_{/X}$.
\end{corA}

In the case that $X$ is level-wise discrete, i.e.~a simplicial set, this recovers a result of Stevenson \cite[Theorem 3]{stevenson} who attributes it to Joyal \cite[\S13.6]{Joyal-quasi}.
A synthetic proof for the case that $X$ is the nerve of an \category{} was given by Haugseng \cite[Proposition 2.12]{haugseng-coend}, and extended to arbitrary $X$ by Hebestreit--Steinebrunner \cite[Corollary 3.8]{hebestreit-steinbrunner}, motivated by the present paper.

\subsubsection{Application: $(\infty,n)$-categories}

The $\infty$-category of $(\infty,n)$-categories admits many equivalent descriptions including
Rezk's complete Segal $\Theta_n$-spaces \cite{rezk2010cartesian} and Barwick's complete $n$-fold Segal spaces \cite{Barwick-nfold}.
These were shown to be equivalent by Barwick--Schommer-Pries \cite{barwick2011unicity} and later, using different techniques, also by Bergner--Rezk \cite{bergner2020comparison} and Haugseng \cite{haugseng2018equivalence}.
We shall now present an application of our main result to $n$-fold Segal spaces.

We say that an $n$-fold simplicial space $X \colon (\Dop)^{\times n} \to \calS$ is \hldef{reduced} if each of the $(n-k-1)$-fold simplicial spaces $X_{m_1,\dots,m_k,0,\bullet,\dots, \bullet}$ is constant.
We write $\hldef{\PSh^r(\simp^{\times n})} \subseteq \PSh(\simp^{\times n})$ for the full subcategory of reduced $n$-fold simplicial spaces
and let $\hldef{\Seg_{\Dop}^{n\rm-fold}} \subseteq \PSh^r(\simp^{\times n})$
denote the full subcategory spanned by the \hldef{$n$-fold Segal spaces}, i.e.~those that satisfy the Segal condition in each coordinate.
This inclusion $\Seg_{\Dop}^{n\rm-fold} \hookrightarrow \PSh^r(\simp^{\times n})$ admits a left adjoint and we give a formula for it:
\begin{thmA}
    The left adjoint $\bbL\colon \PSh^r(\simp^{\times n}) \to \Seg_{\Dop}^{n\rm-fold}$ may be computed as 
    $\bbL = \bbL_n \circ \dots \circ \bbL_1$
    where $\bbL_j\colon \PSh(\simp^{\times n}) \to \PSh(\simp^{\times n})$ denotes the endofunctor that Segalifies the $j$th coordinate:
    \begin{align*}
    (\bbL_j X)_{m_1, \dots, m_{j-1}, 1, m_{j+1}, \dots, m_n} 
    & \simeq 
    \colim_{\Delta^{n_1}\vee \cdots \vee \Delta^{n_r} \in \Nec^\op} 
    X_{m_1, \dots, n_1, \dots, m_n} 
    \underset{X_{m_1, \dots, 0, \dots, m_n}}{\times}
    \cdots
    \underset{X_{m_1, \dots, 0, \dots, m_n}}{\times}
    X_{m_1, \dots, n_r, \dots, m_n} .
    \end{align*} 
\end{thmA}

Note that the order of the $\bbL_j$ is crucial: we need to first Segalify $1$-morphisms, then $2$-morphisms, etc.
If one were to apply $\bbL_2$ and then $\bbL_1$ the result would not necessarily satisfy the Segal condition in the second coordinate.

\subsubsection{Application: the Boardman--Vogt tensor product}
One of the many great achievements of Lurie's book project on Higher Algebra \cite{HA} is the construction of a homotopy coherent symmetric monoidal structure $\otimes_{\rm Lurie}$ on the \category{} of \operads{} $\Op$, generalizing the Boardman-Vogt tensor product \cite[\S II.3]{boardman2006homotopy}. 
The defining property of $\calO \otimes_{\rm Lurie} \calP$ is that algebras over it are ``$\calO$-algebras in $\calP$-algebras'', i.e.~that for any symmetric monoidal \category{} $\calC$ there is an equivalence
\[
    \Alg_{\calO \otimes_{\rm Lurie} \calP}(\calC) \simeq 
    \Alg_\calO(\Alg_\calP(\calC)).
\]
The construction of $\otimes_{\rm Lurie}$ is quite intricate as it involves a delicate mix of quasicategorical and model categorical techniques.
We shall now describe how the necklace formula can be used to justify a simpler, alternative approach to the tensor product of \operads{}.
\begin{thmA}\label{thmA:BV-tensor}
    The tensor product of symmetric monoidal \categories{} uniquely restricts to a tensor product $\otimes_{\rm BV}$ on $\Op$ such that the envelope
    $\Env \colon (\Op, \otimes_{\rm BV}) \to (\SM, \otimes)$ is a symmetric monoidal functor.
    Moreover, for any two \operads{} $\calO$ and $\calP$ there is a canonical equivalence $\calO \otimes_{\rm BV} \calP \simeq \calO \otimes_\Lurie \calP$.
\end{thmA}

\begin{rem*}
Note that \cref{thmA:BV-tensor} does \textit{not} claim that $(\Op,\otimes_{\rm BV})$ and $(\Op,\otimes_{\Lurie})$ are equivalent as symmetric monoidal \categories{}.
It does, however, reduce the question to whether the envelope can be constructed as a symmetric monoidal functor $(\Op,\otimes_{\Lurie}) \dashrightarrow (\SM,\otimes)$.
This is not entirely clear since the higher coherence data (associator, braiding, etc\dots) of Lurie's tensor product $\otimes_{\Lurie}$ is tricky to access.%
\footnote{
Lurie does give a model categorical construction of a (non-symmetric) monoidal structure which does have a recognizable universal property as a certain localization of $\infty$-categories over $\Fin_\ast$. The symmetric monoidal structure however is constructed by hand, and apart from the binary operation the relation between the two is not commented on.} 
Moreover, the authors are unaware of any applications in which the specific coherence of Lurie's construction plays a role.
\end{rem*}

A novel consequence of \cref{thmA:BV-tensor} is that, at least in principle, the Boardman-Vogt tensor product is only as difficult to compute as necklace colimits.
The resulting formula will be easiest to express in the language of symmetric sequences.

\subsubsection{Outlook: symmetric sequences}
A \hldef{symmetric sequence} is a presheaf on the category of finite sets and bijections.
The disjoint union $\hldef{\amalg}$ and the product $\hldef{\times}$ of finite sets extend via Day convolution to symmetric monoidal structures on symmetric sequences $\hldef{\SymSeq} \coloneq \PSh(\Fin^\cong)$ which we respectively denote by $\hldef{\otimes}$ and $\hldef{\boxtimes}$.
Since $(\SymSeq,\otimes)$ is the free presentably symmetric monoidal \category{} on a single generator $\underline{\unit} \in \SymSeq$, evaluation induces an equivalence
\[
    \ev_{\underline{\unit}}\colon \Fun_{\CAlg(\PrL)}\left((\SymSeq,\otimes), (\SymSeq,\otimes)\right) \xrightarrow{\simeq} \SymSeq
\]
which endows $\SymSeq$ with yet another (non-symmetric) monoidal structure $\hldef{\circ}$ coming from the composition of endofunctors on the left side.
It is generally expected that $1$-colored (non-complete) \operads{} are equivalent to associative algebras for $\circ$ in $\SymSeq$,
and for a different definition of $\circ$ this was shown in \cite{Rune-symmetric-sequences}.
Given two such algebras $\calO, \calP \in \Alg_{\mathbb{E}_1}(\SymSeq, \circ)$ the necklace formula in this setting gives:
    \[ 
    \calO \otimes_{\rm BV} \calP \simeq 
    \colim_{\Delta^{n_1}\vee \cdots \vee \Delta^{n_r} \in \Nec^\op}  
    \left(\calO^{\circ n_1} \boxtimes \calP^{\circ n_1} \right) \circ  \cdots \circ \left(\calO^{\circ n_r} \boxtimes \calP^{\circ n_r}\right)
    \]
A formal proof of this does not fit within the scope of the present paper, 
as it relies on a good interface between \operads{} and symmetric sequences.
We plan to provide such an interface and provide a full proof of this in forthcoming work where we revisit the composition product of symmetric sequences from the perspective of equifibered theory.

\subsection*{Acknowledgments}

We would like to thank Rune Haugseng for helpful comments on an earlier draft, Manuel Krannich for pointing out an issue in an earlier proof of \cref{thmA:BV-tensor}, Maxime Ramzi for useful conversations related to this paper, and the anonymous referee for a detailed and helpful report.

The first author would like to thank the Hausdorff Research Institute for Mathematics for their hospitality during the  fall 2022 trimester program, funded by the Deutsche Forschungsgemeinschaft (DFG, German Research Foundation) under Germany's Excellence Strategy – EXC-2047/1 – 390685813.
The second author is supported by the ERC grant no.~772960, and would like to thank the Copenhagen Centre for Geometry and Topology (DNRF151) for their hospitality.

\section{Segalification}
\subsection{Necklace contexts}

Let us fix an \category{} $\calC$, which will be $\Dop$ in later sections.
In this section, we study the general problem of approximating a reflective localization functor $\bbL \colon \PSh(\calC) \to \PSh(\calC)$ in the sense of \cite[Proposition 5.2.7.4]{HTT} using a suitable auxiliary subcategory of $\PSh(\calC)$.

\begin{defn}
    A presheaf $X \in \PSh(\calC)$ is called \hldef{$\bbL$-local} if the unit map $X \to \bbL(X)$ is an equivalence, i.e.~if $X$ lies in the essential image of $\bbL$.
    A morphism of presheaves $f\colon Y \to Z \in \PSh(\calC)$ is called an \hldef{$\bbL$-local equivalence} if $\bbL(f)$ is an equivalence.
\end{defn}

\begin{defn}\label{defn:necklace-context}
    A \hldef{necklace context} is a triple $(\calC,\bbL,\calN)$ where $\calC$ and $\bbL$ are as above
    and $\calN \subseteq \PSh(\calC)$ is a full subcategory such that
    \begin{enumerate}
        \item 
        $\Yo_c \coloneq \Map_\calC(-, c)$ is $\bbL$-local for all $c\in \calC$.
        \item 
        $\Yo_c \in \calN$ for all $c \in \calC$
        and
        $\bbL(N)$ is representable for all $N \in \calN$.
    \end{enumerate}
\end{defn}

\begin{example}
    A compatible necklace category for a pair $(\calC,\bbL)$ as in \cref{defn:necklace-context} exists if and only if the first condition holds. 
    In this case, the minimal possible necklace category is given by the representable presheaves $\calN_{\min} \coloneq \Yo(\calC) \subseteq \PSh(\calC)$ and the maximal choice is given by $\calN_{\max}\coloneq \bbL^{-1}(\calC) \subseteq \PSh(\calC)$, namely all $X \in \PSh(\calC)$ such that $\bbL(X) \in \Yo(\calC)$. 
    The full subcategory $\calN_{\rm sub} \subseteq \PSh(\calC)$ spanned by all subobjects $A \subseteq \Yo_c$ such that $\bbL(A) \simeq \Yo_c$ is another possible choice.
\end{example}

Given a necklace context $(\calC,\bbL,\calN)$, the Yoneda embedding $\Yo \colon \calC \hookrightarrow \PSh(\calC)$ lands in $\calN \subseteq \PSh(\calC)$ and thus gives rise to an adjunction
\[\begin{tikzcd}
	{\hldef{\ell} \coloneq \bbL_{|\calN} \colon \calN} && {\calC  :\! \Yo =:\! \hldef{i}}
	\arrow[""{name=0, anchor=center, inner sep=0}, shift left=2, hook', from=1-3, to=1-1]
	\arrow[""{name=1, anchor=center, inner sep=0}, shift left=2, from=1-1, to=1-3]
	\arrow["\dashv"{anchor=center, rotate=-90}, draw=none, from=1, to=0]
\end{tikzcd}\]
Passing to to presheaves we obtain a quadruple adjunction
\[\begin{tikzcd}
	{\PSh(\calC)} &&& {\PSh(\calN).}
	\arrow[""{name=0, anchor=center, inner sep=0}, "{i_! = \ell^*}"{description}, shift left=3, shorten <=8pt, shorten >=8pt, hook, from=1-1, to=1-4]
	\arrow[""{name=1, anchor=center, inner sep=0}, "{i^\ast = \ell_\ast}"{description}, shift left=3, shorten <=8pt, shorten >=8pt, from=1-4, to=1-1]
	\arrow[""{name=2, anchor=center, inner sep=0}, "{\ell_!}"{description}, shift right=1, curve={height=18pt}, from=1-4, to=1-1]
	\arrow[""{name=3, anchor=center, inner sep=0}, "{i_\ast}"{description}, shift right=1, curve={height=18pt}, hook, from=1-1, to=1-4]
	\arrow["\dashv"{anchor=center, rotate=-90}, draw=none, from=2, to=0]
	\arrow["\dashv"{anchor=center, rotate=-90}, draw=none, from=0, to=1]
	\arrow["\dashv"{anchor=center, rotate=-90}, draw=none, from=1, to=3]
\end{tikzcd}\]

\begin{lem}\label{lem:alpha-and-beta}
    The natural transformation $\hldef{\beta \colon i^\ast \to \ell_!}$
    defined by
        \[
            \left(
            \beta \colon i^\ast \xrightarrow{i^\ast \circ u} 
            i^\ast \circ (\ell^\ast \circ \ell_!) \simeq 
            (\ell \circ i)^\ast \circ \ell_! \simeq
            \ell_!
            \right)
            \in \Fun( \PSh(\calN), \PSh(\calC)).
        \] 
    is an $\bbL$-local equivalence.
\end{lem}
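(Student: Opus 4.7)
The plan is to reduce via cocontinuity to checking the claim on representables, where $\beta$ should recover the unit of the localization $\bbL$. First, I would observe that both $\bbL \circ i^\ast$ and $\bbL \circ \ell_!$ preserve colimits: indeed $\ell_!$ is a left adjoint, $i^\ast$ sits in a triple adjunction $i_! \dashv i^\ast \dashv i_*$ and so preserves all colimits (and limits), and $\bbL$ is a left adjoint. Since $\PSh(\calN)$ is freely generated under colimits by its representables $\Yo_N$ for $N \in \calN$, in order to show that $\bbL(\beta)$ is a natural equivalence it suffices to check that $\bbL(\beta_{\Yo_N})$ is an equivalence for every $N \in \calN$.

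The second step is a direct computation on these representables. By the Yoneda lemma, $i^\ast(\Yo_N)(c) = \Map_{\PSh(\calC)}(\Yo_c, N) \simeq N(c)$, so $i^\ast(\Yo_N) \simeq N$ viewed as an object of $\PSh(\calC)$ via the inclusion $\calN \subseteq \PSh(\calC)$. On the other hand, $\ell_!$ is the left Kan extension along the Yoneda embedding of $\calN$ of the composite $\calN \xrightarrow{\ell} \calC \xrightarrow{\Yo} \PSh(\calC)$, so $\ell_!(\Yo_N) \simeq \Yo_{\ell(N)} \simeq \bbL(N)$, where the last equivalence uses condition (2) of the necklace context (namely that $\bbL(N)$ is representable). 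Under these identifications, $\beta_{\Yo_N}$ will turn out to be the localization unit $N \to \bbL(N)$, which is an $\bbL$-local equivalence by definition of $\bbL$.

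The one substantive point is this last identification of $\beta_{\Yo_N}$. The definition of $\beta$ is obtained by applying $i^\ast$ to the unit $\Yo_N \to \ell^\ast \ell_! \Yo_N$ and then identifying $i^\ast \circ \ell^\ast \simeq (\ell \circ i)^\ast \simeq \mathrm{id}$ using that the counit $\ell \circ i \to \mathrm{id}_\calC$ is an equivalence (since $i$ is fully faithful and $\ell \dashv i$). Unwinding on a fixed $c \in \calC$: the unit $\Yo_N \to \ell^\ast \Yo_{\ell(N)}$ evaluated at $\Yo_c$ becomes the map $N(c) = \Map_\calN(\Yo_c, N) \to \Map_\calC(\ell(\Yo_c), \ell(N)) = \bbL(N)(c)$ induced by $\ell$, which is exactly the component at $c$ of the unit of $\bbL$ restricted to $\calN$. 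This is essentially formal adjunction bookkeeping, and completes the reduction to the defining property of the localization.
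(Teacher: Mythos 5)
Your proof is correct and follows essentially the same route as the paper's: reduce to representables using that (after corestricting to $\bbL$-local objects) both sides are colimit-preserving functors out of $\PSh(\calN)$, then identify $\beta$ on a representable $\Yo_N$ with the localization unit $N \to \bbL(N)$ via the observation that the adjunction $\ell_! \dashv \ell^\ast$ restricts to $\ell \dashv i$ on representables. The only cosmetic difference is that the paper states the left-adjoint/cocontinuity step explicitly as landing in $\PSh_{\bbL\text{-loc}}(\calC)$, which is the precise form of your ``$\bbL$ is a left adjoint'' remark.
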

\begin{proof}
    The source and target of $\bbL(\beta) \colon \bbL i^\ast \to \bbL \ell_!$ are both left adjoints,
    when thought of as functors $\PSh(\calN) \to \PSh_{\bbL-\mrm{loc}}(\calC)$ so it suffices to check that $i^\ast (u) \colon i^\ast \to i^\ast (\ell^\ast \ell_!)$ evaluates to an $\bbL$-local equivalence at representable presheaves. 
    To see this, note the adjunction $\ell_! \dashv \ell^\ast$ agrees with $\ell \dashv i$ on representables and thus
    $u|_\calN$ is the unit $\Id_\calN \to i \ell = \bbL$.
    Finally, we apply $i^*\colon \calN \subset \PSh(\calN) \to \PSh(\calC)$, which is simply the inclusion $\calN \subset \PSh(\calC)$.
    Therefore the restriction of $i^*(u)$ to representables is the canonical map $N \to \bbL(N)$ for all $N \in \calN$.
    \qedhere
\end{proof}

\begin{defn}
    Given a necklace category $(\calC,\bbL,\calN)$ we define
    \[
       \hldef{Q_\calN}\colon  \PSh(\calC) \xrightarrow{i_*} \PSh(\calN) \xrightarrow{\ell_!} \PSh(\calC).
    \]
    This functor receives a canonical natural transformation from the identity 
    \[
        \hldef{\lambda}\colon  \Id_{\PSh(\calC)} \xleftarrow{\simeq} i^* \circ i_* 
        \xrightarrow{\beta \circ i_\ast} 
        \ell_! \circ i_* = Q_\calN.
    \]
\end{defn}

\begin{rem}\label{rem:formula-for-Q}
    The functor $i_\ast \colon \PSh(\calC) \to \PSh(\calN)$ may be computed as
    $(i_\ast X)(N) \simeq \Map_{\PSh(\calC)}(N, X)$.
    By the pointwise formula for left Kan extensions thus have
    \[
        Q_\calN(X)(c) = 
        \colim_{(N, \ell(N) \leftarrow c) \in (\calN \times_{\calC} \calC_{c/})^\op} \Map_{\PSh(\calC)}(N, X).
    \]
\end{rem}

Note that by \cref{lem:alpha-and-beta}, $\lambda\colon \id \to Q_\calN(X)$ is $\bbL$-local and thus
the unit transformation $\id \to \bbL$ factors through $\lambda$.
The resulting natural transformation $Q_\calN \to \bbL$ is then $\bbL$-local by cancellation. 
We thus conclude:

\begin{cor}\label{prop:lambda-L-local}
    There exists a canonical $\bbL$-local natural transformation $Q_\calN \to \bbL$ such that for any $X \in \PSh(\calC)$ the map $Q_\calN(X) \to \bbL(X)$ is an equivalence if and only if $Q_\calN(X)$ is $\bbL$-local.
\end{cor}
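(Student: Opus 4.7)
The plan is to extract the corollary directly from Lemma \ref{lem:alpha-and-beta} together with the universal property of the reflective localization $\bbL$. First I would unpack $\lambda$: by construction it equals $\beta$ whiskered with $i_\ast$ (after the tautological identification $i^\ast i_\ast \simeq \id$), so pointwise $\lambda_X$ is the component $\beta_{i_\ast X}\colon i^\ast i_\ast X \to \ell_! i_\ast X$. Lemma \ref{lem:alpha-and-beta} asserts that $\beta$ is $\bbL$-local, hence every such component is an $\bbL$-local equivalence, and therefore $\lambda\colon \id_{\PSh(\calC)} \to Q_\calN$ is an $\bbL$-local natural transformation.

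Next I would invoke the general fact that, for a reflective localization, $\bbL$-local equivalences are left orthogonal to $\bbL$-local objects. Since the unit $\eta_X\colon X \to \bbL(X)$ lands in the $\bbL$-local object $\bbL(X)$ and $\lambda_X\colon X \to Q_\calN(X)$ is an $\bbL$-local equivalence, there is a unique factorization $Q_\calN(X) \to \bbL(X)$. The uniqueness upgrades the pointwise factorizations to a genuine natural transformation $Q_\calN \to \bbL$ (for example by repeating the argument in the functor category $\Fun(\PSh(\calC),\PSh(\calC))$, where the relevant orthogonality is inherited objectwise). Two-out-of-three applied to the commuting triangle $\id \xrightarrow{\lambda} Q_\calN \to \bbL$ and the fact that both $\lambda$ and $\eta$ are $\bbL$-local then yield that $Q_\calN \to \bbL$ is $\bbL$-local as well.

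Finally, the ``if and only if'' is a formal consequence: $\bbL(X)$ is $\bbL$-local by definition, so if $Q_\calN(X) \to \bbL(X)$ is an equivalence then $Q_\calN(X)$ is $\bbL$-local; conversely, if $Q_\calN(X)$ is $\bbL$-local then $Q_\calN(X) \to \bbL(X)$ is an $\bbL$-local equivalence between $\bbL$-local objects, hence an equivalence. I do not anticipate a serious obstacle here, since all the work has been done in Lemma \ref{lem:alpha-and-beta}; the mildly delicate point is promoting the pointwise factorization to a natural transformation, but this is handled uniformly by the orthogonality between $\bbL$-local equivalences and $\bbL$-local objects.
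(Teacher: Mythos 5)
Your argument is correct and follows essentially the same route as the paper: the text preceding the corollary likewise observes that $\lambda$ is $\bbL$-local by \cref{lem:alpha-and-beta}, factors the unit $\id \to \bbL$ through $\lambda$ using the orthogonality of $\bbL$-local equivalences against $\bbL$-local objects, and deduces that $Q_\calN \to \bbL$ is $\bbL$-local by cancellation, after which the ``if and only if'' is formal. No gaps.
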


\subsection{Segalification}

We now specialize to the setting of Segal spaces, where the localization $\Lseg$ is defined as the left adjoint to the full inclusion
\[
    \Seg_{\Dop}(\calS) \hookrightarrow \Fun(\Dop, \calS)
\]
of those simplicial spaces $X_\bullet$ for which the map $X_n \to X_1 \times_{X_0} \dots \times_{X_0} X_1$ is an equivalence.
We will choose a necklace category and show that $Q_\calN \simeq \Lseg$.

\begin{rem}\label{rem:Dugger-Spivak}
The formula we will arrive at for $\Lseg$ is closely related to the work of Dugger--Spivak \cite{Dugger-Spivak},
who construct a functor
\[
    \mfr{C}^{\rm nec}\colon \mrm{sSet} \to \mrm{sCat}
\]
from the category of simplicial sets to the category of simplicial categories, which they show to be weakly equivalent to the left adjoint $\mfr{C}$ of the coherent nerve.
This gives a formula for the mapping spaces in $\mfr{C}(Z_\bullet)$ (as a colimit over the necklace category) when $Z_\bullet$ is a simplicial set in the Joyal model structure.
The case of a simplicial space follows by using the left Quillen equivalence $t_!\colon \mrm{ssSet} \to \mrm{sSet}$ constructed by Joyal--Tierney \cite{JT06}.
\end{rem}

\subsubsection{Segal spaces}

Let us recall the category of necklaces, introduced by Dugger and Spivak \cite{Dugger-Spivak}.

\begin{defn}
    The \hldef{concatenation $A \vee B$} of two bi-pointed simplicial sets $(A, a_{\min}, a_{\max})$ and $(B, b_{\min}, b_{\max})$ is defined as the pushout
    \[\begin{tikzcd}
    	{\Delta^0} & B \\
    	A & {\hldef{A \vee B},}
    	\arrow["{a_{\max}}"', from=1-1, to=2-1]
    	\arrow[from=1-2, to=2-2]
    	\arrow[from=2-1, to=2-2]
    	\arrow["{b_{\min}}", from=1-1, to=1-2]
    	\arrow["\lrcorner"{anchor=center, pos=0.125, rotate=180}, draw=none, from=2-2, to=1-1]
    \end{tikzcd}\]
    which we point as $(A \vee B, a_{\min}, b_{\max})$.
    This defines a (non-symmetric) monoidal structure on the category of bi-pointed simplicial sets.
\end{defn}

\begin{defn}
    A \hldef{necklace} is a bi-pointed simplicial set obtained by concatenating simplices $(\Delta^n, 0, n)$,
    i.e.~it is of the form $N = \Delta^{n_1} \vee \dots \vee \Delta^{n_k}$.
    We let $\hldef{\Nec}$ denote the category whose objects are necklaces and whose morphisms are maps of bi-pointed simplicial sets.
\end{defn}

While the category $\Nec$ will play the central role in the Segalification formula, we will need a slightly bigger category to set up the necklace context.
\begin{defn}
    Let $\hldef{\calN} \subset \PSh(\simp)$ denote the essential image of the faithful functor $\Nec \to \PSh(\simp)$.
\end{defn}

\begin{lem}\label{lem:necklace-context}
    Segalification $\Lseg \colon \PSh(\simp) \to \Seg_{\Dop}(\calS)$ restricts to a functor
    $\Lseg|_{\calN} \colon \calN \to \simp$.
    In particular, the triple $(\simp, \Lseg, \calN)$ is a necklace context.
\end{lem}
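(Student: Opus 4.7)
The plan is to verify both conditions of \cref{defn:necklace-context} for the triple $(\simp, \bbL_\seg, \calN)$. The claim that $\Lseg$ restricts to a functor $\calN \to \simp$ is equivalent to the statement that $\Lseg(N)$ is representable for every necklace $N$, together with the observation that every representable presheaf $\Delta^n = \Yo_{[n]}$ is itself a necklace (with $k=1$), which handles the inclusion of representables in $\calN$ and reduces everything to a concrete computation of $\Lseg(N)$.

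For condition (1), I would observe that each $\Delta^n \in \PSh(\simp)$ is already a Segal space. Indeed, $\Delta^n$ is a discrete simplicial space, equal to the levelwise-discrete nerve of the poset $[n]$, and for any category $[n]$ the Segal map $\Hom_\simp([k],[n]) \to \Hom_\simp([1],[n]) \times_{\Hom_\simp([0],[n])} \cdots \times_{\Hom_\simp([0],[n])} \Hom_\simp([1],[n])$ is a bijection simply by the definition of composable strings in a category. In particular each $\Delta^n$ is $\Lseg$-local.

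The substantive content is condition (2), that $\Lseg(N)$ is representable for every $N \in \calN$. I claim that for $N = \Delta^{n_1} \vee \cdots \vee \Delta^{n_k}$ the natural collapse map $\pi_N \colon N \to \Delta^{n_1+\cdots+n_k}$ (sending the $i$th bead onto the face spanned by the vertices $n_1+\cdots+n_{i-1},\ldots,n_1+\cdots+n_i$) is an $\Lseg$-local equivalence. To see this I would test against an arbitrary Segal space $Y$. By definition $N$ is the iterated pushout $\Delta^{n_1} \cup_{\Delta^0} \cdots \cup_{\Delta^0} \Delta^{n_k}$ of bi-pointed simplicial sets, and this pushout is preserved by the inclusion into $\PSh(\simp)$, so $\Map_{\PSh(\simp)}(N,Y) \simeq Y_{n_1} \times_{Y_0} \cdots \times_{Y_0} Y_{n_k}$. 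Under this identification, the map induced by $\pi_N$ on mapping spaces out of a Segal space $Y$ is exactly the Segal map $Y_{n_1+\cdots+n_k} \to Y_{n_1} \times_{Y_0} \cdots \times_{Y_0} Y_{n_k}$, which is an equivalence by hypothesis. Hence $\pi_N$ is an $\Lseg$-local equivalence and $\Lseg(N) \simeq \Delta^{n_1+\cdots+n_k}$, which is representable.

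There is no real obstacle here beyond unwinding definitions; the only point that deserves care is the passage between $N$ as a bi-pointed simplicial set and $N$ as a simplicial space, which is a matter of checking that the pushout in bi-pointed simplicial sets defining $N$ is computed as the corresponding pushout in $\PSh(\simp)$, and then using that $\Map_{\PSh(\simp)}(-,Y)$ turns this pushout into a fibre product against $Y_0 = \Map_{\PSh(\simp)}(\Delta^0, Y)$.
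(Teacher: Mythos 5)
Your overall strategy is sound and is really the paper's argument seen from the other side of the adjunction: where you test the inclusion $N \hookrightarrow \Delta^{n}$ against an arbitrary Segal space $Y$, the paper shows directly that this inclusion is an $\Lseg$-local equivalence. Your treatment of condition (1) (each $\Delta^n$ is a levelwise discrete Segal space) and of the identification $\Map_{\PSh(\simp)}(N,Y) \simeq Y_{n_1} \times_{Y_0} \cdots \times_{Y_0} Y_{n_k}$ is correct, and you rightly flag that the pushouts defining $N$ as a bi-pointed simplicial set are preserved in $\PSh(\simp)$ (they are pushouts along levelwise monomorphisms).

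There is, however, one step you assert that is precisely the nontrivial content of the paper's proof. You say the induced map $Y_{n_1+\cdots+n_k} \to Y_{n_1} \times_{Y_0} \cdots \times_{Y_0} Y_{n_k}$ ``is an equivalence by hypothesis,'' but the Segal condition as defined in the paper only says this for the maximal partition into $1$-simplices, i.e.\ for $Y_n \to Y_1 \times_{Y_0} \cdots \times_{Y_0} Y_1$. The generalized Segal map for an arbitrary partition requires an extra argument: compose it with the fibre product of the edgewise Segal maps $Y_{n_i} \to Y_1 \times_{Y_0} \cdots \times_{Y_0} Y_1$ to recover the maximal Segal map, and conclude by two-out-of-three. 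This is exactly what the paper does, phrased on the presheaf side: it factors the spine inclusion through the necklace, $\Delta^{\{0,1\}} \vee \cdots \vee \Delta^{\{n-1,n\}} \hookrightarrow N \hookrightarrow \Delta^n$, notes the composite is a Segal equivalence by definition, handles the first map by induction on $n$ (using that $\Lseg$ preserves colimits), and cancels. Your proof becomes complete once you insert this short two-out-of-three argument in place of ``by hypothesis''; as written, the one step carrying the content of the lemma is assumed rather than proved.
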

\begin{proof}
    We will show by induction on $n$ that the inclusion $\Delta^{\{0,\dots,n_1\}} \vee \dots \vee \Delta^{\{n_k,\dots,n\}} \hookrightarrow \Delta^n$ is a Segal equivalence, thereby proving the claim.
    Consider the nested inclusion
    \[\Delta^{\{0,1\}}\vee \dots \vee \Delta^{\{n-1,n\}} \hookrightarrow \Delta^{\{0,\dots,n_1\}} \vee \dots \vee \Delta^{\{n_k,\dots,n\}} \hookrightarrow \Delta^n.\]
    The composite is a Segal equivalence by definition, 
    and since $\bbL$ preserves colimits and $n_{j+1}-n_j<n$ the first map is a Segal equivalence by the induction hypothesis.
    The second map is therefore a Segal equivalence by cancellation.
\end{proof}

\begin{rem}\label{rem:L-faithful}
    Note that the map $N \to \Lseg(N) = \Delta^n$ is a monomorphism for each necklace $N$.
    In particular, for any two necklaces $N, M$, the map
    \[
        \Map_\calN(N, M) \too \Map_{\PSh(\simp)}(\Lseg(N), \Lseg(M)) \simeq \Map_{\simp}([n], [m])
    \]
    is a monomorphism, i.e. $\bbL\colon \calN \to \simp$ is faithful.
\end{rem}

\subsubsection{The Segal condition}

Since $(\simp, \Lseg, \calN)$ is a necklace context we have by \cref{lem:necklace-context} a functor
\[
    Q\colon \PSh(\simp) \xrightarrow{i_*} \PSh(\calN) \xrightarrow{\ell_!} \PSh(\simp).
\]
By \cref{prop:lambda-L-local} this comes with an $\Lseg$-local natural transformation $Q \to \Lseg$.
We may compute the functor $Q(-)$ using \cref{rem:formula-for-Q} as:
    \[
        Q(X)_n = 
        \colim_{(N, l(N) \leftarrow [n]) \in (\calN \times_{\simp} \simp_{[n]/})^\op} \Map_{\PSh(\simp)}(N, X).
    \]
Below we show that $Q(X)$ is always a Segal space,
and thus by \cref{prop:lambda-L-local} that $Q  \simeq \Lseg$.

\begin{defn}\label{defn:necklace-decomposition}
    For any necklace $N$ we let $\hldef{\iota_N} \colon \Delta^1 \to \Lseg(N)$ denote the unique map that preserves the extrema.
    Given $[n] \in \simp$ we define the functor
    \[
        \xJoin \colon \prod_{i=1}^n \Nec 
        \hookrightarrow \calN \times_{\simp} \simp_{[n]/}  ,
    \]
    by joining necklaces at their endpoints
    \[(M_1,\dots, M_n) \longmapsto \left(M_1 \vee \cdots \vee M_n,[n] \xrightarrow{\Lseg(\iota_1 \vee \cdots \vee \iota_n)} 
    \Lseg(\Lseg(M_1) \vee \dots \vee \Lseg(M_n)) 
    \simeq \Lseg(M_1 \vee \dots \vee M_n)
    \right). \]
\end{defn}

\begin{lem}\label{lem:necklace-decomposition}
    The functor $\xJoin$ is fully faithful and admits a right adjoint.
    In particular it is initial.
\end{lem}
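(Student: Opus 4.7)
The plan is to construct an explicit right adjoint $R$ to $\xJoin$; both full faithfulness and initiality will then follow formally. Given $(N, f\colon [n] \to \ell(N))$, the right adjoint I have in mind is
\[ R(N,f) \coloneq \bigl(N[f(0),f(1)],\, \dots,\, N[f(n-1),f(n)]\bigr), \]
where $N[a,b] \subseteq N$ denotes the full sub-simplicial set spanned by the vertices in the interval $[a,b] \subseteq \ell(N)$.

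The key geometric input is that every simplicial map $\phi\colon N \to N'$ between necklaces is weakly order-preserving on vertices: non-degenerate simplices of a necklace are confined to a single bead, so $\phi$ preserves the linear order within each bead, and the adjacency of beads through shared join points glues these local orderings into global weak monotonicity. First I would verify that $N[a,b]$ is itself a necklace---obtained by truncating the beads of $N$ that contain $a$ and $b$ while keeping the intermediate ones intact---canonically bi-pointed at $a$ and $b$, and that any simplicial map from a necklace $M$ whose extrema land at $a$ and $b$ automatically factors through $N[a,b]$.

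Given this, full faithfulness of $\xJoin$ is immediate. A morphism $\phi\colon \xJoin(M_\bullet) \to \xJoin(M'_\bullet)$ in the fibre product is a simplicial map $\bigvee_i M_i \to \bigvee_j M'_j$ satisfying $\ell(\phi) \circ h = h'$, and evaluating at each $i \in [n]$ forces $\phi$ to send the $i$th join point of the source to the $i$th join point of the target. The order-preserving principle then shows that $\phi$ restricts to maps $\phi_i\colon M_i \to M'_i$ preserving extrema, i.e.\ $\Nec$-morphisms; conversely any such tuple reassembles into a $\phi$ via the defining pushouts of $\bigvee$. Applied to a general $\phi\colon \xJoin(M_\bullet) \to (N, f)$, the same argument shows each $\phi_i$ factors uniquely through $N[f(i-1), f(i)]$, yielding the natural equivalence
\[ \Map\bigl(\xJoin(M_\bullet),\, (N, f)\bigr) \;\simeq\; \prod_{i=1}^n \Map_{\Nec}\bigl(M_i,\, N[f(i-1), f(i)]\bigr) \;=\; \Map\bigl(M_\bullet,\, R(N, f)\bigr). \]
The ``in particular'' clause is then formal: the counit $\xJoin(R(N,f)) \to (N,f)$, assembled bead-wise from the inclusions $N[f(i-1), f(i)] \hookrightarrow N$, exhibits $R(N,f)$ as terminal in the slice $\xJoin \downarrow (N,f)$, so $\xJoin$ is initial.

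The main obstacle is the combinatorial verification of the order-preserving principle and the resulting decomposition $\phi = \bigvee_i \phi_i$, together with the bookkeeping around degenerate cases---e.g.\ when $f(i-1) = f(i)$, so that $N[f(i-1), f(i)]$ collapses to a point, or when $f(i)$ coincides with an existing join point of $N$---to ensure $N[a,b]$ is manifestly a necklace in all cases. Once these facts are cleanly established, both the adjunction and the initiality follow purely formally.
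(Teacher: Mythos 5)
Your proposal is correct and follows essentially the same route as the paper: you construct the same right adjoint $(N,f) \mapsto (N \cap \Delta^{\{f(0),\dots,f(1)\}}, \dots, N \cap \Delta^{\{f(n-1),\dots,f(n)\}})$ and verify the same mapping-space identification, merely spelling out the order-preservation and bead-decomposition combinatorics that the paper dismisses as ``unwinding definitions.'' The deduction of initiality from the existence of a right adjoint is likewise the paper's argument.
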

\begin{proof}
    Fully-faithfulness follows by unwinding definitions.~We claim that a right adjoint to $J$ is given by the following
    \[J^R \colon (N,\alpha \colon [n] \to \Lseg(N)) \longmapsto (N_{\alpha(0),\alpha(1)}, \dots, N_{\alpha(n-1),\alpha(n)} ) \]
    where $N_{\alpha(j),\alpha(j+1)} \coloneq N \cap \Delta^{\{\alpha(j),\dots, \alpha(j+1)\}}$.
    To see this, note that a tuple of necklace morphisms
    \[(M_1,\dots,M_n) \to (N_{\alpha(0),\alpha(1)}, \dots ,N_{\alpha(n-1),\alpha(n)})
    = J^R(N, \alpha)
    \]
    is equivalent to a morphism
    $ 
    M_1 \vee \dots \vee M_n \to N_{\alpha(0),\alpha(1)} \vee \dots \vee N_{\alpha(n-1),\alpha(n)} \subset N
    $
    such that $M_i$ lands in $N_{\alpha(i-1), \alpha(i)}$.
    These can be identified with morphisms 
    $J(M_1,\dots,M_n) \to (N,\alpha)$ in $\calN \times_{\simp} \simp_{[n]/}$ and thus $J^R$ is indeed right adjoint to $J$.
\end{proof}

\begin{obs}\label{obs:rewriting-Q}
    The finality in \cref{lem:necklace-decomposition} implies that $Q(X)_n$ may be computed as:
    \begin{align*}
        Q(X)_n 
        &\simeq \colim_{(M_1,\dots, M_n) \in (\Nec^\op)^n} \Map_{\PSh(\simp)}(M_1 \vee \dots \vee M_n, X)\\
        &\simeq \colim_{(M_1,\dots, M_n) \in (\Nec^\op)^n} \Map_{\PSh(\simp)}(M_1, X) \underset{X_0}{\times} \dots \underset{X_0}{\times} 
        \Map_{\PSh(\simp)}(M_n, X)
    \end{align*}
    In particular for $n=0$ we just get $Q(X)_0 = X_0$.
    While this is a simplification of the general formula from \cref{rem:formula-for-Q}, it has the downside that the functoriality in $[n]$ is not clear in general.
    However, we can still see the functoriality in inert maps $\varphi\colon [m] \intto [n]$, as it is simply given by restricting to the $M_i$ that correspond to the image of $\varphi$.
    This functoriality will suffice to check the Segal condition.
\end{obs}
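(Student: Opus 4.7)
The plan is to derive all the claims of the observation directly from the cofinality statement in \cref{lem:necklace-decomposition} together with standard manipulations of colimits and mapping spaces.

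First I would justify the first displayed equivalence. Starting from the pointwise left Kan extension formula
\[
    Q(X)_n \simeq \colim_{(N,\alpha\colon [n] \to \Lseg(N)) \in (\calN \times_{\simp} \simp_{[n]/})^\op} \Map_{\PSh(\simp)}(N, X)
\]
from \cref{rem:formula-for-Q}, I would invoke \cref{lem:necklace-decomposition}, which says that $\xJoin\colon \prod_{i=1}^n \Nec \to \calN \times_{\simp} \simp_{[n]/}$ admits a right adjoint and is therefore initial. Since $\xJoin(M_1,\dots,M_n)$ has underlying necklace $M_1 \vee \dots \vee M_n$, restricting the above colimit along $\xJoin^{\op}$ replaces the indexing category by $(\Nec^\op)^n$ and the presheaf being integrated by $(M_1,\dots,M_n) \mapsto \Map_{\PSh(\simp)}(M_1 \vee \cdots \vee M_n, X)$, giving the first claimed formula.

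Next I would derive the pullback description. The necklace $M_1 \vee \dots \vee M_n$ is by definition the iterated pushout of the $M_i$ along their pointings by $\Delta^0$, so $\Map_{\PSh(\simp)}(-, X)$ turns this into an iterated pullback over $\Map_{\PSh(\simp)}(\Delta^0, X) = X_0$. Plugging into the previous formula gives the second equivalence. For $n = 0$ the indexing category is the terminal one, the empty join is $\Delta^0$, and thus $Q(X)_0 \simeq \Map_{\PSh(\simp)}(\Delta^0, X) = X_0$.

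Finally I would address the functoriality in inert maps. A map $\varphi\colon [m] \intto [n]$ picks out a sub-interval $\{k, k+1, \dots, k+m\} \subseteq [n]$, and acts on the colimit formula from \cref{rem:formula-for-Q} via the functor $\calN \times_\simp \simp_{[n]/} \to \calN \times_\simp \simp_{[m]/}$ given by precomposition with $\varphi$. I would trace this functor through the adjunction $\xJoin \dashv \xJoin^R$ of \cref{lem:necklace-decomposition}: the computation of $\xJoin^R$ in the proof of that lemma sends $(N, \alpha)$ to the tuple of ``subneedles'' $(N_{\alpha(i-1), \alpha(i)})_{i=1}^n$. On an object of the form $\xJoin(M_1,\dots,M_n)$ with $\alpha = \Lseg(\iota_1 \vee \cdots \vee \iota_n)$, the subneedle corresponding to $\{\alpha(i-1), \alpha(i)\}$ is simply $M_i$, so after restriction along $\varphi$ we recover precisely $(M_{k+1}, \dots, M_{k+m})$. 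This identifies the induced transition map $Q(X)_n \to Q(X)_m$ with the colimit of the restriction-to-consecutive-coordinates functor $(\Nec^\op)^n \to (\Nec^\op)^m$, as claimed. I expect the main subtlety to lie in this last step, specifically in carefully matching the inert-map functoriality on the original indexing category with the restriction functoriality on $(\Nec^\op)^n$ via the adjoint equivalence.
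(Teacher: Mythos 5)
Your proposal is correct and follows essentially the same route as the paper: the observation is justified there exactly by restricting the Kan-extension colimit of \cref{rem:formula-for-Q} along the initial functor $\xJoin$ of \cref{lem:necklace-decomposition}, converting the mapping space out of the iterated pushout $M_1 \vee \dots \vee M_n$ into an iterated pullback over $X_0$, and reading off the inert functoriality by restriction to sub-tuples. Your extra care in tracing the inert maps through the adjunction $\xJoin \dashv \xJoin^R$ only makes explicit what the paper leaves as an assertion.
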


\begin{prop}\label{prop:Segal-one-iteration}
    For any simplicial space $X_\bullet$ the simplicial space $Q(X)_\bullet$ is a Segal space.
\end{prop}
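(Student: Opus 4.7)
My plan is to leverage \cref{obs:rewriting-Q} together with the fact that colimits in $\calS$ are universal (equivalently, $\calS$ is an $\infty$-topos, so pullbacks commute with colimits). Recall that the Segal map
\[
Q(X)_n \to Q(X)_1 \underset{Q(X)_0}{\times} \cdots \underset{Q(X)_0}{\times} Q(X)_1
\]
is assembled from the action of the inert inclusions $[\{i-1,i\}]\hookrightarrow [n]$ and $[\{i\}]\hookrightarrow [n]$ on $Q(X)_\bullet$, whose behaviour was pinned down in \cref{obs:rewriting-Q}. Concretely, $Q(X)_0 \simeq X_0$, and the map $Q(X)_n \to Q(X)_1$ corresponding to $[\{i-1,i\}]\hookrightarrow [n]$ is induced on colimit diagrams by the projection $(\Nec^\op)^n \to \Nec^\op$ onto the $i$-th factor, paired with the projection of the iterated fiber product onto its $i$-th factor.

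Next, I would apply universality of colimits to commute each of the $n-1$ pullbacks over $X_0$ past the corresponding colimit, yielding an equivalence
\[
Q(X)_1 \underset{X_0}{\times}\cdots\underset{X_0}{\times} Q(X)_1 \;\simeq\; \colim_{(M_1,\dots,M_n)\in(\Nec^\op)^n} \Map_{\PSh(\simp)}(M_1,X) \underset{X_0}{\times}\cdots\underset{X_0}{\times} \Map_{\PSh(\simp)}(M_n,X),
\]
whose right-hand side is precisely $Q(X)_n$ by \cref{obs:rewriting-Q}. This already identifies the target of the Segal map with the source; it remains only to verify that the equivalence so produced is in fact inverse to the Segal map.

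The only remaining work, and the main (modest) obstacle, is a diagram chase: at the level of the indexing $(M_1,\dots,M_n)\in(\Nec^\op)^n$ both the Segal map and the universality equivalence are induced by the identity transformation on the diagram $(M_1,\dots,M_n)\mapsto \prod^{X_0}_{i}\Map_{\PSh(\simp)}(M_i,X)$---one side via the description of inert functoriality in \cref{obs:rewriting-Q}, the other via universality. Once this compatibility is recorded, the Segal condition for $Q(X)_\bullet$ is immediate. The case $n=0$ is covered by the observation that $Q(X)_0 \simeq X_0$, so no separate argument is needed there.
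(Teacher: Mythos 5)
Your proposal is correct and follows essentially the same route as the paper: both identify $Q(X)_n$ and the iterated fiber product $Q(X)_1\times_{Q(X)_0}\cdots\times_{Q(X)_0}Q(X)_1$ with the colimit over $(\Nec^\op)^n$ of $(M_1,\dots,M_n)\mapsto \Map(M_1,X)\times_{X_0}\cdots\times_{X_0}\Map(M_n,X)$, using \cref{obs:rewriting-Q} on one side and universality of colimits (i.e.\ that the fiber product over $X_0$ preserves colimits in each variable) on the other. The compatibility with the Segal map that you flag as the remaining diagram chase is exactly what the paper's commutative square records.
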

\begin{proof}
    Consider the following diagram:
    \[
    \begin{tikzcd}
        \colim\limits_{(M_1, \dots, M_n) \in (\Nec^\op)^n} \Map(M_1, X) \underset{X_0}{\times} \dots \underset{X_0}{\times} 
        \Map(M_n, X) \ar[r] \ar[d] &
        Q(X)_n \ar[d] \\
        \colim\limits_{M_1 \in \Nec^\op} \Map(M_1, X) \underset{X_0}{\times} \dots \underset{X_0}{\times} \colim\limits_{M_n \in \Nec^\op} \Map(M_n, X) \ar[r] &
        Q(X)_1 \underset{Q(X)_0}{\times} \dots \underset{Q(X)_0}{\times} Q(X)_1.
    \end{tikzcd}
    \]
    The horizontal maps are equivalences by 
    \cref{lem:necklace-decomposition} and \cref{obs:rewriting-Q}.
    The left vertical map is an equivalence since the cartesian product in $\calS_{/X_0}$ preserves colimits in each variable.
\end{proof}

\subsection{Variations on the Segalification formula}
\subsubsection{A formula for mapping spaces}
Given a Segal space $X \in \Seg_\Dop(\calS)$ the mapping spaces in the associated \category{} $\cat{X}$ may be computed as
\[
    \Map_{\cat{X}}(x, y) \simeq \{x\} \times_{X_0} X_1 \times_{X_0} \{y\}
\]
for any $x, y \in X_0$.
Below we show how to use the results of the previous section to derive a formula for these mapping spaces when $X$ is an arbitrary simplicial space.
In the case where $X$ is a simplicial set this recovers the formula of Dugger--Spivak \cite{Dugger-Spivak}, which inspired our Segalification formula.

\begin{lem}\label{lem:formula-for-mapping-space}
    For any simplicial space $X\in \PSh(\simp)$ and $x,y \in X$ there is a canonical equivalence
    \[
        |\Nec_{/(X,x,y)}| \simeq \Map_{\cat{X}}(x, y),
    \]
    where $\Nec_{/(X,x,y)} \subseteq \left(\PSh(\simp)_{\Delta^0 \amalg \Delta^0/}\right)_{/(X, x, y)}$ denotes the full subcategory spanned by necklaces.
\end{lem}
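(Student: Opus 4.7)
The plan is to deduce the statement from the necklace formula (\cref{thmA:necklace-formula}). Since the associated \category{} of $X$ agrees with that of its Segalification, and since $\Lseg(X)_0 \simeq X_0$ by \cref{obs:rewriting-Q}, the standard Segal-space formula for mapping spaces reads
\[
    \Map_{\cat{X}}(x, y) \simeq \{x\} \times_{X_0} \Lseg(X)_1 \times_{X_0} \{y\}.
\]
I would plug in $\Lseg(X)_1 \simeq \colim_{N \in \Nec^\op} \Map_{\PSh(\simp)}(N, X)$ and commute the pullback past the colimit. This is allowed because pullback in $\calS$ preserves colimits in each variable, and because the source/target evaluation maps $\Map_{\PSh(\simp)}(N, X) \to X_0$ are natural in $N \in \Nec$ (morphisms of necklaces preserve the extrema).

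The next step is to identify the pointwise pullback with a bipointed mapping space. By the universal property of over/under categories one has
\[
    \{x\} \times_{X_0} \Map_{\PSh(\simp)}(N, X) \times_{X_0} \{y\}
    \simeq
    \Map_{\PSh(\simp)_{\Delta^0 \amalg \Delta^0/}}\bigl((N, \min, \max), (X, x, y)\bigr).
\]
This rewrites $\Map_{\cat{X}}(x, y)$ as a colimit over $\Nec^\op$ of bipointed mapping spaces from necklaces into $(X, x, y)$.

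To finish, I would recognize this colimit of representables as a classifying space using the standard identity
\[
    \colim_{i \in I^\op} \Map_\calD(F(i), Y) \simeq \bigl| I \times_\calD \calD_{/Y} \bigr|,
\]
valid for any functor $F \colon I \to \calD$ of \categories{} and any $Y \in \calD$; this is simply the unstraightening equivalence $\colim_{I^\op} P \simeq |\int P|$ applied to the presheaf $P(i) = \Map_\calD(F(i), Y)$, whose category of elements is exactly $I \times_\calD \calD_{/Y}$. Applying this to the inclusion $\Nec \hookrightarrow \PSh(\simp)_{\Delta^0 \amalg \Delta^0/}$ with $Y = (X, x, y)$ yields precisely $|\Nec_{/(X, x, y)}|$.

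The main obstacle is conceptual rather than technical: once the necklace formula is in hand, the argument is just an assembly of universality of colimits in $\calS$ and the category-of-elements identity. No delicate cofinality argument beyond those already used in \cref{lem:necklace-decomposition} is required.
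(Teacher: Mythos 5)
Your proposal is correct and is essentially the paper's argument run in the opposite direction: the paper starts from $|\Nec_{/(X,x,y)}|$, identifies it as the fiber over $(x,y)$ of the realization of the right fibration $\Nec \times_{\PSh(\simp)} \PSh(\simp)_{/X} \to \Nec$ classifying $\Map_{\PSh(\simp)}(-,X)$, and uses universality of colimits in $\calS$ to commute $|-|$ with that fiber, landing on $\{(x,y)\} \times_{X_0^{\times 2}} Q(X)_1 \simeq \Map_{\cat{X}}(x,y)$. The ingredients — the necklace formula for $\Lseg(X)_1$, stability of colimits in $\calS$ under base change, and the straightening identification of $\colim_{\Nec^\op}\Map(-,X)$ with a classifying space — are exactly the same, so no further comment is needed.
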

\begin{proof}
    Interpreting $\Nec$ as a full subcategory of $\PSh(\simp)_{\Delta^0 \amalg \Delta^0/}$ by recording the minimal and maximal vertex,
    we can fit $\Nec_{/(X, x, y)}$ into a cartesian square:
    \[
    \begin{tikzcd}
        \Nec_{/(X,x,y)} \ar[r] \ar[d] &
        \Nec \times_{\PSh(\simp)} \PSh(\simp)_{/X} \ar[d] \\
        {\{(x,y)\}} \ar[r] & X_0 \times X_0.
    \end{tikzcd}
    \]
    The top right corner is a right fibration over $\Nec$ corresponding to the presheaf 
    \[
    \Map_{\PSh(\simp)}(-, X) \colon \Nec^\op \to \calS.
    \]
    The weak homotopy type of the top right corner is thus the colimit of this functor, which is precisely the definition of $Q(X)_1$.
    While the functor $|-|\colon \Cat \to \calS$ 
    does not generally preserve pullbacks, it does preserve those cartesian squares where the bottom arrow is a map of spaces (because $\Dop$-colimits in $\calS$ are stable under base change).
    For the square at hand we obtain
    \[|\Nec_{/(X,x,y)}| \simeq \{(x,y)\} \times_{X_0^{\times 2}} Q(X)_1 \simeq \Map_{\cat{X}}(x,y),\]
    where the second equivalence holds since the Rezk-completion of $Q(X) \simeq \Lseg(X)$
    is the nerve of $\cat{X}$.
\end{proof}

\begin{rem}
    Given three points $x,y,z \in X$ the monoidal structure $\vee$ on $\Nec$ yields a functor
    \[
        \vee \colon \Nec_{/(X,x,y)} \times \Nec_{/(X,y,z)}
            \too
        \Nec_{/(X,x,z)}.
    \]
    On weak homotopy types this yields the composition 
    $\Map_{\cat{X}}(x,y) \times \Map_{\cat{X}}(y,z) \to \Map_{\cat{X}}(x,z)$
    in $\cat{X}$.
    (As in \cite[Eqn.~1.2]{Dugger-Spivak}.)
    This can be seen by an argument similar to \cref{lem:formula-for-mapping-space} using the necklace formula for $Q_2X$.
\end{rem}

\subsubsection{$1$-categories}
Similar to how $\Delta^\op$-colimits in $1$-categories can be computed as reflexive coequalizers,
$\Nec^\op$ colimits in a $1$-category can be reduced to certain ``thin'' necklaces.

\begin{defn}
    We say that a necklace $N = \Delta^{n_1}\vee \dots \Delta^{n_r} \in \Nec$ is \hldef{thin}
    if $\sum_i n_i \le r+1$ and $n_i \ge 1$,
    in other words if it consists of $1$-simplices and at most one two-simplex.
    If $N$ consists only of $1$-simplices, we say that it is \hldef{very thin}.
    Let $\hldef{\Nec_{\rm thin}} \subset \Nec$ denote the full subcategory of thin necklaces.
\end{defn}

\begin{lem}\label{lem:thin-necklace}
    The full inclusion $\Nec_{\rm thin}^\op \hookrightarrow \Nec^\op$ is $1$-final,
    that is, for any functor $\Nec^\op \to \calC$ to a $1$-category $\calC$ the colimit may equivalently be computed over $\Nec_{\rm thin}^\op$.
\end{lem}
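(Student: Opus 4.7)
The plan is to apply the $1$-categorical Quillen Theorem A: the inclusion $\Nec_{\rm thin}^\op \hookrightarrow \Nec^\op$ is $1$-final if and only if for every $N = \Delta^{n_1} \vee \cdots \vee \Delta^{n_k} \in \Nec$ the overcategory $(\Nec_{\rm thin})_{/N}$ of thin necklaces $T$ equipped with a morphism $T \to N$ in $\Nec$ (with morphisms over $N$) is connected. Since the spine inclusion $S_N \coloneq (\Delta^1)^{\vee (n_1 + \cdots + n_k)} \hookrightarrow N$ lies in this slice, it is nonempty, and I need only connect each object to $(S_N \hookrightarrow N)$ by a zigzag. As a first reduction, I observe that any thin $T$ admits its own spine inclusion $S_T \hookrightarrow T$ in $\Nec$, producing a morphism $(S_T \to N) \to (T \to N)$ in the slice; this reduces the problem to the very thin case.

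A very thin $V = (\Delta^1)^{\vee r} \to N$ is determined by the vertex-image sequence $0 = a_0 \leq a_1 \leq \cdots \leq a_r = n_1 + \cdots + n_k$, with consecutive pairs lying in a common component simplex of $N$. If some $a_{i-1} = a_i$, the obvious collapse $V \to V' \coloneq (\Delta^1)^{\vee (r-1)}$ is a necklace morphism over $N$, so I may restrict to sequences of strictly positive jumps and then induct on the excess $e(V) \coloneq \sum_i (a_i - a_{i-1} - 1) \geq 0$, the base case $e(V) = 0$ forcing $V = S_N$. For the inductive step I choose $i$ with $a_i - a_{i-1} \geq 2$ together with an intermediate value $a_{i-1} < b < a_i$ in the same component simplex of $N$, and form the thin necklace $T'' \coloneq (\Delta^1)^{\vee (i-1)} \vee \Delta^2 \vee (\Delta^1)^{\vee (r-i)}$ whose middle $\Delta^2$ has its vertices mapped to $a_{i-1}, b, a_i$. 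The map $V \to T''$ sending the $i$-th edge of $V$ to the long face $[0,2]$ of this $\Delta^2$ and every other edge to the corresponding spine edge of $T''$ is a necklace morphism over $N$, as is the spine inclusion $S_{T''} = (\Delta^1)^{\vee (r+1)} \hookrightarrow T''$. The resulting zigzag $V \to T'' \leftarrow S_{T''}$ lives in the slice and connects $V$ to a very thin necklace whose encoding sequence is obtained from that of $V$ by inserting $b$ between $a_{i-1}$ and $a_i$, so $e(S_{T''}) = e(V) - 1$ and the induction closes.

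The main obstacle is the bookkeeping: one must verify that each prescribed vertex assignment extends to a well-defined map of bi-pointed simplicial sets, and that each triangle in the zigzag commutes strictly over $N$ in $\Nec$. Once these elementary combinatorial checks are laid out, the argument reduces to a straightforward induction on the excess.
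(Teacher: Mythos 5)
Your proof is correct and follows essentially the same strategy as the paper: reduce to connectivity of the slices $\Nec_{{\rm thin}/N}$, pass to very thin necklaces encoded as admissible vertex sequences in $N$, and connect a sequence to a one-step modification of it by a zigzag through a thin necklace carrying a single $2$-simplex. The only (immaterial) difference is the direction of the induction: you insert vertices until you reach the full spine $S_N$, whereas the paper deletes vertices until it reaches the unique minimal path through the joints of $N$.
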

\begin{proof}
    We need to show that for any necklace $N = \Delta^{n_1} \vee \dots \vee \Delta^{n_r} \in \Nec$ the slice category $\Nec_{{\rm thin}/N}$ is \emph{connected}.
    We enumerate the vertices of $N$ in their canonical order as $0, \dots, n = \sum_i n_i$.
    A very thin necklace over $N$, i.e.~a map $\Delta^1 \vee \dots \vee \Delta^1 \to N$, may equivalently be encoded as a non-decreasing path $0=a_0 \le \dots \le a_k=n$ in $[n]$.
    These paths are subject to the condition that we never have strict inequalities $a_l < \sum_{i=1}^s n_i < a_{l+1}$ for any $s$ and $l$.
    Suppose that $p = (0= a_0 \le \dots \le a_k = n)$ is such a path and $s$ is such that $p' = (0=a_0 \le \dots \widehat{a_s} \dots \le a_k = n)$ is still an admissible path.
    Then there is a thin necklace $M$ with a two-simplex $(a_{s-1} \le a_s \le a_{s+1})$ that contains both of these paths.
    In particular, the paths are connected through a zig-zag $p \to M \leftarrow p'$ as objects of $\Nec_{{\rm thin}/N}$.
    Proceeding by removing a vertex whenever possible, we see that every very thin necklace over $N$ is connected in $\Nec_{{\rm thin}/N}$ to a very thin necklace that corresponds to a minimal path in $N$.
    But there is only one path in $N$ that is minimal with respect to removing vertices, namely $(0 \le n_1 \le \dots \le \sum_{i=1}^{r-1} n_i \le n)$.
    Therefore all the very thin objects in $\Nec_{{\rm thin}/N}$ are connected, and thus the category is connected as every (thin) necklace contains a very thin necklace.
\end{proof}

\begin{example}\label{ex:ho-of-X}
Suppose that $X_\bullet$ is a simplicial space and we want to compute the homotopy category $h_1(\cat{X})$.
For simplicity, let us assume that $X_n$ is discrete for all $n$.%
\footnote{
    This is not a very restrictive assumption. 
    Starting with a general simplicial space $Y_\bullet$ we may base change it along a $\pi_0$-surjective map $Z_0 \to Y_0$ to get a simplicial space $Z_n = (Z_0)^{n+1} \times_{Y_0^{n+1}} Y_n$ such that the resulting functor $\cat{Z_\bullet} \to \cat{Y_\bullet}$ will be an equivalence.
    Now we may choose $Z_0$ to be discrete and define $X_n \coloneq \pi_0(Z_n)$.
    Then $\cat{Z_\bullet} \to \cat{X_\bullet}$ induces an equivalence on homotopy categories.
}
Then the set of morphisms in $h_1(\cat{X}))$ is exactly $\pi_0 (\Lseg(X)_1)$ and we may compute it as the colimit
\[
    \mrm{Mor}(h_1(\cat{X})) 
    \cong \colim_{N \in \Nec^\op} \Map(N, X)
    \cong \colim_{\Delta^{n_1}\vee \cdots \vee \Delta^{n_r} \in \Nec^\op} X(\Delta^{n_1}) \times_{X(\Delta^0)} \cdots \times_{X(\Delta^0)} X(\Delta^{n_r}).
\]
in the $1$-category of sets.
By \cref{lem:thin-necklace} it suffices to take the colimit over $\Nec_{\rm thin}^\op$.
Moreover the very thin necklaces are $0$-final, so the colimit may be expressed as a coproduct over the very thin necklaces modulo an equivalence relation.
This leads to the formula
\[
    \mrm{Mor}(h_1(\cat{X})) 
    \cong 
    \left(\coprod_{n \ge 0} X_1 \times_{X_0} \dots \times_{X_0} X_1\right)/\sim
\]
where the equivalence relation is generated by
$(f_1, \dots, f_n) \sim (f_1, \dots, f_{i-1}, g, f_{i+2}, \dots, f_n)$
whenever there is a $2$-simplex in $X$ witnessing $f_{i+1} \circ f_i = g$,
and $(f_1, \dots, f_n) \sim (f_1, \dots, f_{i-1}, f_{i+1}, \dots, f_n)$ whenever $f_i$ is a degenerate $1$-simplex.
This recovers the classical formula for the homotopy category of a simplicial set:
namely, it is the free category on the edges of $X$ modulo the relations generated by the $2$-simplices and the degenerate $1$-simplices.
\end{example}

\subsubsection{Segalification in other categories}

In this section we establish criteria on a presentable \category{} which guarantee that Segalification is given by the necklace formula. 
This is summarized by the following result, which we prove in the remainder of this section.

\begin{prop}\label{prop:Segalification-general-cats}
    Let $\calV$ be a presentable \category{} in which sifted colimits are stable under base change.
    Then the left adjoint to the inclusion
    $\Seg_\Dop(\calV) \hookrightarrow \Fun(\Dop,\calV)$ 
    is given by the necklace formula:
    \[
        \Lseg(X)_1 
        \simeq \colim_{N \in \Nec^\op} (i_*X)(N)
        \simeq \colim_{\Delta^{n_1} \vee \dots \vee \Delta^{n_k} \in \Nec^\op} X_{n_1} \times_{X_0} \dots \times_{X_0} X_{n_k}.
    \]
    where $i_*$ denotes the right Kan extension
    $i_*\colon \Fun(\Dop, \calV) \to \Fun(\calN^\op, \calV)$.
\end{prop}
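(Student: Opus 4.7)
\emph{Plan.} My strategy is to replay the argument of the preceding subsections with $\calV$ replacing $\calS$. The formal setup of \cref{defn:necklace-context}--\cref{prop:lambda-L-local} --- defining the functor $Q_\calN$ and the $\Lseg$-local natural transformation $\lambda\colon Q_\calN \to \Lseg$ --- uses only adjunctions between presheaf \categories{} and so adapts verbatim to $\calV$-valued presheaves. Likewise \cref{obs:rewriting-Q}, which relies on the finality of $\xJoin$ from \cref{lem:necklace-decomposition}, depends only on $\Nec$ itself. This yields a natural transformation $Q_\calV \to \Lseg$ together with the formula
\[
Q_\calV(X)_n \simeq \colim_{(M_1,\dots,M_n) \in (\Nec^\op)^n} (i_*X)(M_1) \underset{X_0}{\times} \cdots \underset{X_0}{\times} (i_*X)(M_n).
\]

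It remains to verify that $Q_\calV(X)$ is a Segal space in $\calV$. The argument of \cref{prop:Segal-one-iteration} reduces this to the equivalence
\[
\colim_{(M_1,\dots,M_n) \in (\Nec^\op)^n} (i_*X)(M_1) \underset{X_0}{\times} \cdots \underset{X_0}{\times} (i_*X)(M_n) \xrightarrow{\simeq} \colim_{M_1 \in \Nec^\op} (i_*X)(M_1) \underset{X_0}{\times} \cdots \underset{X_0}{\times} \colim_{M_n \in \Nec^\op} (i_*X)(M_n)
\]
in $\calV_{/X_0}$. For $\calV = \calS$ this followed from universality of colimits; in general I would deduce it from two ingredients: (i) $\Nec^\op$ is a \emph{sifted} \category{}, and (ii) the standing hypothesis that sifted colimits in $\calV$ are stable under base change. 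Combined with Fubini for iterated colimits, these let me commute each $\colim_{M_j}$ past the pullbacks over $X_0$ one factor at a time. The $\Lseg$-local map $Q_\calV \to \Lseg$ is then an equivalence on every $X$ by the analogue of \cref{prop:lambda-L-local}.

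The main obstacle is ingredient (i). To show $\Nec^\op$ is sifted I would verify that for every pair $(N_1, N_2) \in \Nec^2$ the double-slice category whose objects are triples $(M, M \to N_1, M \to N_2)$ with $M \in \Nec$ is weakly contractible. Following the style of \cref{lem:thin-necklace}, my approach would be to first show that spans with $M$ a very thin necklace form a cofinal subcategory (by iteratively collapsing non-spine beads), and then to exhibit an explicit zig-zag connecting any two such very thin spans through intermediate thin spans containing a single $2$-simplex, mirroring the ``remove a vertex'' procedure in the proof of \cref{lem:thin-necklace}. This combinatorial verification of siftedness for $\Nec^\op$ is the crux of the generalization to arbitrary $\calV$.
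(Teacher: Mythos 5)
Your overall architecture is the same as the paper's: the proof does reduce to (i) siftedness of $\Nec^\op$ and (ii) the hypothesis that sifted colimits in $\calV$ are stable under base change, and the paper's \cref{lem:Segalification-for-V} carries out exactly the reduction you describe (it transports the construction along a presentable embedding $\calV \hookrightarrow \PSh(\calE)$ rather than redoing the necklace-context formalism for $\calV$-valued presheaves, but that is a cosmetic difference). The problem is your treatment of the crux, ingredient (i).

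Siftedness of $\Nec^\op$ means the diagonal $\Nec^\op \to \Nec^\op \times \Nec^\op$ is final, which requires the span categories $\{(M, M\to N_1, M \to N_2)\}$ to be \emph{weakly contractible}, i.e.\ to have contractible classifying space. The zig-zag procedure you propose, modelled on \cref{lem:thin-necklace}, only establishes that these categories are \emph{connected}; that is the right notion for $1$-finality (colimits valued in a $1$-category) but is strictly weaker than what siftedness demands, and nothing in your sketch controls the higher homotopy of the nerve. The paper avoids this combinatorial difficulty entirely in \cref{lem:nec-sifted}: it identifies the span category with $\Nec_{/A\times B}$ (product taken in bipointed simplicial spaces), invokes \cref{lem:formula-for-mapping-space} --- a consequence of the already-established necklace formula over $\calS$ --- to identify its weak homotopy type with $\Map_{\cat{A\times B}}\bigl((a_{\min},b_{\min}),(a_{\max},b_{\max})\bigr)$, and then uses that Segalification preserves products (\cref{lem:Segalification-products}) to compute $\cat{A\times B}\simeq [n]\times[m]$, whose mapping spaces are contractible. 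If you want to keep a purely combinatorial proof of siftedness you would need a genuinely different argument (e.g.\ a deformation retraction of the nerve), not an enhancement of the connectivity zig-zag.
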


Recall that if $\frX$ is an $\infty$-topos then \textit{all} colimits in $\frX$ are stable under base change (i.e.~they are ``universal'') \cite[Proposition 6.1.3.19]{HTT}.
    In particular, \cref{prop:Segalification-general-cats} applies to $\infty$-topoi.
    A wider variety of examples is provided by passing to algebras over \operads{}.
\begin{example}\label{ex:O-algebra-universal-colimits}
    Let $\calV$ be a presentably symmetric monoidal \category{}%
    \footnote{In fact, it suffices to ask that the monoidal structure is compatible with sifted colimits.}
    and $\calO$ be an \operad{}.
    Then the forgetful functor $\Alg_\calO(\calV) \to \Fun(\mathrm{col}(\calO), \calV)$, which only remembers the object assigned to each colour $c \in \mathrm{col}(\calO)$, preserves and creates both limits and sifted colimits \cite[{}3.2.2.4 \& 3.2.3.1]{HA}.
    Consequently, if sifted colimits in $\calV$ are stable under base change, then the same holds for $\Alg_\calO(\calV)$. 
\end{example}

\begin{lem}\label{lem:Segalification-for-V}
    \cref{prop:Segalification-general-cats} holds if we assume that $\Nec^\op$-colimits in $\calV$ are stable under base change.
\end{lem}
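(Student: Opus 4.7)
The plan is to imitate the proof given for $\calV = \calS$ in Section 2, tracking precisely where base-change stability is invoked. The general necklace-context machinery of Section 2.1 is formal and carries over to $\calV$-valued presheaves without modification: for $X \in \Fun(\simp^\op,\calV)$, one defines $i_*X \in \Fun(\calN^\op,\calV)$ by right Kan extension (so that $(i_*X)(\Delta^{n_1}\vee\cdots\vee\Delta^{n_k}) \simeq X_{n_1}\times_{X_0}\cdots\times_{X_0}X_{n_k}$ via the pushout presentation of a necklace) and sets $Q(X) \coloneq \ell_!\, i_*X$. The analog of \cref{lem:alpha-and-beta} is purely formal, reducing to the check that on representables the natural map is the unit $N \to \ell(N) = \Lseg(N)$, which is a $\Lseg$-equivalence regardless of $\calV$. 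Consequently, one obtains an $\Lseg$-equivalence $\lambda \colon \id \to Q$ in the $\calV$-valued setting exactly as in \cref{prop:lambda-L-local}.

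The only substantive step is to verify that $Q(X)$ is a Segal object in $\calV$, i.e.~the analog of \cref{prop:Segal-one-iteration}. This is where the hypothesis on $\calV$ enters. The combinatorial input --- Lemma \ref{lem:necklace-decomposition} and \cref{obs:rewriting-Q} --- concerns only the indexing $\calN \times_{\simp} \simp_{[n]/}$ and applies unchanged, yielding
\[
Q(X)_n \simeq \colim_{(M_1,\dots,M_n) \in (\Nec^\op)^n} (i_*X)(M_1) \underset{X_0}{\times} \cdots \underset{X_0}{\times} (i_*X)(M_n).
\]
The Segal map factors this colimit through the iterated fibre product of the $n = 1$ colimits, and the fact that the resulting comparison is an equivalence amounts to commuting $- \times_{X_0} -$ past $\Nec^\op$-colimits in each variable. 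This is exactly the hypothesis that $\Nec^\op$-colimits in $\calV$ are stable under base change.

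Having established that $Q(X)$ is both Segal and receives an $\Lseg$-equivalence from $X$, the second half of \cref{prop:lambda-L-local} gives $Q(X) \simeq \Lseg(X)$. Restricting to degree $1$ yields the stated necklace formula. The main (and essentially only) obstacle is the Segal-object check, and it dissolves under the stated base-change hypothesis; the remainder of the argument is a formal transcription of the $\calV = \calS$ proof.
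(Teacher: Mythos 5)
Your proof is essentially correct, but it takes a genuinely different route from the paper's. You propose to re-run the necklace-context machinery internally to $\calV$-valued presheaves, defining $i_\ast$ and $Q = \ell_!\, i_\ast$ directly on $\Fun(\Dop,\calV)$ and re-proving the analogues of \cref{lem:alpha-and-beta} and \cref{prop:lambda-L-local}. The paper instead keeps all of that machinery in the $\calS$-valued setting and transports: presentability gives a fully faithful right adjoint $I \colon \calV \hookrightarrow \PSh(\calE)$, one sets $Q^\calV \coloneq L^{\simp} \circ Q \circ I^{\simp}$ where $Q$ is the already-established endofunctor of $\Fun(\Dop,\PSh(\calE)) \simeq \Fun(\calE,\PSh(\simp))$ applied pointwise in $\calE$, and the fact that $\lambda^\calV$ is a Segal equivalence is checked by an adjunction argument against a Segal object $Y$, using that $I^{\simp}(Y)$ is again Segal. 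Both routes then localize the remaining work to the same place: the verification that $Q(X)$ is Segal, which is where $\Nec^\op$-colimits being stable under base change is used, exactly as you say. The one point you should make explicit in your route is the reduction ``to representables'' in your analogue of \cref{lem:alpha-and-beta}: for general $\calV$ the category $\Fun(\calN^\op,\calV)$ is generated under colimits not by the objects $\Yo_N$ but by the objects $\Yo_N \otimes v$ for $v \in \calV$, so the check is that $N \otimes v \to \Delta^n \otimes v$ is a Segal equivalence in $\Fun(\Dop,\calV)$. This does hold --- map into a Segal object $Y$ and use that $[n] \mapsto \Map_\calV(v, Y_n)$ is a Segal space because $\Map_\calV(v,-)$ preserves limits --- but it is not literally ``the same check regardless of $\calV$,'' and eliding it is the only real imprecision in your write-up. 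What the paper's detour through $\PSh(\calE)$ buys is that none of the formal necklace-context arguments need to be revisited for a general $\calV$; what your route buys is a statement and proof intrinsic to $\calV$, with no auxiliary presentation.
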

\begin{proof}
    The left adjoint $\Lseg$ exists by the adjoint functor theorem.
    Since $\calV$ is presentable we may find a small \category{} $\calE$ and a fully faithful right adjoint $I\colon \calV \hookrightarrow \PSh(\calE)$.
    We denote the resulting adjunction on presheaf categories by
    \[I^{\simp} \colon \Fun(\Dop, \calV) \adj \Fun(\Dop, \PSh(\calE)) :\! L^{\simp}\]
    We now define an endofunctor $Q^\calV \coloneq L^{\simp} \circ Q \circ I^{\simp} \colon \Fun(\Dop, \calV) \to \Fun(\Dop, \calV)$ where $Q$ is the endofunctor on $\Fun(\Dop, \PSh(\calE)) \simeq \Fun(\calE, \PSh(\simp))$, pointwise given by the usual formula (see \cref{obs:rewriting-Q}).
    This $Q^\calV$ receives a natural transformation 
    \[
        \lambda^\calV\colon
        L^{\simp} \circ I^{\simp} 
        \xrightarrow{L^{\simp} \circ \lambda \circ I^{\simp}}
        L^{\simp} \circ Q \circ I^{\simp} = Q^\calV
    \]
    coming from $\lambda\colon \id \to Q$. (Note that the source of $\lambda$ is $L^{\simp} \circ I^{\simp} \simeq \id_{\Fun(\Dop, \calV)}$.)
    This transformation is a Segal equivalence. 
    Indeed,
    if $X, Y \colon \Dop \to \calV$ and $Y$ is Segal, then in the commutative square
\[\begin{tikzcd}[column sep = large]
	{\Map_{\Fun(\Dop, \calV)}(Q^\calV X, Y)} & {\Map_{\Fun(\Dop, \calV)}((L^{\simp} \circ I^{\simp})(X), Y)} \\
	{\Map_{\Fun(\Dop, \PSh(\calE))}((Q \circ I^{\simp})(X), I^{\simp}(Y))} & {\Map_{\Fun(\Dop, \PSh(\calE))}(I^{\simp}(X), I^{\simp}(Y))}
	\arrow["\simeq", from=1-2, to=2-2]
	\arrow["\simeq", from=1-1, to=2-1]
	\arrow["{(-) \circ \lambda^\calV_X}", from=1-1, to=1-2]
	\arrow["{(-) \circ \lambda_{I^{\simp}(X)}}", from=2-1, to=2-2]
\end{tikzcd}\]
    the bottom map is an equivalence since $I^{\simp}(Y)$ is Segal and thus so is the top map.

    It remains to show that $Q^\calV(X)$ is Segal for all $X \colon \Dop \to \calV$.
    This follows from the same proof as \cref{prop:Segal-one-iteration} by using that $\Nec^\op$-colimits are stable under base change.
\end{proof}

In principle, it might be difficult to tell whether $\Nec^\op$-shaped colimits are stable under base change in a given \category{}, but fortunately
$\Nec^\op$ is a sifted category, colimits over which are well understood.
We will deduce this from the following fact, to which it is intimately linked:

\begin{lem}\label{lem:Segalification-products}
    The Segalification functor $\bbL\colon \PSh(\Delta) \to \PSh(\Delta)$ preserves products.
\end{lem}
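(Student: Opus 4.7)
Since the Segal condition is a pointwise limit condition, the inclusion $\Seg_{\Dop}(\calS) \hookrightarrow \PSh(\simp)$ preserves products, so $\bbL(X) \times \bbL(Y)$ is again a Segal space. The unit $X \times Y \to \bbL(X) \times \bbL(Y)$ therefore factors uniquely through a canonical comparison $\phi \colon \bbL(X \times Y) \to \bbL(X) \times \bbL(Y)$, and the claim is that $\phi$ is an equivalence. Equivalently, the unit is a Segal-local equivalence, and by the factorization
\[
X \times Y \xrightarrow{\eta_X \times \id_Y} \bbL(X) \times Y \xrightarrow{\id \times \eta_Y} \bbL(X) \times \bbL(Y)
\]
together with $2$-out-of-$3$, this reduces to showing that the class $W$ of Segal-local equivalences is closed under $(- \times Z)$ for every $Z \in \PSh(\simp)$.

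The class $W$ is strongly saturated and is generated by the spine inclusions $\mrm{Sp}^n \coloneq \Delta^1 \cup_{\Delta^0} \cdots \cup_{\Delta^0} \Delta^1 \hookrightarrow \Delta^n$ for $n \geq 2$. Since $(- \times Z)$ preserves colimits (as $\PSh(\simp)$ is cartesian closed), a strong saturation argument reduces the question to checking that $\mrm{Sp}^n \times Z \hookrightarrow \Delta^n \times Z$ lies in $W$ for all $n$ and $Z$. Writing $Z$ as a colimit of representables $\Delta^k$ and using that $(- \times -)$ preserves colimits in each variable, we reduce further to the representable case: it suffices to show that
\[
\iota_{n,k} \colon \mrm{Sp}^n \times \Delta^k \hookrightarrow \Delta^n \times \Delta^k
\]
is a Segal-local equivalence for all $n \geq 2$ and $k \geq 0$.

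The plan for this remaining combinatorial claim---which I expect to be the main obstacle---is to exhibit an explicit filtration
\[
\mrm{Sp}^n \times \Delta^k = F_0 \subseteq F_1 \subseteq \cdots \subseteq F_N = \Delta^n \times \Delta^k
\]
where each step $F_\alpha \hookrightarrow F_{\alpha+1}$ is a pushout along an inner horn inclusion $\Lambda^{n+k}_i \hookrightarrow \Delta^{n+k}$ with $0 < i < n+k$, attaching one non-degenerate $(n{+}k)$-simplex of the prismatic triangulation at a time. The non-degenerate top-dimensional simplices of $\Delta^n \times \Delta^k$ correspond to monotone paths from $(0,0)$ to $(n,k)$ in $[n] \times [k]$, and the shelling order must be chosen so that each new simplex attaches along an inner horn of the previous stage. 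Inner horn inclusions lie in $W$ because they decompose as iterated pushouts of spine inclusions. Exhibiting the correct shelling is essentially Joyal's argument showing closure of inner anodynes under products with arbitrary simplicial sets; alternatively, one could invoke Rezk's verification that the Segal model structure on $\PSh(\simp)$ is cartesian. Once the lemma is in hand, siftedness of $\Nec^{\op}$ will follow by translating product preservation through the necklace formula applied to $\bbL(X \times Y)_1$.
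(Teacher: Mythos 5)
Your reduction chain is sound, and the statement you ultimately need --- that $\mrm{Sp}^n \times \Delta^k \hookrightarrow \Delta^n \times \Delta^k$ is a Segal-local equivalence --- is true, but this is a genuinely different and much heavier route than the paper's. The paper observes that both $(X,Y)\mapsto\bbL(X\times Y)$ and $(X,Y)\mapsto\bbL(X)\times\bbL(Y)$ preserve colimits separately in each variable (the first because $\times$ does so in $\PSh(\simp)$ and $\bbL$ is a left adjoint, the second because the target is cartesian closed), so the comparison map need only be checked on pairs of representables $(\Delta^n,\Delta^m)$; there it is trivially an equivalence because $\Delta^n$, $\Delta^m$ and $\Delta^n\times\Delta^m$ are all already Segal, so $\bbL$ does nothing. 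In short, the paper's bilinearity argument reduces to objects on which $\bbL$ is the identity, whereas your saturation argument reduces to the generating local equivalences, where combinatorics is unavoidable. What your route buys is the a priori stronger statement that the class $W$ of Segal-local equivalences is closed under $-\times Z$ (cartesianness of the localization); what it costs is Joyal's shelling theorem.

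On that remaining step: as literally described, your filtration will not work. Attaching only the non-degenerate top-dimensional $(n+k)$-simplices (the shuffles) along inner horns $\Lambda^{n+k}_i\hookrightarrow\Delta^{n+k}$ cannot carry you from $\mrm{Sp}^n\times\Delta^k$ to $\Delta^n\times\Delta^k$: already for $k=0$, $n=3$, every horn $\Lambda^3_i$ contains $2$-faces not present in $\mrm{Sp}^3$, so the very first attachment is impossible. The correct argument (Joyal's proof that inner anodynes are stable under pushout-product with monomorphisms, applied to $(\mrm{Sp}^n\hookrightarrow\Delta^n)$ and $(\emptyset\to\Delta^k)$) attaches non-degenerate simplices of all intermediate dimensions, along inner horns of varying dimension, in a carefully chosen order. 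Either carry out that induction or cite the pushout-product statement; invoking Rezk's cartesianness of the Segal model structure is also legitimate but essentially outsources the entire lemma. A further point worth a sentence if you write this up: your shelling takes place in simplicial sets, and the inclusion of discrete simplicial spaces into $\PSh(\simp)$ does not preserve arbitrary colimits --- it does preserve pushouts along monomorphisms, which is all you use, but this should be said.
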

\begin{proof}
    The two functors
    \[
        \PSh(\Delta) \times \PSh(\Delta) \longrightarrow \Cat
    \]
    given by $(X,Y) \mapsto \bbL(X \times Y)$ and $\bbL(X) \times \bbL(Y)$ both preserve colimits in both variables.
    Therefore it suffices to check that the natural transformation between them is an equivalence on $(\Delta^n, \Delta^m)$.
    But in this case it is easy to check because $\Delta^n$, $\Delta^m$ and $\Delta^n \times \Delta^m$ are all Segal spaces.
\end{proof}

\begin{lem}\label{lem:nec-sifted}
    The category $\Nec^\op$ is sifted.
\end{lem}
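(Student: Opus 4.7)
The plan is to derive this from \cref{lem:Segalification-products} together with the mapping-space formula of \cref{lem:formula-for-mapping-space}. Recall that a small \category{} $\calD$ is sifted if and only if it is nonempty and $\calD$-indexed colimits in $\calS$ preserve finite products. The category $\Nec$ has $\Delta^0$ as a terminal object (there is a unique map of necklaces from any $N$ to $\Delta^0$), hence its classifying space is contractible; this handles nonemptiness and the empty product. By iteration, it remains to check that $\Nec^\op$-colimits preserve binary products.

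Both sides of the desired identity $\colim_{\Nec^\op}(F \times G) \simeq \colim F \times \colim G$ are separately cocontinuous as functors of $(F, G) \in \PSh(\Nec)^{\times 2}$, so by density of the Yoneda embedding it suffices to verify the equivalence for $F = \Nec(-, N_1)$ and $G = \Nec(-, N_2)$, where $N_1, N_2$ range over necklaces. The right-hand side $|\Nec_{/N_1}| \times |\Nec_{/N_2}|$ is a point, as each $\Nec_{/N_i}$ has a terminal object $(N_i, \id_{N_i})$ and is therefore weakly contractible.

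For the left-hand side, the Grothendieck construction identifies the colimit $\colim_{\Nec^\op}(\Nec(-, N_1) \times \Nec(-, N_2))$ with the classifying space of $\Nec_{/N_1} \times_\Nec \Nec_{/N_2}$. The key observation is that a pair of necklace morphisms $M \to N_1, M \to N_2$ is equivalent to a single map of bi-pointed simplicial sets $M \to N_1 \times N_2$, where the target is bi-pointed by $((\min, \min), (\max, \max))$. This gives a natural identification
\[\Nec_{/N_1} \times_\Nec \Nec_{/N_2} \simeq \Nec_{/(N_1 \times N_2,\, (\min, \min),\, (\max, \max))}.\]

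By \cref{lem:formula-for-mapping-space}, the classifying space of the right-hand side is the fibre of $\Lseg(N_1 \times N_2)_1 \to (N_1 \times N_2)_0 \times (N_1 \times N_2)_0$ over $((\min, \min), (\max, \max))$. Applying \cref{lem:Segalification-products}, this fibre decomposes as the product of the fibres of $\Lseg(N_i)_1 \to N_{i,0} \times N_{i,0}$ over $(\min, \max)$ for $i = 1, 2$. Each such fibre is $\Map_{\Lseg(N_i)}(\min, \max) \simeq *$ since $\Lseg(N_i)$ is a standard simplex, hence the nerve of a linearly ordered set. The main conceptual step in this argument is the reinterpretation of the fibre product $\Nec_{/N_1} \times_\Nec \Nec_{/N_2}$ as a slice category of bi-pointed necklaces; once this is in hand, \cref{lem:Segalification-products} and \cref{lem:formula-for-mapping-space} combine to finish the argument.
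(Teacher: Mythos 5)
Your proof is correct and takes essentially the same route as the paper: reducing to representables turns the binary-product condition into the weak contractibility of $\Nec_{/N_1}\times_{\Nec}\Nec_{/N_2}$, which is exactly the finality-of-the-diagonal criterion the paper checks, and you then make the same identification of this category with the bipointed slice $\Nec_{/(N_1\times N_2)}$ and conclude via \cref{lem:formula-for-mapping-space} and \cref{lem:Segalification-products} just as the paper does. The only cosmetic difference is that you enter through the ``colimits commute with finite products'' characterization of siftedness rather than directly through finality of the diagonal.
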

\begin{proof}
    We need to show that the diagonal functor $\Delta\colon \Nec^\op \to \Nec^\op \times \Nec^\op$ is final.
    Equivalently, we need that for all $A, B \in \Nec$ the slice $\Nec \times_{\Nec^2} \Nec^2_{/(A,B)}$ is weakly contractible.
    This category is equivalent to  the full subcategory 
    $\Nec_{/A\times B} \subseteq \left(\PSh(\simp)_{\Delta^0 \amalg \Delta^0/}\right)_{/A\times B}$
    spanned by necklaces,
    where the product $A \times B$ is taken in the \category{} $\PSh(\simp)_{\Delta^0 \amalg \Delta^0/}$ of bipointed simplicial spaces.
    By \cref{lem:formula-for-mapping-space} the weak homotopy type of this category computes the mapping space
    \[
        |\Nec_{/(A\times B, (a_{\min}, b_{\min}),(a_{\max}, b_{\max}))}|
        \simeq
        \Map_{\cat{A \times B}}((a_{\min}, b_{\min}),(a_{\max}, b_{\max})).
    \]
    Since $\cat{-}$ commutes with products by \cref{lem:Segalification-products}, we may compute
    $\cat{A \times B} \simeq \cat{A} \times \cat{B} = [n] \times [m]$.
    In particular, we see that the mapping space $\Map_{[n] \times [m]}((0,0), (n,m))$ is contractible, which completes the proof.
\end{proof}
\section{Applications}
\subsection{Segalification and right fibrations}

Throughout this section we fix a presentable \category{} $\calV$ and a factorization system $(\calV^L,\calV^R)$.

\begin{defn}
    We say that $X\colon \Delta^\op \to \calV$ is \hldef{right-$\calV^R$-fibered} 
    if $d_0\colon X_n \to X_{n-1}$ is in $\calV^R$ for all $n \ge 1$.
\end{defn}

\begin{obs}\label{rem:R-fibered-Segal}
    A Segal object $X\colon \Dop \to \calV$
    is right-$\calV^R$-fibered if and only if $d_0\colon X_1 \to X_0$ is in $\calV^R$.
    Indeed, morphisms in $\calV^R$ are closed under pullbacks in the arrow category 
    \cite[Proposition 5.2.8.6.(8)]{HTT}
    and when $X$ is Segal we can write $d_0 \colon X_n \to X_{n-1}$ as a pullback in the arrow category of the following cospan 
    \[\begin{tikzcd}
	{X_{n-1}} && {X_0} && {X_1} \\
	{X_{n-1}} && {X_0} && {X_0.}
	\arrow["{d_1 \circ \cdots \circ d_n}", from=1-1, to=1-3]
	\arrow["{=}"', from=1-3, to=2-3]
	\arrow["{=}"', from=1-1, to=2-1]
	\arrow["{d_0}"', from=1-5, to=2-5]
	\arrow["{=}"', from=2-5, to=2-3]
	\arrow["{d_1 \circ \cdots \circ d_n}", from=2-1, to=2-3]
	\arrow["{d_0}"', from=1-5, to=1-3]
    \end{tikzcd}\]
\end{obs}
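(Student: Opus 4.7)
The ``only if'' direction is immediate from the definition, so the entire content lies in showing that $d_0\colon X_1 \to X_0$ being in $\calV^R$ forces $d_0\colon X_n \to X_{n-1}$ to be in $\calV^R$ for every $n\ge 2$. My plan is to exhibit the latter map as a base change of the former and then invoke the stability of the right class of a factorization system under pullbacks.

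The key input is the Segal condition. Rather than unrolling $X_n$ as an iterated fiber product of $X_1$'s, I would use a single two-fold Segal decomposition tailored to $d_0$: namely
\[
    X_n \;\simeq\; X_{\{0,1\}} \underset{X_{\{1\}}}{\times} X_{\{1,\dots,n\}}.
\]
Under this equivalence, the face map $d_0\colon X_n \to X_{n-1}$ corresponds to the projection onto the second factor $X_{\{1,\dots,n\}} \simeq X_{n-1}$. Consequently, I obtain a cartesian square
\[
\begin{tikzcd}
    X_n \ar[r] \ar[d, "d_0"'] & X_1 \ar[d, "d_0"] \\
    X_{n-1} \ar[r] & X_0,
\end{tikzcd}
\]
where the bottom horizontal map is the iterated face map picking out the first vertex. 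This realizes $d_0\colon X_n \to X_{n-1}$ as a pullback of $d_0\colon X_1 \to X_0$ in $\calV$.

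To conclude, I would invoke the fact that the right class $\calV^R$ of a factorization system is closed under base change (as recorded in \cite[Proposition 5.2.8.6]{HTT}), so that $d_0\colon X_n \to X_{n-1}$ is in $\calV^R$ whenever $d_0\colon X_1 \to X_0$ is. The only point requiring genuine care is selecting the right Segal decomposition so that the map in question appears as the projection being pulled back, rather than as the map being pulled along; once the bookkeeping of vertices is arranged correctly, there is no further obstacle.
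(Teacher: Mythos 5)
Your argument is correct and is essentially the paper's: the cartesian square you exhibit via the Segal decomposition $X_n \simeq X_{\{0,1\}} \times_{X_{\{1\}}} X_{\{1,\dots,n\}}$ is exactly the pullback the paper forms in the arrow category, just read as a base change in $\calV$ rather than as a limit of arrows. Both versions then conclude from the stability properties of the right class of a factorization system recorded in \cite[Proposition 5.2.8.6]{HTT}.
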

Under suitable assumptions the necklace formula can be used to show that Segalification preserves right-$\calV^R$-fibered objects.

\begin{prop}\label{cor:right-fibered-usable}
    Suppose $\calV$ and $(\calV^L,\calV^R)$ are such that:
    \begin{enumerate}
        \item 
        sifted colimits in $\calV$ are stable under base change and
        \item 
        The full subcategory $\calV^{(R)}_{/X} \subseteq \calV_{/X}$ on those $Y \to X$ that are in $\calV^R$ is closed under sifted colimits for all $X \in \calV$.
    \end{enumerate}    
    Then Segalification $\Lseg \colon \Fun(\Dop,\calV) \to \Seg_\Dop(\calV)$ preserves right-$\calV^R$-fibered objects.
\end{prop}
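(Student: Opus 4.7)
The plan is to reduce, via \cref{rem:R-fibered-Segal}, to showing that $d_0\colon \Lseg(X)_1 \to \Lseg(X)_0$ is in $\calV^R$, and then to express this map as a sifted colimit of maps that manifestly lie in $\calV^R$.

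By assumption~(1), \cref{prop:Segalification-general-cats} applies, giving the necklace formula
\[
    \Lseg(X)_1 \simeq \colim_{N \in \Nec^\op} X_N,
    \qquad
    X_N \coloneq X_{n_1} \times_{X_0} \cdots \times_{X_0} X_{n_k}
    \text{ for } N = \Delta^{n_1} \vee \cdots \vee \Delta^{n_k},
\]
together with $\Lseg(X)_0 \simeq X_0$ (by \cref{obs:rewriting-Q}). For each necklace $N$ there is a ``last vertex'' map $X_N \to X_0$, obtained by projecting onto the final factor and iterating $d_0$; these assemble into a cocone that presents $d_0\colon \Lseg(X)_1 \to X_0$. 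Since the forgetful functor $\calV_{/X_0} \to \calV$ creates colimits, we may view $d_0$ as the sifted colimit $\colim_{N \in \Nec^\op}(X_N \to X_0)$ in $\calV_{/X_0}$, using \cref{lem:nec-sifted}.

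By assumption~(2), it therefore suffices to show that each structure map $X_N \to X_0$ lies in $\calV^R$. We argue by induction on the number $k$ of beads. When $k = 1$ and $N = \Delta^{n_1}$, the map $X_{n_1} \to X_0$ is an iterated $d_0$, hence in $\calV^R$ by closure of $\calV^R$ under composition (since $X$ is right-$\calV^R$-fibered). For the inductive step we factor the last-vertex map as
\[
    X_{n_1} \times_{X_0} \cdots \times_{X_0} X_{n_k} \longrightarrow X_{n_k} \xrightarrow{\,d_0^{n_k}\,} X_0,
\]
where the second map is in $\calV^R$ as above. The first map is the base change, along the first-vertex map $X_{n_k} \to X_0$, of the last-vertex map $X_{n_1} \times_{X_0} \cdots \times_{X_0} X_{n_{k-1}} \to X_0$, which is in $\calV^R$ by the inductive hypothesis. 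Since the right class of a factorization system is stable under pullback (\cite[Proposition~5.2.8.6]{HTT}), the first map is in $\calV^R$, and composition closure completes the induction.

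The main subtlety I anticipate is the identification of the cocone: one has to verify that $d_0\colon \Lseg(X)_1 \to \Lseg(X)_0$, under the necklace formula, is indeed represented by the last-vertex maps $X_N \to X_0$. This amounts to tracking the (partial) simplicial functoriality of \cref{obs:rewriting-Q} along the inert map $d_0\colon [0] \to [1]$, which restricts the $n = 1$ colimit to the last bead. Once that bookkeeping is settled, the rest is a routine induction relying only on the two standing hypotheses and the standard closure properties of the right class of a factorization system.
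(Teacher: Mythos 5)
Your proposal is correct and follows essentially the same route as the paper: reduce via \cref{rem:R-fibered-Segal} to the map $d_0\colon \Lseg(X)_1 \to \Lseg(X)_0$, write it via the necklace formula as a sifted ($\Nec^\op$-indexed) colimit in $\calV_{/X_0}$ of the last-vertex maps $X_{n_1} \times_{X_0} \cdots \times_{X_0} X_{n_k} \to X_0$, check each of these lies in $\calV^R$ using closure of the right class under composition and base change, and conclude by hypothesis~(2). Your explicit induction on the number of beads and your remark about identifying the cocone merely spell out steps the paper leaves implicit.
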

\begin{proof}
    By \cref{rem:R-fibered-Segal} it suffices to show that if $X\colon \Dop \to \calV$ is right-$\calV^R$-fibered then $d_0\colon \bbL(X)_1 \to \bbL(X)_0$ is in $\calV^R$.
    We claim that for every necklace $N = \Delta^{n_1} \vee \dots \vee \Delta^{n_k}$ the map $(i_*X)(N) \simeq X_{n_1} \times_{X_0} \dots \times_{X_0} X_{n_k} \to X_0$ 
    induced by the inclusion of the terminal vertex $\Delta^0 \to N$ is in $\calV^R$.
    Indeed, when $N$ is a simplex this holds by assumption and the general case follows by taking pullbacks since morphisms in $\calV^R$ are closed under base change and composition.
    
    Using the necklace formula (see \cref{prop:Segalification-general-cats}), we can write $d_0 \colon \bbL(X)_1 \to \bbL(X)_0$ as a colimit
    \[
        \bbL(X)_1 \simeq\colim_{N \in \Nec^\op} (i_*X)(N) 
        \too X_0 = \bbL(X)_0
    \]
    in $\calV_{/X_0}$, of a diagram indexed by $\Nec^\op$, of morphisms $(i_*X)(N) \to X_0$ that lie in $\calV^{(R)}_{/X_0}$.
    Since $\Nec^\op$ is sifted (see \cref{lem:nec-sifted}) and $\calV^{(R)}_{/X_0} \subseteq \calV_{/X_0}$ is closed under sifted colimits, the colimit lies in $\calV^{(R)}_{/X_0}$.
\end{proof}

\subsubsection{Right fibrations of simplicial spaces}
We shall now apply \cref{cor:right-fibered-usable} to right fibrations of simplicial spaces in the sense of Rezk, whose definition we briefly recall. 
\begin{defn}
    A map of simplicial spaces $p\colon  X \to Y$ is called a \hldef{right fibration} if the square
    \[
        \begin{tikzcd}
            X_n \ar[r, "d_0"] \ar[d, "p"'] & 
            X_{n-1} \ar[d, "p"] \\
            Y_n \ar[r, "d_0"] & 
            Y_{n-1}
        \end{tikzcd}
    \]
    is cartesian for all $n \ge 1$.
\end{defn}

The Segalification formula implies the following:
\begin{cor}\label{cor:Lseg-right-fibrations}
    If $p\colon X \to Y$ is a right fibration, then so is the Segalification $\bbL(p)\colon  \bbL(X) \to \bbL(Y)$.
\end{cor}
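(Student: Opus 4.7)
The plan is to apply \cref{cor:right-fibered-usable} to the arrow \category{} $\calV = \mrm{Ar}(\calS) = \Fun(\Delta^1,\calS)$, equipped with the standard orthogonal factorization system whose right class $\calV^R$ consists of the cartesian squares (the matching left class being squares whose bottom leg is an equivalence). Under this identification, a map of simplicial spaces $p\colon X\to Y$ is the same datum as a simplicial object $p_\bullet \in \Fun(\Dop, \calV)$ with $p_n = (X_n \to Y_n)$, and $p$ is a right fibration precisely when $p_\bullet$ is right-$\calV^R$-fibered, since the defining cartesian squares of a right fibration are exactly the statements that $d_0\colon p_n\to p_{n-1}$ lies in $\calV^R$.

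Next I would verify the two hypotheses of \cref{cor:right-fibered-usable}. Sifted colimits and pullbacks in $\calV$ are computed pointwise in $\calS$, so the universality of colimits in $\calS$ (an $\infty$-topos) implies that sifted colimits in $\calV$ are stable under base change. For the second hypothesis, fix $X = (g\colon P \to Q) \in \calV$. The subcategory $\calV^{(R)}_{/X} \subseteq \calV_{/X}$ is equivalent to $\calS_{/Q}$ via the functor that sends $(B \to Q) \in \calS_{/Q}$ to the canonical cartesian square with lower-right corner $g$. Under this identification, a pointwise sifted colimit of cartesian squares over $X$ is again cartesian, because pulling back along $g$ preserves sifted colimits in $\calS$; hence $\calV^{(R)}_{/X}$ is closed under sifted colimits in $\calV_{/X}$.

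Finally, I would observe that Segalification in $\Fun(\Dop,\calV)$ is computed arrow-wise: the necklace formula from \cref{prop:Segalification-general-cats} involves only iterated pullbacks and $\Nec^\op$-colimits, both of which are computed pointwise in $\mrm{Ar}(\calS)$. Hence the Segalification of $p_\bullet$, viewed as a map in $\calV$, is literally $\bbL(p)\colon \bbL(X)\to \bbL(Y)$. Applying \cref{cor:right-fibered-usable} to $p_\bullet$ then concludes that $\bbL(p)$ is right-$\calV^R$-fibered, i.e., a right fibration. The main step requiring care is the second hypothesis, which ultimately rests on universality of sifted colimits in $\calS$; the rest of the argument is formal bookkeeping through the arrow-category reformulation.
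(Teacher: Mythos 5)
Your proposal is correct and follows essentially the same route as the paper: both apply \cref{cor:right-fibered-usable} to $\calV = \Ar(\calS)$ with the factorization system whose right class is the cartesian squares, identify right fibrations with right-$\calV^R$-fibered objects of $\Fun(\Dop,\Ar(\calS))$, and verify the two hypotheses via universality of colimits in $\calS$. Your write-up merely spells out the verification of the second hypothesis (via the equivalence $\calV^{(R)}_{/X}\simeq\calS_{/Q}$) in more detail than the paper does.
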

\begin{proof}
    The target map $t \colon \Ar(\calS) \to \calS$ is a cartesian fibration and thus, by the opposite of \cite[Example 5.2.8.15]{HTT},
    we have a factorization system on $\Ar(\calS)$ whose right part $\Ar(\calS)^\mrm{cart} \subseteq \Ar(\calS)$ consist of the cartesian edges, equivalently pullback squares. 
    (The left part consists of morphisms which induce equivalence on the target.)
    Note that $(p\colon X \to Y) \in \Ar(\PSh(\simp)) = \Fun(\Dop,\Ar(\calS))$ is right-$\Ar(\calS)^\mrm{cart}$-fibered if and only if it is a right fibration, 
    hence it suffices to verify the conditions of \cref{cor:right-fibered-usable} for $\calV = \Ar(\calS)$ equipped with the aforementioned factorization system.
    The first condition holds since colimits in $\Ar(\calS)$ are stable under base change. 
    The second condition holds since colimits in $\calS$ are stable under base change.
\end{proof}

\begin{rem}
    Examination of the proof of \cref{cor:Lseg-right-fibrations} shows that the same result holds if $\calS$ is replaced with any presentable \category{} $\calV$ in which sifted colimits are stable under base change.
\end{rem}

Recall that the nerve $\hldef{\xN_\bullet} \colon \Cat \to \PSh(\simp)$ is fully faithful and its essential image is precisely the complete Segal spaces.
We write $\hldef{\cat{-}} \colon \PSh(\simp) \to \Cat$ for the left adjoint of $\xN_\bullet$.
We let $\hldef{\Lcss}\colon \PSh(\simp) \to \PSh(\simp)$ denote the localization onto the complete Segal spaces.
With this notation we have for any $X \in \PSh(\simp)$ a canonical equivalence $\xN_\bullet \cat{X} \simeq \Lcss X$.
A model-categorical proof of the following proposition was given by Rasekh \cite[Theorem 4.18 and 5.1]{Rasekh17}.

\begin{prop}\label{prop:right-fibration-invariance}
    For any simplicial space $X$ there is an adjoint equivalence of \categories{}:
    \[\begin{tikzcd}
	{\Lseg \colon \PSh(\simp)^{\rm r-fib}_{/X}} && {(\Seg_{\Dop})^{\rm r-fib}_{/\Lseg(X)} \colon X \times_{\Lseg(X)}(-)}
	\arrow[""{name=0, anchor=center, inner sep=0}, shift left=2, from=1-1, to=1-3]
	\arrow[""{name=1, anchor=center, inner sep=0}, shift left=2, from=1-3, to=1-1]
	\arrow["\simeq"{description}, draw=none, from=0, to=1]
    \end{tikzcd}\]
\end{prop}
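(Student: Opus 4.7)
The plan is to realize the claimed pair of functors as an induced adjunction on slice categories that restricts to right fibrations, and then to show via a level-$0$ criterion that the unit and counit are equivalences.

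First, the reflective localization $\Lseg \dashv i$ formally induces, for each $X \in \PSh(\simp)$, an adjoint pair on slices
\[
\Lseg \colon \PSh(\simp)_{/X} \adj (\Seg_{\Dop})_{/\Lseg(X)} :\! X \times_{\Lseg(X)}(-),
\]
where fully-faithfulness of $i$ identifies $X \to i\Lseg(X)$ with the Segalification unit, so that the right adjoint may be described as pullback along this unit. I would then check that this adjunction restricts to the subcategories of right fibrations: the left adjoint does so by \cref{cor:Lseg-right-fibrations}, while the right adjoint does so because right fibrations are stable under arbitrary pullback (by the pasting law for pullback squares).

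The key structural observation is that any right fibration $p\colon Y \to W$ satisfies $Y_n \simeq Y_0 \times_{W_0} W_n$ naturally in $Y$, where $W_n \to W_0$ is the iterated face map $d_0^n$; this follows by induction on $n$ directly from the definition. Consequently, a morphism of right fibrations over a fixed base is an equivalence if and only if it is one at level $0$. Moreover, the necklace formula yields $\Lseg(X)_0 \simeq X_0$ (the $n=0$ case of the formula in \cref{obs:rewriting-Q}, where the empty indexed product is $X_0$), so the Segalification unit $X \to \Lseg(X)$ is the identity at level $0$. Therefore, the unit $Y \to X \times_{\Lseg(X)} \Lseg(Y)$ of the slice adjunction is, at level $0$, the canonical identification $Y_0 \xrightarrow{\simeq} X_0 \times_{X_0} Y_0$, and the counit $\Lseg(X \times_{\Lseg(X)} Z) \to Z$ is, at level $0$, the identity on $Z_0$. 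Both are thus equivalences when restricted to right fibrations.

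The main obstacle in this approach is essentially absent: the heavy lifting, that $\Lseg$ preserves right fibrations, is already done in \cref{cor:Lseg-right-fibrations} (which itself relied on the necklace formula via \cref{cor:right-fibered-usable}). The remaining work is standard manipulation of induced slice adjunctions together with the level-$0$ rigidity of right fibrations, both of which are elementary.
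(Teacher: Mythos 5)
Your proposal is correct and follows essentially the same route as the paper's proof: restrict the slice adjunction induced by $\Lseg \dashv i$ to right fibrations using \cref{cor:Lseg-right-fibrations} and stability of right fibrations under pullback, then verify the unit and counit are equivalences by combining $\Lseg(X)_0 \simeq X_0$ with the level-$0$ rigidity of maps of right fibrations (your observation $Y_n \simeq Y_0 \times_{W_0} W_n$ is exactly \cref{lem:equivalence-of-right-fibrations}). The only point the paper makes explicitly that you leave implicit is that a right fibration over a Segal space is automatically Segal, which identifies $(\Seg_{\Dop})^{\rm r-fib}_{/\Lseg(X)}$ with $\PSh(\simp)^{\rm r-fib}_{/\Lseg(X)}$; this is a minor addition, not a gap in your argument.
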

\begin{proof}
    Combining \cref{cor:Lseg-right-fibrations} and \cref{lem:equivalence-of-right-fibrations}, we learn that if $E \to X$ is a right fibration and $X$ is Segal, then $E$ is also Segal.
    We claim that $X \times_{\Lseg(X)} (-) \colon (\Seg_{\Dop})^{\rm r-fib}_{/\Lseg(X)}=\PSh(\,simp)^{\rm r-fib}_{/\Lseg(X)} \to  \PSh(\simp)^{\rm r-fib}_{/X}$ is right adjoint to the functor $\Lseg \colon \PSh(\simp)^{\rm r-fib}_{/X} \to  (\Seg_{\Dop})^{\rm r-fib}_{/\Lseg(X)}$ afforded by \cref{cor:Lseg-right-fibrations}.
    Indeed for any $(E \to X) \in \PSh(\simp)^{\rm r-fib}_{/X}$ and $(E' \to \Lseg(X)) \in (\Seg_{\Dop})^{\rm r-fib}_{/\Lseg(X)}$ we have
    \[\Map_{/X}(E,X \times_{\Lseg(X)}E') \simeq \Map_{/\Lseg(X)}(E,E') \simeq \Map_{/\Lseg(X)}(\Lseg(E),E').\]
    It remains to check that the unit and counit, given respectively by $E \to X \times_{\Lseg(X)} \Lseg(E)$ and $\Lseg(X \times_{\Lseg(X)} E') \to E'$, are equivalences. 
    Since $\Lseg$ does not affect the $0$-simplices, both maps evaluate to equivalences at $[0]$.
    The claim now follows from
    \cref{lem:equivalence-of-right-fibrations}.
\end{proof}

\begin{lem}\label{lem:equivalence-of-right-fibrations}
    Suppose $X \to Y$ and $X' \to Y$ are right fibrations and $f\colon X \to X'$ is a map over $Y$.
    Then $f$ is an equivalence if and only if $f_0\colon X_0 \to X_0'$ is.
\end{lem}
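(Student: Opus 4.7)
The ``only if'' direction is immediate, so the content is in the converse. The plan is to argue by induction on $n$ that $f_n \colon X_n \to X'_n$ is an equivalence for every $n \geq 0$; once this is done, $f$ is a levelwise equivalence of simplicial spaces and hence an equivalence.

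The base case $n = 0$ is the hypothesis. For the inductive step, I would use the defining property of a right fibration: because $X \to Y$ is a right fibration, the square
\[
\begin{tikzcd}
X_n \ar[r,"d_0"] \ar[d] & X_{n-1} \ar[d] \\
Y_n \ar[r,"d_0"] & Y_{n-1}
\end{tikzcd}
\]
is cartesian, and similarly for $X' \to Y$. Since $f$ is a map over $Y$, these two pullback squares fit together so that the map $f_n \colon X_n \to X'_n$ is identified with the map on pullbacks $X_{n-1} \times_{Y_{n-1}} Y_n \to X'_{n-1} \times_{Y_{n-1}} Y_n$ induced by $f_{n-1} \colon X_{n-1} \to X'_{n-1}$ on the first factor and the identity on $Y_n, Y_{n-1}$. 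By the inductive hypothesis $f_{n-1}$ is an equivalence, so base change gives that $f_n$ is an equivalence, completing the induction.

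There is no real obstacle here beyond carefully assembling the two pullback squares into a single commutative cube and reading off the description of $f_n$; this is a routine application of the pasting lemma for pullbacks, and the inductive structure is forced by the definition of a right fibration.
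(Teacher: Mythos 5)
Your proof is correct and is essentially the argument in the paper: the paper expresses $f_n$ in one step as the base change of $f_0$ along $(d_0)^n\colon Y_n \to Y_0$, whereas you unwind the same pullback squares inductively one level at a time. The two are the same idea, differing only in presentation.
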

\begin{proof}
    Since $f \colon X' \to X$ is a map of right fibrations the map $f_n\colon X_n \to X_n'$ can be recovered as the base change
    of the map $f_0\colon X_0 \to X_0'$ over $Y_0$ along $(d_0)^n\colon Y_n \to Y_0$.
\end{proof}

\begin{cor}\label{cor:right-fibrations-old}
    For any simplicial space $X$ there is an adjoint equivalence of \categories{}:
    \[\begin{tikzcd}
	{\Lcss \colon \PSh(\simp)^{\rm r-fib}_{/X}} && {(\CSeg_{\Dop})^{\rm r-fib}_{/\Lcss(X)} \colon X \times_{\Lcss(X)}(-)}
	\arrow[""{name=0, anchor=center, inner sep=0}, shift left=2, from=1-1, to=1-3]
	\arrow[""{name=1, anchor=center, inner sep=0}, shift left=2, from=1-3, to=1-1]
	\arrow["\simeq"{description}, draw=none, from=0, to=1]
    \end{tikzcd}\] 
\end{cor}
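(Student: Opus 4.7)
The plan is to combine Proposition~\ref{prop:right-fibration-invariance} with an analogous statement for the Rezk completion of a Segal space. Since $\Lcss \simeq \Lcpl \circ \Lseg$ and $\Lcss(X) = \Lcpl(\Lseg(X))$, Proposition~\ref{prop:right-fibration-invariance} reduces the problem to establishing an adjoint equivalence
\[
    (\Seg_{\Dop})^{\rm r-fib}_{/Y} \;\simeq\; (\CSeg_{\Dop})^{\rm r-fib}_{/\Lcpl(Y)}
\]
for every Segal space $Y$, realized by $\Lcpl$ on the left and $Y \times_{\Lcpl(Y)}(-)$ on the right. Composing the two equivalences then yields the Corollary.

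For this reduced problem, I would follow the same template as the proof of Proposition~\ref{prop:right-fibration-invariance}. First, I would establish that if $E \to Y$ is a right fibration between Segal spaces then $\Lcpl(E) \to \Lcpl(Y)$ is again a right fibration. This can be done via Rezk's explicit model for the completion, namely the filtered colimit of exponentials by the nerves of contractible groupoids, along which the defining pullback condition $E_n \simeq E_{n-1} \times_{Y_{n-1}} Y_n$ is preserved. Second, the adjunction identity $\Map_{/Y}(E, Y \times_{\Lcpl(Y)} E') \simeq \Map_{/\Lcpl(Y)}(\Lcpl(E), E')$ holds formally, in direct analogy with the argument of Proposition~\ref{prop:right-fibration-invariance}. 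Third, one verifies that the unit and counit are equivalences.

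The main obstacle lies in this last step. Unlike $\Lseg$, Rezk completion alters the space of $0$-simplices: the map $Y_0 \to \Lcpl(Y)_0$ is only an effective epimorphism, identifying equivalent objects rather than being an equivalence of spaces. Hence Lemma~\ref{lem:equivalence-of-right-fibrations} does not apply directly. One must strengthen it to say that a map of right fibrations over $\Lcpl(Y)$ is an equivalence if and only if its pullback along the effective epimorphism $Y_0 \to \Lcpl(Y)_0$ is. Granting this strengthening, both unit and counit become equivalences essentially by construction, since after pullback to $Y$ they reduce to the identity transformations of the corresponding right fibrations over $Y$.
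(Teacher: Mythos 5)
Your overall strategy is exactly the paper's: factor $\Lcss \simeq \Lcpl \circ \Lseg$, dispose of the first factor with \cref{prop:right-fibration-invariance}, and reduce to showing that $\Lcpl$ induces an adjoint equivalence on right fibrations over a Segal space. The difference is that the paper handles this second factor entirely by citation (preservation of right fibrations under completion is \cite[Proposition A.21]{HK22}, and the equivalence of slices is \cite[Proposition A.22]{HK22}), whereas you attempt to sketch a direct proof of it.

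Your sketch correctly locates the difficulty (completion changes the $0$-simplices, so \cref{lem:equivalence-of-right-fibrations} cannot be applied verbatim) and proposes the right repair: combine that lemma with conservativity of base change along the effective epimorphism $Y_0 \to \Lcpl(Y)_0$. However, the two steps you pass over quickly are precisely where the content of the cited results lives, and neither is formal. First, Rezk's completion is the geometric realization $\lvert Y^{E(\bullet)}\rvert$ over $\Dop$ (a sifted, not filtered, colimit), and the assertion that the pullback squares $E_n \simeq E_{n-1}\times_{Y_{n-1}} Y_n$ survive this colimit requires a descent argument: one must check that the relevant transformation of diagrams is cartesian, not merely that colimits in $\calS$ are universal. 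Second, the claim that the unit and counit become equivalences ``essentially by construction'' after pullback amounts to showing that the square comparing $E_0 \to \Lcpl(E)_0$ with $Y_0 \to \Lcpl(Y)_0$ is cartesian, which is again a descent computation with the $E(\bullet)$-resolution and is the substance of \cite[Proposition A.22]{HK22}. So the proposal is structurally sound and matches the paper, but as written it replaces the paper's citations with assertions rather than proofs; to be complete you would either need to carry out these two descent arguments or, as the paper does, invoke the results of Haugseng--Kock.
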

\begin{proof}
    From \cref{cor:Lseg-right-fibrations} and \cite[Proposition A.21]{HK22} we learn that right fibrations are preserved by $\Lseg \colon \PSh(\simp) \to \Seg_{\Dop}$ and $\Lcpl \colon \Seg_{\Dop} \to \CSeg_{\Dop}$ respectively so their composite yields a functor
    \[
        \Lcss \colon \PSh(\simp)_{/X}^{\rm r-fib} 
        \xrightarrow{\Lseg}
        \PSh(\simp)_{/\Lseg X}^{\rm r-fib} 
        \xrightarrow{\Lcpl}
        \PSh(\simp)_{/\Lcss X}^{\rm r-fib}.
    \]
    The first is an equivalence by \cref{prop:right-fibration-invariance} and the second by \cite[Proposition A.22]{HK22}.
\end{proof}

Under the equivalence $\xN_\bullet \colon \Cat \simeq \CSeg_\Dop(\calS)$ 
the functor $\cat{-}$ is identified with $\Lcss \simeq  \Lcpl \Lseg$ from which we learn the following:
\begin{cor}\label{cor:right-fibrations}
    The functor $\mbm{C}\colon \PSh(\simp) \to \Cat$ induces
    for any simplicial space $X$ an equivalence
    \[
        \PSh(\simp)_{/X}^{\rm r-fib} \xrightarrow[\simeq]{\mbm{C}(-)} 
        \Catover{\cat{X}}^{\rm r-fib}
        \simeq
        \PSh(\cat{X})
    \] 
\end{cor}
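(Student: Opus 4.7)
The plan is to assemble the statement from Corollary \ref{cor:right-fibrations-old} together with two standard identifications. First I would use the paragraph preceding the corollary: the functor $\mbm{C}\colon \PSh(\simp) \to \Cat$ agrees with $\xN_\bullet^{-1} \circ \Lcss$, where $\xN_\bullet \colon \Cat \xrightarrow{\simeq} \CSeg_\Dop(\calS)$ is the nerve. Since $\xN_\bullet$ is fully faithful and preserves pullbacks, it identifies right fibrations in $\Cat$ with right fibrations of complete Segal spaces, so passing to slices gives an equivalence
\[
\xN_\bullet \colon \Catover{\cat{X}}^{\rm r-fib} \xrightarrow{\simeq} (\CSeg_\Dop)^{\rm r-fib}_{/\Lcss X}.
\]

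Second, I would factor the functor $\mbm{C}(-)$ between slice categories as
\[
\PSh(\simp)^{\rm r-fib}_{/X} \xrightarrow{\Lcss} (\CSeg_\Dop)^{\rm r-fib}_{/\Lcss X} \xrightarrow{\xN_\bullet^{-1}} \Catover{\cat{X}}^{\rm r-fib},
\]
so that the first arrow is an equivalence by Corollary \ref{cor:right-fibrations-old}, and the second is an equivalence by the previous paragraph. Composing gives the first claimed equivalence.

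Finally, for the second equivalence $\Catover{\cat{X}}^{\rm r-fib} \simeq \PSh(\cat{X})$, I would simply cite the standard straightening/unstraightening correspondence for right fibrations over an \category{} (i.e.\ the Grothendieck construction), which identifies right fibrations over $\cat{X}$ with presheaves of spaces on $\cat{X}$.

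There is essentially no obstacle here: all the technical work has already been carried out in Corollary \ref{cor:right-fibrations-old}, whose proof in turn rests on \cref{cor:Lseg-right-fibrations} (Segalification preserves right fibrations), and the remaining steps are formal unpackings of the nerve equivalence and classical straightening.
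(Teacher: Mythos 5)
Your proposal is correct and matches the paper's own (essentially implicit) argument: the paper deduces the corollary directly from \cref{cor:right-fibrations-old} by identifying $\mbm{C}$ with $\Lcss$ under the nerve equivalence $\Cat \simeq \CSeg_\Dop(\calS)$, exactly as you do, with the final identification $\Catover{\cat{X}}^{\rm r-fib} \simeq \PSh(\cat{X})$ being standard straightening. No gaps.
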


\subsubsection{A formula for $\cat{-}$}
Let $X$ be a simplicial space and write $\hldef{\simp_{/X}}$ for its \hldef{\category{} of simplices},
i.e.~the codomain of the associated right fibration $p_X \colon \simp_{/X} \to \simp$.
\cref{cor:right-fibrations} can be used to give  a formula for $\cat{X}$ as a certain localization of $\simp_{/X}$.
To do so we will need the ``last vertex map'' 
$\xN_\bullet(\simp_{/X}) \to X$ (see e.g.~\cite[\S 4]{HK22})
and the ``last vertex functor'' 
$e \colon \simp_{/X} \to \cat{X}$
obtained by applying $\cat{-}$ to it.
Let us write $\hldef{\simp_{\max}} \subseteq \simp$ for the wide subcategory spanned by morphisms which preserve the maximal element.

\begin{prop}
    The "last vertex functor" $e \colon \simp_{/X} \to \cat{X}$
    induces an equivalence of \categories{}
    \[\simp_{/X}[W_X^{-1}] \simeq \cat{X}.\]
    where $W_X \coloneq p_X^{-1}(\simp_{\max}) \subseteq \simp_{/X}$.
\end{prop}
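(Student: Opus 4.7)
My strategy is to first verify that $e$ factors as $\bar{e} \colon \simp_{/X}[W_X^{-1}] \to \cat{X}$, then reduce the claim that $\bar{e}$ is an equivalence to the case of representables $X = \Delta^n$ via a colimit argument, and finally invoke the classical localization result for the simplex category.

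For the factorization, recall that $e$ arises by applying $\mbm{C}$ to the last vertex map $u \colon \xN_\bullet(\simp_{/X}) \to X$. A morphism in $W_X$ covers a last-vertex-preserving $\phi \colon [m] \to [n]$ in $\simp$, i.e.~$\phi(m) = n$, and unwinding definitions $u$ sends the corresponding $1$-simplex of $\xN_\bullet(\simp_{/X})$ to the degenerate $1$-simplex at $\sigma(n) \in X_0$. Hence $e$ sends $W_X$ to identities, yielding the canonical factorization $\bar{e}$.

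For the reduction, both functors transform colimits in $X$ into colimits in $\Cat$. The right-hand side $\cat{-}$ does so as a left adjoint. For the left-hand side, the unstraightening $X \mapsto \simp_{/X}$ is colimit-preserving; moreover $W_X$ is itself the unstraightening of the restriction of $X$ to $\simp_{\max}$, so $X \mapsto W_X$ is likewise colimit-preserving, whence $X \mapsto (\simp_{/X}, W_X)$ preserves colimits into marked $\infty$-categories. The localization functor $\Cat^{\rm mark} \to \Cat$ is itself a left adjoint. Since $\bar{e}$ is natural in $X$ and every simplicial space is a colimit of representables, this reduces the claim to showing $\simp_{/[n]}[(\simp_{/[n]})_{\max}^{-1}] \simeq [n]$ for every $n$.

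This representable case is the classical result of Joyal. A synthetic proof observes that every $\sigma \colon [k] \to [n]$ admits a canonical $W$-morphism from $(\iota_{\sigma(k)} \colon [0] \to [n])$ via the inclusion $[0] \xrightarrow{k} [k]$ of the last vertex, so after localization every object is equivalent to some $\iota_i$. The main obstacle is computing the mapping spaces in the localization and identifying them with $\Map_{[n]}(i, j)$. A clean route, in the spirit of \cref{lem:formula-for-mapping-space}, is to realize both sides as weak homotopy types of slice categories of necklaces and compare directly; an alternative is to bootstrap via Haugseng's treatment of the case when $X$ is a nerve, which already covers $\Delta^n$.
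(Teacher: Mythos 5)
Your proof takes a genuinely different route from the paper's. The paper never reduces to representables: it first observes that the functors $\simp_{/\Delta^0}[W_{\Delta^0}^{-1}] \to \simp_{/X}[W_X^{-1}]$ induced by the points of $X$ are jointly essentially surjective, then identifies $\PSh(\simp_{/X}[W_X^{-1}])$ with right fibrations over $X$ (right fibrations being exactly the $\simp_{\max}$-equifibered objects over $X$) and invokes \cref{cor:right-fibrations} --- the right-fibration invariance result proved via the necklace formula --- to obtain a natural equivalence $\PSh(\simp_{/X}[W_X^{-1}]) \simeq \PSh(\cat{X})$; the abstract \cref{lem:hack} then upgrades this equivalence of presheaf categories to an equivalence $\simp_{/X}[W_X^{-1}] \simeq \cat{X}$. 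Your argument is instead the colimit-and-representables route, essentially the one the introduction attributes to Haugseng and Hebestreit--Steinebrunner. It is viable, and it avoids both \cref{lem:hack} and the right-fibration machinery; what it loses is self-containedness, since the paper's proof is designed precisely to avoid the base case that you end up having to cite.

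Two steps still need to be supplied. First, the assertion that $X \mapsto \simp_{/X}$ (and its marked refinement $X \mapsto (\simp_{/X}, W_X)$) preserves colimits as a functor to $\Cat$ is true but not formal: unstraightening is an equivalence onto right fibrations over $\simp$, and the inclusion of right fibrations into $\Cat_{/\simp}$ is a \emph{right} adjoint, so colimit preservation of the composite to $\Cat$ does not follow from adjunction nonsense. One needs that the category of elements computes an oplax colimit, equivalently that $\simp_{/(-)}$ is left Kan extended from $[n] \mapsto \simp_{/[n]}$ along the Yoneda embedding; this should be cited (e.g.\ Gepner--Haugseng--Nikolaus) or proved. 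Second, the representable case $\simp_{/[n]}[W^{-1}] \simeq [n]$ is the genuine content and your sketch does not establish it: essential surjectivity is indeed easy, but the mapping spaces of the localization are not computed, and the proposed necklace comparison is only gestured at. Citing Haugseng's result for nerves (or Stevenson's for simplicial sets) is legitimate and non-circular, but then the heart of the proposition is outsourced rather than proved; if a self-contained argument is wanted, the paper's route through right fibrations is the cleaner option.
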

\begin{proof}
    First we observe that every object in $\simp_{/X}$ is connected through a zig-zag of last vertex maps to a $0$-simplex and thus the maps
    $\simp_{/\Delta^0}[W_{\Delta^0}^{-1}] \to \simp_{/X}[W_X^{-1}]$
    induced by $\Delta^0 \to X$ are jointly essentially surjective.
    By \cref{lem:hack} it now suffices to construct an equivalence $\PSh(\simp_{/X}[W_X^{-1}]) \simeq \PSh(\cat{X})$ naturally in $X$.
    Right fibrations over $X$ are precisely $\Delta_{\rm max}$-\textit{equifibered} simplicial spaces over $X$ in the sense of \cite[Lemma 4.1.8]{properads}, and thus we have $\PSh(\simp_{/X}[W_X^{-1}]) \simeq \PSh(\simp)_{/X}^{\rm rfib}$.
    Combining with \cref{cor:right-fibrations} yields the desired equivalence $ \PSh(\simp_{/X}[W_X^{-1}]) \simeq \PSh(\cat{X})$.\qedhere
\end{proof}

\begin{lem}\label{lem:hack}
    Let $F\colon \PSh(\simp) \to \Cat$ be a functor such that 
    \begin{enumerate}[(1)]
        \item there is an equivalence $\PSh(F(-)) \simeq \PSh(\cat{-})$ of functors $\PSh(\simp) \to \PrL$, and 
        \item for all $X$ the functors $F(\Delta^0) \to F(X)$ induced by $x \colon \Delta^0 \to X$ are jointly surjective.
    \end{enumerate}
    Then $F$ is naturally equivalent to $\cat{-}$.
    Moreover, any natural transformation $F \Rightarrow \cat{-}$ is an equivalence.
\end{lem}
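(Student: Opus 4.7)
The strategy is to extract from the natural equivalence $\Phi\colon \PSh(F(-)) \simeq \PSh(\cat{-})$ of (1) an equivalence at the level of the underlying \categories{}, using the joint surjectivity condition (2) to detect which objects of the Karoubi completions come from $F(X)$ and $\cat X$ respectively.

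First, I would identify $F(\Delta^0)$: by (1) we have $\PSh(F(\Delta^0)) \simeq \PSh(\cat{\Delta^0}) \simeq \calS$, so the idempotent completion $F(\Delta^0)^{\mrm{kar}}$ is $\ast$, and since $\PSh(\emptyset)$ is terminal rather than $\calS$, $F(\Delta^0)$ is nonempty and hence equivalent to $\ast$. Next, since morphisms in $\PrL$ are colimit-preserving they preserve retracts of representables, i.e.~the full subcategories of atomic objects, so $\Phi$ restricts to a natural equivalence $\Phi^{\mrm{kar}}\colon F(-)^{\mrm{kar}} \simeq \cat{-}^{\mrm{kar}}$ of $\Cat$-valued functors on $\PSh(\simp)$. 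For each $x\colon \Delta^0 \to X$, naturality of $\Phi^{\mrm{kar}}$ together with $F(\Delta^0) \simeq \cat{\Delta^0} \simeq \ast$ forces $\Phi^{\mrm{kar}}_X$ to send the image of $F(x)\colon F(\Delta^0) \to F(X)$ to $x \in X_0$ viewed as an object of $\cat X$. By condition (2) the objects of $F(X)$ are exhausted by such images, and tautologically every object of $\cat X$ corresponds to some $x \in X_0$. Combined with the fully-faithfulness of the inclusions $F(X) \hookrightarrow F(X)^{\mrm{kar}}$ and $\cat X \hookrightarrow \cat X^{\mrm{kar}}$ and of $\Phi^{\mrm{kar}}_X$ itself, this restricts $\Phi^{\mrm{kar}}_X$ to an equivalence $F(X) \simeq \cat X$, natural in $X$.

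For the moreover statement, by the first part we may assume $F = \cat{-}$ and show that any natural endomorphism $\eta\colon \cat{-} \Rightarrow \cat{-}$ is an equivalence. Since $\cat{-}$ is the left Kan extension of the inclusion $[-]\colon \simp \hookrightarrow \Cat$ along Yoneda, one has
\[
    \Map_{\Fun(\PSh(\simp), \Cat)}(\cat{-}, \cat{-}) \simeq \Map_{\Fun(\simp, \Cat)}([-], [-]).
\]
Any $\eta'\colon [-] \Rightarrow [-]$, restricted along $d^i\colon [0] \to [n]$, forces $\eta'_{[n]}(i) = i$ on objects, and since $[n]$ is a poset this gives $\eta'_{[n]} = \id$. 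As mapping spaces into posets in $\Cat$ are discrete, the right-hand side is a singleton containing only the identity, so $\eta$ is the identity, hence an equivalence.

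The main difficulty lies in the middle step: bridging the abstract equivalence of presheaf categories to the concrete statement that objects of $F(X)$ correspond, inside the common Karoubi completion, to objects of $\cat X$. Condition (2) is essential here, as without it $F(X)$ could in principle hit a proper subcategory of $\cat X^{\mrm{kar}}$ not equal to $\cat X$.
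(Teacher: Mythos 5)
Your proposal is correct and follows essentially the same route as the paper: restrict the equivalence of presheaf categories to atomic objects to get a natural equivalence of idempotent completions, use condition (2) to identify $F(X)$ and $\cat{X}$ as the joint essential images of the maps from the point, and reduce the ``moreover'' clause to the absence of nontrivial endomorphisms of the inclusion $\simp \hookrightarrow \Cat$ via the left Kan extension description of $\cat{-}$. The only (harmless) imprecision is the claim that colimit-preserving functors preserve atomic objects --- this needs that $\Phi$ is an equivalence, which it is.
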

\begin{proof}
    The idempotent completion $\calC^{\rm idem}$ of any \category{} $\calC$ can be constructed as the full subcategory of atomic\footnote{An object $c \in \calC$ is called \hldef{atomic} (or completely compact in \cite{HTT}) if the copresheaf $\Map_\calC(c,-)$ preserves colimits.} objects in $\PSh(\calC)$ \cite[Proposition 5.1.6.8]{HTT}. 
    Restricting the equivalence from (1) to atomic objects yields a natural equivalence
    $\alpha_X\colon \cat{X}^{\rm idem} \simeq F(X)^{\rm idem}$
    of functors $\PSh(\simp) \to \Cat$.
    In particular we have $* \simeq \cat{\Delta^0}^{\rm idem} \simeq F(\Delta^0)^{\rm idem}$. 

    The joint essential image of the functors $* = \cat{\Delta^0}^{\rm idem} \to \cat{X}^{\rm idem}$ induced from the maps $\Delta^0 \to X$ is precisely $\cat{X} \subset \cat{X}^{\rm idem}$,
    and by (2) the joint essential image of $* = F(\Delta^0)^{\rm idem} \to F(X)^{\rm idem}$ is $F(X) \subset F(X)^{\rm idem}$.
    Since $\alpha$ is a natural equivalence, both it and its inverse must preserve these subcategories and thus $F(-) \simeq \cat{-}$.

    It remains to show every endomorphism of $\cat{-}$ is an equivalence.
    Since $\cat{-}$ is the left Kan extension of the inclusion $\simp \hookrightarrow \Cat$ along the Yoneda embedding $\simp \hookrightarrow \PSh(\simp)$, it suffices to observe that $\Id_{\simp}$ admits no non-trivial endomorphisms.
\end{proof}

\subsection{Segalification for \texorpdfstring{$(\infty,n)$-categories}{n-categories}}

By iterating the Segalification formula one can also obtain formulas for the Segalification for $(\infty,n)$-categories.
We recall the definition of $n$-fold Segal spaces due to Barwick \cite{Barwick-nfold}.
See \cite[Definition 2.2 and 2.4]{CS19} and \cite{haugseng2018equivalence} for a reference.

\begin{defn}
    An $n$-fold simplicial space $X \colon \simp^{\op, n} \to \calS$ is
    \begin{itemize}
        \item  \hldef{reduced} if for every $k \ge 0$ and $m_1,\dots,m_k \in \mbb{N}$ the $(n-k-1)$-fold simplicial space $X_{m_1,\dots,m_k,0,\bullet,\dots,\bullet}$ is constant.
        We denote by $\PSh^{r}(\simp^{\times n})$ the full subcategory spanned by reduced objects.
        \item an \hldef{$n$-uple Segal space} 
        if it is Segal in each coordinate,
        that is, if for every $k \ge 0$ and $m_1,\dots,m_{k-1},  m_{k+1}, \dots, m_n \in \mbb{N}$ the simplicial space $X_{m_1,\dots,m_{k-1}, \bullet, m_{k+1},\dots,m_n}$ is a Segal space.
        \item an \hldef{$n$-fold Segal space} 
        if it is an $n$-tuple Segal space and reduced.
        We denote by $\Seg_{\Dop}^{\rm n-fold} \subset \PSh^{r}(\simp^{\times n})$ the full subcategory spanned by the $n$-fold Segal spaces.
    \end{itemize}
\end{defn}

As we briefly explained in the introduction, complete $n$-fold Segal spaces model $(\infty,n)$-categories.
We will not discuss the issue of completeness here,
but rather our goal will be to give a formula for the Segalification of reduced $n$-fold simplicial spaces.
For $1 \le j \le n$ we denote by $\hldef{\bbL_j} \colon \PSh(\simp^{\times n}) \to \PSh(\simp^{\times n})$ the Segalification functor in the $j$-th coordinate.

\begin{lem}\label{lem:sifted-colimit-of-Segal}
    Suppose $F\colon K \to \PSh(\simp)$ is a diagram of simplicial spaces such that $K$ is sifted, each $F(k)$ is a Segal space, and the diagram $F(-)_0\colon K \to \calS$ is constant.
    Then the colimit $\colim\limits_{k \in K} F(k)$ is a Segal space.
\end{lem}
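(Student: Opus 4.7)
The plan is to reduce the Segal condition for $X \coloneq \colim_{k \in K} F(k)$ to the fact that sifted colimits in $\calS$ commute with finite fiber products over a fixed base. Colimits in $\PSh(\simp)$ are computed levelwise, so $X_m \simeq \colim_k F(k)_m$ for every $m$; in particular $X_0 \simeq A$, where $A$ denotes the common value of the constant diagram $F(-)_0$. Since each $F(k)$ is Segal with $F(k)_0 = A$, the Segal maps $F(k)_n \xrightarrow{\simeq} F(k)_1^{\times_A n}$ are natural in $k$ (being the evaluation of a natural transformation of functors $\PSh(\simp) \to \calS$ at $F(k)$), so after passing to colimits the Segal condition for $X$ reduces to showing that the canonical comparison
\[
\colim_{k \in K} F(k)_1^{\times_A n} \xrightarrow{\;\simeq\;} \left(\colim_{k \in K} F(k)_1\right)^{\times_A n}
\]
is an equivalence.

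For this I would use two ingredients. First, colimits in $\calS$ are universal, so for any fixed $G \in \calS_{/A}$ the functor $(-)\times_A G$ preserves colimits; iterating this $n$ times rewrites the right-hand side as
\[
\colim_{(k_1,\dots,k_n) \in K^{\times n}} F(k_1)_1 \times_A \cdots \times_A F(k_n)_1.
\]
Second, since $K$ is sifted, the diagonal $\Delta_2\colon K \to K \times K$ is final by definition, and inductively the $n$-fold diagonal $\Delta_n\colon K \to K^{\times n}$ is final for every $n \ge 2$ (writing $\Delta_n$ as an iterated composite of diagonals). Restricting the $K^{\times n}$-indexed colimit along $\Delta_n$ then identifies it with $\colim_k F(k)_1^{\times_A n}$, yielding the required comparison.

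Since the argument is essentially a formal manipulation combining universality of colimits with finality of iterated diagonals for sifted diagrams, I do not expect any substantial obstacles. An alternative packaging of the same content is to observe that $\calS_{/A}$ is an $\infty$-topos, and in any $\infty$-topos sifted colimits commute with finite products; this is exactly the comparison displayed above, specialized to products in $\calS_{/A}$.
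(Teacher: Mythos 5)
Your proof is correct, and it rests on the same two pillars as the paper's argument---siftedness of $K$ and universality of colimits in $\calS$---but it organizes them around a different form of the Segal condition. The paper uses the equivalent binary reformulation, that $X_n \to X_0 \times_{X_0\times X_0}(X_{n-1}\times X_1)$ be an equivalence for $n\ge 2$, so it only ever has to commute the colimit past a single pullback whose cospan has constant feet; for that, the defining finality of the binary diagonal $K\to K\times K$ together with base change along the one map $A\to A\times A$ suffices. You work directly with the $n$-ary Segal map $X_n\to X_1\times_{X_0}\cdots\times_{X_0}X_1$, which is why you need finality of the $n$-fold diagonal $K\to K^{\times n}$; that is standard but does require the small induction you indicate (final functors are closed under products), and your closing reformulation---$\calS_{/A}$ is an $\infty$-topos, and sifted colimits commute with finite products in any $\infty$-topos---is the cleanest way to package it. The remaining steps you use are all sound: $X_0\simeq A$ because sifted categories are weakly contractible, and the Segal map of the colimit factors as you claim because it is restriction along the spine inclusion, hence natural in the simplicial space. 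Net effect: the paper's binary decomposition is marginally more economical in what it asks of siftedness, while yours is the more direct translation of ``sifted colimits commute with finite limits over a constant base''; both are complete.
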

\begin{proof}
    A simplicial space $X$ is Segal if and only if the canonical map $X_n \to X_0 \times_{(X_0 \times X_0)} (X_{n-1} \times X_1)$ is an equivalence for all $n\ge 2$.
    In the case of $X = \colim_{k \in K} F(k)$ we therefore want show that the outside rectangle in the following diagram is cartesian:
\[\begin{tikzcd}
	{\colim\limits_{k \in K} F(k)_n} & {\colim\limits_{k \in K} F(k)_{n-1} \times F(k)_1 } & {\colim\limits_{k \in K} F(k)_{n-1} \times \colim\limits_{k \in K} F(k)_1 } \\
	{\colim\limits_{k \in K} F(k)_0} & {\colim\limits_{k \in K} F(k)_0 \times F(k)_0} & {\colim\limits_{k \in K} F(k)_0 \times \colim\limits_{k \in K} F(k)_0}
	\arrow[from=1-1, to=2-1]
	\arrow[from=1-3, to=2-3]
	\arrow[from=1-1, to=1-2]
	\arrow["\simeq", from=1-2, to=1-3]
	\arrow["\Delta", from=2-1, to=2-2]
	\arrow["\simeq", from=2-2, to=2-3]
	\arrow[from=1-2, to=2-2]
\end{tikzcd}\]
    The right horizontal maps are equivalences because $K$ is sifted
    and hence it suffices to consider the left square.
    This square is a colimit of the cartesian squares that we have because $F(k)$ is Segal for all $k$.
    Moreover, the bottom row of the squares is a constant functor in $k$ by assumption.
    So it follows that the colimit of the square is still cartesian because colimits in $\calS$ are stable under base change.
    As observed above, this implies that $\colim_{k \in K} F(k)$ is Segal as claimed.
\end{proof}

\begin{lem}\label{lem:L_j-keeps-Segal}
    Let $X_{\bullet, \dots \bullet}$ be a reduced $n$-fold simplicial space, then
    \begin{enumerate}
        \item $(\bbL_j X)_{\bullet, \dots, \bullet}$ is reduced for all $j$, and 
        \item if $X_{\bullet, \dots, \bullet}$ satisfies the Segal condition in the first $(j-1)$-coordinates, 
        then $(\bbL_j X)_{\bullet, \dots, \bullet}$ satisfies the Segal condition in the first $j$ coordinates.
    \end{enumerate}
\end{lem}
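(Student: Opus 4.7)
The plan is to apply the necklace formula (\cref{prop:Segalification-general-cats}) in coordinate $j$, viewing an $n$-fold simplicial space as a simplicial object in the presentable \category{} $\calV = \PSh(\simp^{\times(n-1)})$. Since $\calV$ is an $\infty$-topos, all colimits there are universal, so the hypotheses of \cref{prop:Segalification-general-cats} are satisfied and we obtain
\[
    (\bbL_j X)_{m_1, \dots, 1, \dots, m_n} \simeq \colim_{\Delta^{n_1} \vee \cdots \vee \Delta^{n_r} \in \Nec^\op} X_{m_1, \dots, n_1, \dots, m_n} \underset{X_{m_1, \dots, 0, \dots, m_n}}{\times} \cdots \underset{X_{m_1, \dots, 0, \dots, m_n}}{\times} X_{m_1, \dots, n_r, \dots, m_n}.
\]
For other values of $m_j$ we read $(\bbL_j X)_{\dots, m_j, \dots}$ as an iterated fiber product of the $m_j = 1$ case over the $m_j = 0$ case (where $(\bbL_j X)_{\dots, 0_j, \dots} = X_{\dots, 0_j, \dots}$), since $\bbL_j X$ is Segal in coordinate $j$ by construction. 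We also use that $\Lseg$ is the identity on constant simplicial spaces (constants are trivially Segal) and fixes the $0$-simplex.

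For part (1), fix $k$ and $m_1, \dots, m_k$ and analyse $(\bbL_j X)_{m_1, \dots, m_k, 0, \bullet, \dots, \bullet}$ by splitting on the position of $j$ relative to $k+1$. If $j = k+1$, the value equals $X_{m_1, \dots, m_k, 0, \bullet, \dots, \bullet}$ since $\Lseg$ fixes $0$-simplices, which is constant by reducedness. If $j > k+1$, the simplicial space $X_{\dots, 0_{k+1}, \dots, \bullet_j, \dots}$ being Segalified in direction $j$ is already constant in $\bullet_j$ by reducedness (the $j$-th coordinate trails the $0$), hence is Segal and unchanged by $\bbL_j$; the result is then $X_{\dots, 0_{k+1}, \dots}$, constant in trailing $\bullet$'s by reducedness. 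If $j < k+1$, then for each value of $\bullet_j$ the value $X_{\dots, \bullet_j, \dots, 0_{k+1}, \bullet, \dots, \bullet}$ is independent of the trailing $\bullet$'s by reducedness, so the same holds for its image under $\Lseg$, giving the desired independence of $(\bbL_j X)$ in the trailing $\bullet$'s.

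For part (2), Segal in coordinate $j$ is immediate by construction. For coordinate $i < j$, by the Segal condition in coord $j$ and closure of Segal objects under limits, it suffices to verify Segal in coord $i$ at $m_j \in \{0, 1\}$. The case $m_j = 0$ reduces to $X$ being Segal in coord $i$. For $m_j = 1$, I plan to apply \cref{lem:sifted-colimit-of-Segal} to the $\Nec^\op$-indexed diagram of simplicial spaces (in direction $i$) appearing in the necklace formula. Each entry is Segal in coord $i$ — the $X$-pieces are so by hypothesis and Segal objects are closed under limits — and the $0$-simplex-in-direction-$i$ diagram is constant in the necklace variable, because reducedness of $X$ forces $X_{\dots, 0_i, \dots, n_l, \dots}$ to be independent of $n_l$ (indeed of all coordinates to the right of $i$). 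Combined with siftedness of $\Nec^\op$ (\cref{lem:nec-sifted}), \cref{lem:sifted-colimit-of-Segal} delivers the Segal condition in coord $i$. The main subtlety throughout is pinpointing where reducedness of $X$ enters: it is exactly what collapses the relevant auxiliary diagrams to constant ones, both in the case analysis for (1) and in verifying the hypothesis of \cref{lem:sifted-colimit-of-Segal} for (2).
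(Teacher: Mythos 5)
Your proposal is correct and follows essentially the same route as the paper: the same three-way case split on the position of $j$ relative to the reducing $0$ for part (1) (with the $j<k+1$ case resting, as in the paper, on constant multi-simplicial spaces being closed under the limits and colimits computing $\Lseg$), and for part (2) the same reduction to $m_j\in\{0,1\}$ followed by an application of \cref{lem:sifted-colimit-of-Segal} to the necklace diagram, using \cref{lem:nec-sifted} and reducedness to see that the diagram of $0$-simplices is constant. The paper merely writes out part (2) for $n=2=j$ and declares the general case analogous, which your argument carries out.
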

\begin{proof}
    Claim (1):
    We need to check that $(\bbL_j X)_{m_1, \dots, m_k, 0, \bullet, \dots, \bullet}$ is a constant simplicial space.
    For $k<j-1$ this is true because constant simplicial spaces are Segal and hence Segalifying in the $j$th coordinate does not change $X_{m_1, \dots, m_k, 0, \bullet, \dots, \bullet}$.
    For $k=j-1$ this is true because Segalifying never changes the $0$-simplices.
    For $k\ge j$ consider the simplicial object 
    $Y\colon \Dop \to \PSh(\simp^{\times \{k+2, \dots, n\}})$
    defined by sending $l$ to $X_{m_1, \dots, m_{j-1},l,m_{j+1}, \dots, m_k, 0, \bullet, \dots \bullet}$.
    By assumption $Y_l$ is a constant $(n-k-1)$-fold simplicial space for all $l$.
    Since the full subcategory of those $(n-k-1)$-fold simplicial spaces that are constant is closed under all limits and colimits,
    it follows that $(\Lseg Y)_l$ is still constant for all $l$.

    Claim (2):
    To simplify notation, we will assume that $n=2=j$;
    the general case is analogous.
    Suppose that $X_{\bullet, \bullet}$ satisfies the Segal condition in the first coordinate.
    It suffices to show that $\bbL_2 X_{\bullet, \bullet}$ still satisfies the Segal condition in the first coordinate.
    In other words, we need to show that $(\bbL_2 X)_{\bullet, l}$ is a Segal space for all $l$.
    By the Segal condition in the second coordinate, it suffices to do so for $l=0, 1$.
    For $l=0$ there is nothing to show since Segalification does not change the $0$-simplices.
    For $l=1$ we have the necklace formula
    \[
        (\bbL_2 X)_{\bullet, 1} \simeq 
        \colim_{\Delta^{n_1}\vee \dots \vee \Delta^{n_k} \in \Nec^\op}  X_{\bullet, n_1} \times_{X_{\bullet, 0}} \dots \times_{X_{\bullet, 0}} X_{\bullet, n_k}
    \]
    where the pullbacks and colimit are computed in simplicial spaces.
    To complete the proof it suffices to show that the diagram on the right hand side, whose colimit we are taking, satisfies the hypotheses of \cref{lem:sifted-colimit-of-Segal}.
    The indexing category $\Nec^\op$ is sifted by \cref{lem:nec-sifted} and each of the terms in the diagram is a Segal space since Segal spaces are closed under pullbacks.
    It remains to observe that the diagram of $0$-simplices is constant, as its value on any necklace $N = \Delta^{n_1} \vee \dots \vee \Delta^{n_k}$ is
    \[
        X_{0, n_1} \times_{X_{0, 0}} \dots \times_{X_{0, 0}} X_{0, n_k}
        \simeq X_{0, 0} \times_{X_{0, 0}} \dots \times_{X_{0, 0}} X_{0, 0}
        \simeq X_{0,0}
    \]
    by the reduced assumption.
\end{proof}

\begin{prop}\label{prop:iterated-segal-formula}
    The left adjoint to the full inclusion of $n$-fold Segal space into reduced $n$-fold simplicial spaces may be computed as
    \[
       \bbL = \bbL_n \circ \dots \circ \bbL_1 \colon
       \Seg_{\Dop}^{n\rm -fold} \adj \PSh^{r}(\simp^{\times n}) 
       :\! \mrm{inc}
    \]
\end{prop}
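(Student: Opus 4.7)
My plan is to verify two things: (a) that the composition $\bbL_n \circ \cdots \circ \bbL_1$ actually lands in $n$-fold Segal spaces when applied to a reduced $n$-fold simplicial space, and (b) that for any $n$-fold Segal space $Y$, the canonical natural transformation $X \to \bbL_n \cdots \bbL_1 X$ exhibits $\bbL_n \cdots \bbL_1 X$ as the target of the unit of an adjunction, i.e.~induces an equivalence $\Map(\bbL_n \cdots \bbL_1 X, Y) \xrightarrow{\simeq} \Map(X, Y)$.

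For (a), I would proceed by induction on $j$, applying \cref{lem:L_j-keeps-Segal} repeatedly. The base case is trivial: $X$ itself is reduced and vacuously satisfies the Segal condition in the first $0$ coordinates. For the inductive step, assume $\bbL_{j-1} \circ \cdots \circ \bbL_1 X$ is reduced and Segal in coordinates $1, \dots, j-1$. Then part (1) of \cref{lem:L_j-keeps-Segal} guarantees that applying $\bbL_j$ preserves reducedness, while part (2) upgrades the Segal condition to hold in coordinates $1, \dots, j$. After $n$ iterations we obtain a reduced $n$-fold simplicial space which is Segal in all coordinates, i.e.~an object of $\Seg_\Dop^{n\rm\text{-}fold}$.

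For (b), the key observation is that each $\bbL_j\colon \PSh(\simp^{\times n}) \to \PSh(\simp^{\times n})$ is by construction the left adjoint to the inclusion of those $n$-fold simplicial spaces that are Segal in the $j$th coordinate. An $n$-fold Segal space $Y$ is Segal in every coordinate and hence $\bbL_j$-local for all $j$. We may therefore telescope:
\[
\Map(\bbL_n \circ \cdots \circ \bbL_1 X, Y) \simeq \Map(\bbL_{n-1} \circ \cdots \circ \bbL_1 X, Y) \simeq \cdots \simeq \Map(\bbL_1 X, Y) \simeq \Map(X, Y),
\]
where successive equivalences use $\bbL_j$-locality of $Y$. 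Combined with (a), this identifies $\bbL_n \circ \cdots \circ \bbL_1$ with the left adjoint to the inclusion $\mrm{inc}\colon \Seg_\Dop^{n\rm\text{-}fold} \hookrightarrow \PSh^r(\simp^{\times n})$.

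The genuine content of the proposition is already packaged in \cref{lem:L_j-keeps-Segal}, whose proof, in turn, rests on the necklace formula for $\bbL_j$ and on \cref{lem:sifted-colimit-of-Segal}: the delicate point is that segalifying in coordinate $j$ would \emph{a priori} destroy the Segal condition previously established in coordinates $1,\dots,j-1$, and it is precisely the sifted nature of $\Nec^\op$ together with reducedness (which makes the diagram of $0$-simplices constant) that prevents this. Given \cref{lem:L_j-keeps-Segal}, the remaining argument above is essentially formal.
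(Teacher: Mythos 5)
Your proposal is correct and follows essentially the same route as the paper: the paper likewise telescopes the chain $X \to \bbL_1 X \to \dots \to (\bbL_n \circ \dots \circ \bbL_1)X$ of maps that are local with respect to objects Segal in the relevant coordinate, and inductively invokes \cref{lem:L_j-keeps-Segal} to see that the end result is a reduced $n$-fold Segal space. Your closing remark about where the genuine content lies (the interaction of siftedness of $\Nec^\op$ with reducedness) also matches the paper's emphasis.
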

\begin{proof}
    For any $n$-fold simplicial space the map
    $Y_{\bullet, \dots, \bullet} \to (\bbL_j Y)_{\bullet,\dots,\bullet}$
    is local with respect to those $n$-fold simplicial spaces that are Segal in the $j$th coordinate.
    Therefore, for any $n$-fold simplicial space $X_{\bullet, \dots, \bullet}$
    all of the maps
    \[
        X_{\bullet, \dots, \bullet} \to
        (\bbL_1 X)_{\bullet, \dots, \bullet} \to 
        (\bbL_2 \circ \bbL_1)(X)_{\bullet, \dots, \bullet} \to 
        \dots \to
        (\bbL_n \circ\dots \circ \bbL_1)(X)_{\bullet, \dots, \bullet} 
    \]
    are local with respect to the full subcategory of $n$-tuple Segal spaces.
    If we assume that $X_{\bullet, \dots, \bullet}$ is also reduced, then it follows by inductively applying \cref{lem:L_j-keeps-Segal} that $(\bbL_j \circ \dots \circ \bbL_1)(X)_{\bullet, \dots, \bullet}$ is reduced and satisfies the Segal condition in the first $j$ coordinates.
    We have therefore shown that the map 
    \[
        X_{\bullet, \dots, \bullet} \to
        (\bbL_n \circ\dots \circ \bbL_1)(X)_{\bullet, \dots, \bullet} 
    \]
    is local with respect to $n$-fold Segal spaces and that its target is an $n$-fold Segal space.
    Consequently, it exhibits the target as the localization onto the full subcategory $\Seg_{\Dop}^{\rm n-fold}$.
\end{proof}

\begin{war}
    In the context of \cref{prop:iterated-segal-formula}
    the order in which the Segalification functors are applied is crucial. 
    It is important to Segalify the $1$-morphisms first, then the $2$-morphisms, and so on.
    If we were to apply $\bbL_2$ first and then $\bbL_1$, 
    it would no longer clear that the result is $\bbL_2$-local as, $\bbL_1$ can break the Segal condition in the second simplicial direction.
\end{war}

\subsection{The Boardman-Vogt tensor product}

In this section we show how to use the Segalification formula to give a new construction
of the Boardman-Vogt tensor product of \operads{}. We begin with a brief recollection on the tensor product of commutative monoids. 

\subsubsection{Recollection on tensor product of commutative monoids}

For an \category{} with products $\calC$ we let $\hldef{\CMon(\calC)} \subset \Fun(\Fin_*, \calC)$ denote the \category{} of commutative monoids in $\calC$, see \cite[\S 1]{gepner-universality}.
In the case $\calC=\calS$ we simply write $\hldef{\CMon} \coloneq \CMon(\calS)$.
We let $\hldef{\SM} \coloneq \CMon(\Cat)$ denote the \category{} of symmetric monoidal \categories{}.
By applying $\CMon(-)$ to the adjunction $\cat{-} \dashv \xN_\bullet$
and using that $\CMon(\PSh(\simp)) \simeq \Fun(\Dop, \CMon)$,
we obtain an adjunction:
\[
    \cat{-}\colon \Fun(\Dop, \CMon)  \adj \SM :\! \xN_\bullet.
\]
The right adjoint here is the \hldef{symmetric monoidal nerve}, which in the $n$th level is given by $\xN_n(\calD) = \Fun([n], \calD)^\simeq$ with the pointwise symmetric monoidal structure.

Let $\calC$ be a cartesian closed presentable \category{}.
Gepner, Groth and Nikolaus \cite{gepner-universality} show that $\CMon(\calC)$ admits a unique symmetric monoidal structure $\otimes$ such that the free functor $\xF\colon \calC \to \CMon(\calC)$ is symmetric monoidal.
Moreover, if $\calC$ and $\calD$ are presentable cartesian closed and $L \colon \calC \to \calD$ is a symmetric monoidal (i.e.~finite product preserving) left adjoint, they show the induced functor $L\colon \CMon(\calC) \to \CMon(\calD)$ is canonically symmetric monoidal \cite[Lemma 6.3.(ii)]{gepner-universality}.
Segalification, completion, and $\cat{-}$ are examples of such $L$. 
Using that $\Seg_{\Dop}(\CMon) \simeq \CMon(\Seg_{\Dop}(\calS))$ (and similarly for complete Segal spaces),
we record this for future use.

\begin{cor}\label{cor:Lseg-and-Lc-symmetric-monoidal}
    All of the functors in the following commutative diagram are canonically symmetric monoidal for the respective tensor product:
    \[\begin{tikzcd}
	{\Fun(\Dop,\CMon) } & {\Seg_\Dop(\CMon)} & {\CSeg_\Dop(\CMon)} & \SM
	\arrow["{\Lcpl}", from=1-2, to=1-3]
	\arrow["{\Lseg}", from=1-1, to=1-2]
	\arrow["{\cat{-}}", shift right=1, curve={height=24pt}, from=1-1, to=1-4]
	\arrow["\simeq", from=1-4, to=1-3]
	\arrow["{\xN_\bullet(-)}"', draw=none, from=1-4, to=1-3]
    \end{tikzcd}\]
\end{cor}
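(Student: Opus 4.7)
The strategy is to reduce the statement to the Gepner--Groth--Nikolaus principle recalled just above the corollary: any finite-product-preserving left adjoint $L \colon \calC \to \calD$ between presentably cartesian closed \categories{} lifts canonically to a symmetric monoidal functor $\CMon(\calC) \to \CMon(\calD)$ for the tensor products of \cite{gepner-universality}. Using the identifications $\Seg_\Dop(\CMon) \simeq \CMon(\Seg_\Dop(\calS))$ and $\CSeg_\Dop(\CMon) \simeq \CMon(\CSeg_\Dop(\calS))$ already noted in the paragraph preceding the corollary, the four arrows in the diagram arise by applying $\CMon(-)$ respectively to $\Lseg \colon \PSh(\simp) \to \Seg_\Dop(\calS)$, $\Lcpl \colon \Seg_\Dop(\calS) \to \CSeg_\Dop(\calS)$, the nerve equivalence $\xN_\bullet \colon \Cat \simeq \CSeg_\Dop(\calS)$, and the composite $\cat{-} \simeq \Lcpl \circ \Lseg$ on the underlying \categories{} of $\calS$-valued presheaves.

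It therefore suffices to verify that each of $\PSh(\simp)$, $\Seg_\Dop(\calS)$, $\CSeg_\Dop(\calS)$, $\Cat$ is presentably cartesian closed and that each underlying functor is a product-preserving left adjoint. This is classical for $\PSh(\simp)$ and $\Cat$, and it will follow for the Segal and complete Segal variants as soon as the corresponding reflections are shown to preserve finite products. For $\Lseg$ product-preservation is precisely \cref{lem:Segalification-products}; for $\Lcpl$ one checks directly that the completeness condition $s_0 \colon X_0 \xrightarrow{\simeq} X_1^{\mrm{eq}}$ is stable under finite products, so complete Segal spaces are closed under products in $\Seg_\Dop(\calS)$ and the reflection $\Lcpl$ preserves products as well. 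The nerve $\xN_\bullet$ is an equivalence whose inverse $\cat{-}$ is manifestly a product-preserving left adjoint, so \cite[Lemma 6.3.(ii)]{gepner-universality} applies to the inverse and $\xN_\bullet$ inherits a symmetric monoidal structure by reversing the equivalence. Commutativity of the resulting diagram of symmetric monoidal functors is automatic from commutativity of the underlying diagram, since $\CMon(-)$ is functorial on product-preserving left adjoints.

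The proof is essentially bookkeeping once the GGN principle and the limit-commutation identifications between $\CMon$ and (complete) Segal spaces are granted. The only genuinely non-formal input is product-preservation of $\Lseg$, supplied by \cref{lem:Segalification-products}; everything else either follows from it by a standard closure-under-limits argument or is immediate from the fact that nerve and completion are defined via pullback conditions stable under finite products.
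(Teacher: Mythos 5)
Your argument is exactly the paper's: the corollary is recorded as an immediate consequence of the Gepner--Groth--Nikolaus principle stated in the preceding paragraph, with $\Seg_\Dop(\CMon)\simeq\CMon(\Seg_\Dop(\calS))$ and its complete analogue, and with \cref{lem:Segalification-products} supplying the only non-formal input. One small caution: your justification that $\Lcpl$ preserves finite products infers this from the closure of complete Segal spaces under products, but closure of the local objects under products does not in general imply that a reflection preserves products (one needs, e.g., that $\bbL$-equivalences are stable under $-\times X$, or Rezk's explicit completion formula $\widehat{X}\simeq|X^{E(\bullet)}|$, which visibly commutes with finite products); the conclusion is correct and is asserted without proof in the paper as well, so this does not affect the overall argument.
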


While the characterisation of the tensor product in \cite{gepner-universality} is an excellent tool for studying the symmetric monoidal structure on $\SM$ as a whole, we will also need a more ``local'' description that gives a universal property for the tensor product of two fixed symmetric monoidal \categories{}.
Below, in \cref{prop:tensor=localised-Day}, we give such a description by closely following \cite[\S 4.3]{shay-HigherMonoids}.

\subsubsection{Some equifibered theory}
A morphism of commutative monoids $f \colon M \to N$ is said to be \hldef{equifibered} if the canonical square
\[\begin{tikzcd}
	{M\times M} & M \\
	{N \times N} & N
	\arrow["f \times f"', from=1-1, to=2-1]
	\arrow["{+}", from=1-1, to=1-2]
	\arrow["{+}", from=2-1, to=2-2]
	\arrow["f",from=1-2, to=2-2]
\end{tikzcd}\]
is cartesian \cite[Definition 2.1.4]{properads}. 
Equifibered maps span a replete subcategory of commutative monoids $\hldef{\CMon^\eqf} \subset \CMon$. 
This notion was introduced in op.~cit.~for the purpose of developing the theory of \properads{}.
A quintessential feature of equifibered maps is that a morphism of free monoids $f \colon \xF(X) \to \xF(Y)$ is equifibered if and only if it is free, i.e.~$f \simeq \xF(g)$ for some map of spaces $g \colon X \to Y$.
Equifibered maps can be thought of as a well-behaved generalization of free maps; for example they form the right part of a factorization system on $\CMon$. 
Further details can be found in \cite[\S 2]{properads}.

\begin{obs}\label{obs:tensor-equifibered}
    Equifibered maps between free monoids are closed under the tensor product. 
    Indeed, the free functor $\xF \colon \calS \to \CMon$ is symmetric monoidal and by \cite[Remark 2.1.7]{properads} induces an equivalence onto the subcategory $\CMon^{\free,\eqf} \subseteq \CMon$ of free monoids and equifibered maps.
\end{obs}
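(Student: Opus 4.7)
The plan is to reduce the claim to two inputs about the free commutative monoid functor $\xF\colon \calS \to \CMon$: first, that $\xF$ is symmetric monoidal, and second, that it induces an equivalence onto the wide subcategory $\CMon^{\free,\eqf} \subseteq \CMon$ of free monoids and equifibered maps between them. Given these, a symmetric monoidal functor sends tensor products in its domain to tensor products in its codomain, so its essential image (as a subcategory) is automatically closed under $\otimes$. Applied here, this means that a tensor of two equifibered maps between free monoids is again of the form $\xF(g \times g')$ and hence equifibered.

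For the first input, I would appeal to the Gepner--Groth--Nikolaus description of the tensor product on commutative monoids already used above: $\xF\colon \calC \to \CMon(\calC)$ is canonically symmetric monoidal for cartesian closed presentable $\calC$, and we apply this with $\calC = \calS$, where the source is equipped with the cartesian product. For the second input, I would quote the property stated in the surrounding text, namely that a morphism of free monoids $\xF(X) \to \xF(Y)$ is equifibered if and only if it is itself free; this is the content of \cite[Remark 2.1.7]{properads}. Together these two facts identify $\CMon^{\free,\eqf}$ with the symmetric monoidal image of $\xF$.

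The statement then follows immediately: given equifibered maps $f\colon \xF(X) \to \xF(Y)$ and $f'\colon \xF(X') \to \xF(Y')$, we can write $f = \xF(g)$ and $f' = \xF(g')$, and since $\xF$ is symmetric monoidal we obtain $f \otimes f' \simeq \xF(g \times g')$, which is again a free map and hence equifibered. The only nontrivial ingredient is the characterisation of equifibered maps between free monoids as precisely the image of $\xF$ on morphisms, and since this is imported from \cite{properads} there is no real obstacle in the present context.
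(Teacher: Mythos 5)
Your argument is correct and is essentially the paper's own: the observation is justified there by exactly the two inputs you cite, namely that $\xF\colon\calS\to\CMon$ is symmetric monoidal and that by \cite[Remark 2.1.7]{properads} it induces an equivalence onto $\CMon^{\free,\eqf}$, so a tensor product of equifibered maps between free monoids is $\xF(g\times g')$ and hence equifibered. You have merely spelled out the deduction that the paper leaves implicit.
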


\begin{defn}
    A simplicial commutative monoid $M \colon \Dop \to \CMon$ is called \hldef{$\otimes$-disjunctive} if it is right-$\CMon^\eqf$-fibered.
\end{defn}

\begin{rem}
    By \cite[Lemma 3.2.15]{properads} the nerve $\xN_\bullet \calC$ of a symmetric monoidal \category{} $\calC \in \SM$ is $\otimes$-disjunctive, 
    if and only if $\calC$ is $\otimes$-disjunctive in the sense of \cite[Definition 3.2.14]{properads}.
    That is, if and only if for all $x, y \in \calC$ the functor
    \[
        \otimes\colon \calC_{/x} \times \calC_{/y} \too \calC_{/x \otimes y}
    \]
    is an equivalence.
\end{rem}

\begin{obs}\label{lem:free-tensor-disjunctive}
    Let $M \colon \Dop \to \CMon$ be $\otimes$-disjunctive. 
    Then $M$ is level-wise free if and only if $M_0$ is free. 
    Indeed, evaluation at the last vertex $d_0 \circ \dots \circ d_0  \colon M_n \to M_0$ is equifibered so, if $M_0$ is free, the same holds for $M_n$ \cite[Corollary 2.1.11]{properads}.
\end{obs}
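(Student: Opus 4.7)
The plan is to prove both implications separately. One direction is immediate: if $M$ is level-wise free then in particular $M_0$ is free. For the converse, I assume $M_0$ is free and fix $n \geq 1$, and I want to conclude that $M_n$ is free as well.

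First I would unpack what being $\otimes$-disjunctive means: by definition $M$ is right-$\CMon^\eqf$-fibered, so each face map $d_0 \colon M_k \to M_{k-1}$ lies in $\CMon^\eqf$ for $k \geq 1$. Since equifibered morphisms are closed under composition (they form the right part of a factorisation system on $\CMon$, see \cite[\S 2]{properads}), the iterated last-vertex map
\[
    d_0^{\circ n} \colon M_n \longrightarrow M_0
\]
is itself equifibered.

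The main step is then to invoke \cite[Corollary 2.1.11]{properads}, which says that the source of an equifibered map whose target is a free commutative monoid is again free. Applied to the equifibered map $M_n \to M_0$ with free target, this yields freeness of $M_n$ and completes the proof.

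The only non-formal ingredient is the cited corollary about free monoids being closed under equifibered pullback; the rest follows by chasing definitions and the closure of $\CMon^\eqf$ under composition. Hence I do not anticipate any genuine obstacle beyond quoting this result from \cite{properads}.
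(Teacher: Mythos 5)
Your proposal is correct and follows exactly the paper's argument: the iterated face map $d_0 \circ \dots \circ d_0 \colon M_n \to M_0$ is equifibered (by the $\otimes$-disjunctive hypothesis and closure of equifibered maps under composition), and then \cite[Corollary 2.1.11]{properads} transfers freeness from $M_0$ to $M_n$. No differences worth noting.
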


As a consequence of the necklace formula we have the following:
\begin{lem}\label{lem:Segalify-tensor-disjunctive}
    Let $M \in \Fun(\Dop,\CMon)$ be $\otimes$-disjunctive. 
    Then $\Lseg (M)$ is $\otimes$-disjunctive. 
\end{lem}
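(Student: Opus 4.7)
The plan is to apply \cref{cor:right-fibered-usable} to the presentable \category{} $\calV = \CMon$ equipped with the factorization system whose right part is $\CMon^\eqf$ (established in \cite{properads}). Since the $\otimes$-disjunctive simplicial commutative monoids are by definition exactly the right-$\CMon^\eqf$-fibered objects, this will immediately give the claim. So it suffices to verify the two hypotheses of \cref{cor:right-fibered-usable} for this setup.

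For hypothesis (1), namely that sifted colimits in $\CMon$ are stable under base change, the key point is that $\CMon \simeq \Alg_{\mathbb{E}_\infty}(\calS)$, so the claim follows directly from \cref{ex:O-algebra-universal-colimits} applied to $\calV = \calS$ (where sifted colimits are universal because all colimits are).

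For hypothesis (2), we need that $\CMon^{(\eqf)}_{/X} \subseteq \CMon_{/X}$ is closed under sifted colimits. Given a sifted diagram $i \mapsto (f_i\colon M_i \to X)$ of equifibered maps with colimit $f\colon M \to X$, I would argue that the defining square
\[
\begin{tikzcd}
M \times M \ar[r, "+_M"] \ar[d, "f \times f"'] & M \ar[d, "f"] \\
X \times X \ar[r, "+_X"'] & X
\end{tikzcd}
\]
is cartesian as follows. First, since the forgetful functor $\CMon \to \calS$ preserves sifted colimits and sifted colimits commute with finite products in $\calS$, we have $M \times M \simeq \colim_i (M_i \times M_i)$. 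By assumption each square for $f_i$ is cartesian, i.e.~$M_i \times M_i \simeq M_i \times_X (X \times X)$. Taking the colimit and using that colimits in $\calS$ are universal yields $M \times M \simeq M \times_X (X \times X)$, which is precisely the equifibered condition for $f$.

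The main obstacle here is really just keeping the formal framework straight: once one recognizes that $\otimes$-disjunctive is a special case of right-$\calV^R$-fibered and that equifibered maps form the right class of a factorization system on $\CMon$, both the sifted-universality of $\CMon$ and the sifted-closure of $\CMon^{(\eqf)}_{/X}$ reduce to standard manipulations with the fact that the forgetful functor $\CMon \to \calS$ creates sifted colimits and that colimits in $\calS$ are universal.
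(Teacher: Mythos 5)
Your proposal is correct and follows essentially the same route as the paper: reduce to \cref{cor:right-fibered-usable} with $\calV = \CMon$ and $\calV^R = \CMon^{\eqf}$, and verify hypothesis (1) exactly as the paper does, via \cref{ex:O-algebra-universal-colimits}. The only divergence is that the paper disposes of hypothesis (2) by citing \cite[Lemma 2.1.22]{properads}, whereas you prove it inline; your argument (siftedness plus creation of sifted colimits by $\CMon \to \calS$ gives $M \times M \simeq \colim_i (M_i \times M_i)$, and universality of colimits in $\calS$ then transports the cartesian squares to the colimit) is a correct proof of that cited fact.
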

\begin{proof}
    It suffices to check the conditions of \cref{cor:right-fibered-usable}.
    The first condition was verified in \cref{ex:O-algebra-universal-colimits}.
    The second condition follows from \cite[Lemma 2.1.22]{properads}.
\end{proof}

We now give a description of \operads{} using equifibered maps.

\begin{defn}\label{defn:preoperads}
    A \hldef{pre-operad} is a Segal commutative monoid $M\in \Seg_\Dop(\CMon)$ which is $\otimes$-disjunctive and level-wise free. 
    A pre-operad is called \hldef{complete} if its underlying Segal space is.
\end{defn}

\begin{war}
    Pre-operads in the sense of \cref{defn:preoperads} should not be confused with $\infty$-preoperads in the sense of Lurie \cite[\S 2.1.4]{HA}.
    Instead, in the language of \cite{properads}, a pre-operad is precisely a monic pre-properad.
\end{war}

Pre-operads are to \operads{} what Segal spaces are to \categories{}.
Indeed the envelope functor induces an equivalence between Lurie's \operads{} and complete pre-operads.

\begin{thm}\label{thm:Env-equivalence}
    Lurie's monoidal envelope functor
    \(
        \Env(-)\colon \Op \to \SM
    \)
    is faithful (i.e.~it induces a monomorphism on mapping spaces).
    Moreover, the composite
    \[
        \Op \xrightarrow{\Env(-)} \SM 
        \stackrel{\xN_\bullet}{\simeq} 
        \CSeg(\CMon)
    \]
    identifies $\Op$ with the (non-full) subcategory of $\CSeg(\CMon)$ whose objects are complete pre-operads and whose morphisms are equifibered natural transformations.
\end{thm}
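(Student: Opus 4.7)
The plan is to prove the theorem in three steps. First I would verify that for every $\infty$-operad $\calO$ the nerve $\xN_\bullet \Env(\calO)$ is a complete pre-operad. Completeness and the Segal condition are immediate from $\xN_\bullet$ being the fully faithful nerve of an $\infty$-category. For level-wise freeness, it suffices by \cref{lem:free-tensor-disjunctive} to check that $\xN_\bullet\Env(\calO)$ is $\otimes$-disjunctive and that $\xN_0\Env(\calO) = \Env(\calO)^\simeq$ is free. The latter holds by the standard description of the envelope, whose underlying groupoid is the free symmetric monoidal groupoid on $\mathrm{col}(\calO)^\simeq$. For the former, I would use the universal property of the envelope: a symmetric monoidal functor out of $\Env(\calO)$ with codomain a symmetric monoidal $\infty$-category $\calE$ is the same as a map of operads from $\calO$ to the underlying operad of $\calE$. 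Applying this with $\calE = \Env(\calO)_{/x \otimes y}$ splits morphisms into $x\otimes y$ canonically as pairs of morphisms into $x$ and $y$, giving $\otimes$-disjunctiveness.

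Second, I would show that $\Env$ is faithful and that its image on mapping spaces consists precisely of the equifibered natural transformations. Given an operad map $f\colon \calO \to \calP$, the induced symmetric monoidal functor $\Env(f)$ acts on objects by the free map $\xF(f^\simeq)$, which is equifibered; combined with $\otimes$-disjunctiveness of source and target, \cref{obs:tensor-equifibered} and the closure of equifibered maps under pullback force the induced natural transformation on $\xN_\bullet$ to be level-wise equifibered. Conversely, given an equifibered symmetric monoidal functor $F\colon \Env(\calO) \to \Env(\calP)$, the equifibered map $F^\simeq\colon \xF(\mathrm{col}(\calO)^\simeq) \to \xF(\mathrm{col}(\calP)^\simeq)$ between free monoids must itself be free, hence arises from a map of color spaces $g\colon \mathrm{col}(\calO)^\simeq \to \mathrm{col}(\calP)^\simeq$. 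The multimorphism space $\mathrm{Mul}_\calO(c_1,\ldots,c_n; d)$ can be recovered from $\Env(\calO)$ as the subspace of $\Map_{\Env(\calO)}(c_1 \otimes \cdots \otimes c_n, d)$ with atomic target, and the equifibered condition ensures $F$ preserves atomicity of targets, giving a canonical lift of $F$ to an operad map $\calO \to \calP$.

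Third, I would prove essential surjectivity. Given a complete pre-operad $M \in \CSeg(\CMon)$, its associated symmetric monoidal $\infty$-category $\calD \coloneq \cat{M}$ has a free groupoid of objects and is $\otimes$-disjunctive by assumption. I would then construct an $\infty$-operad $\calO_\calD$ whose colors are the atoms of $\calD^\simeq$ and whose space of multimorphisms $(c_1,\ldots,c_n) \to d$ is $\Map_{\calD}(c_1 \otimes \cdots \otimes c_n, d)$ for atomic $d$; the composition structure is inherited from $\calD$. The canonical counit $\Env(\calO_\calD) \to \calD$ is then an equivalence, as both sides have the same groupoid of objects and, by $\otimes$-disjunctiveness on both sides, morphism spaces into a general target $d_1 \otimes \cdots \otimes d_m$ split canonically as a product of the atomic-target morphism spaces.

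The main obstacle will be the second step, specifically the backward implication: reconstructing an operad map from an equifibered symmetric monoidal functor between envelopes. This hinges on identifying the multimorphisms of $\calO$ internally to $\Env(\calO)$ as the atomic-target morphisms, and verifying that the equifibered condition is exactly what is needed to preserve this identification. This identification, together with the factorisation system on $\CMon$ whose right part is the equifibered maps, is likely developed in detail in \cite{properads}, which one should expect to cite heavily throughout the proof.
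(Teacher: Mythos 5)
The first thing to note is that the paper does not prove this theorem: immediately after the statement it is attributed to the literature, with the formulation as stated credited to \cite[Theorem 3.2.13]{properads} and the essential content to Haugseng--Kock \cite{HK21} (full faithfulness of the sliced envelope $\Env\colon\Op\to\SMover{\Fin}$ together with a characterisation of its image) and to \cite{envelopes}. So there is no in-paper proof to compare against; what you have written is a sketch of how one might prove the imported result, and it follows the same broad strategy as those references: identify $\xN_\bullet\Env(\calO)$ as a complete pre-operad, recover multimorphism spaces as morphisms with atomic target, and prove essential surjectivity by reconstructing an operad from a $\otimes$-disjunctive symmetric monoidal \category{} with free underlying groupoid.

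As a proof sketch it has two genuine gaps, both at the points where the real work in the cited references happens. First, in your step 2 the backward direction is not only about preserving atomic targets: a morphism in $\Op$ is a functor over $\Fin_*$, so to lift an equifibered symmetric monoidal functor $F\colon\Env(\calO)\to\Env(\calP)$ to an operad map you must first show that $F$ is compatible with the canonical projections $\Env(\calO)\to\Fin$, which a general symmetric monoidal functor between envelopes need not be. This is exactly where equifiberedness earns its keep: $\xF(*)$ is terminal among level-wise free monoids and \emph{equifibered} maps, so the projection to $\xN_\bullet\xF(*)$ is essentially unique and any equifibered $F$ automatically commutes with it. Without this observation (or the full faithfulness of the sliced envelope from \cite{HK21}, which packages it) the ``canonical lift'' you invoke does not exist as stated. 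Second, step 3 asserts the construction of $\calO_\calD$ with prescribed colours and multimorphism spaces and ``composition inherited from $\calD$''; producing the actual cocartesian fibration over $\Fin_*$ with all coherences, and verifying that the counit $\Env(\calO_\calD)\to\calD$ is an equivalence, is the bulk of the argument in \cite{envelopes} and \cite{properads} and cannot be waved through. Your closing paragraph correctly identifies where the difficulty lies; the honest conclusion is that the proposal is a reasonable roadmap through the cited literature rather than a self-contained proof, and the paper itself simply cites that literature.
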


The first instance of this theorem can be found in the work of Haugseng--Kock \cite{HK21}, who showed that the sliced functor $\Env\colon \Op \to \SMover{\Fin}$ is fully faithful and characterised its image.
Barkan--Haugseng--Steinebrunner \cite{envelopes} then gave an alternative characterization of the image, closely related to pre-operads.
The above formulation was given in \cite[Theorem 3.2.13]{properads}.

\subsubsection{Tensor products of \texorpdfstring{$\infty$-operads}{infinity operads}}

We can now apply the necklace formula for Segalification to show that pre-operads are closed under the tensor product.

\begin{prop}\label{prop:tensor-of-pre-operads}
    The replete subcategory $\pOp \subseteq \Seg_\Dop(\CMon)$ is closed under the tensor product. 
\end{prop}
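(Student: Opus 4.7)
The plan is to exploit that $\Lseg \colon \Fun(\Dop,\CMon) \to \Seg_\Dop(\CMon)$ is symmetric monoidal by \cref{cor:Lseg-and-Lc-symmetric-monoidal} and that $M, N \in \pOp$ are already Segal, so that
\[ M \otimes N \simeq \Lseg(M \otimes_{\mathrm{pt}} N), \]
where $\otimes_{\mathrm{pt}}$ denotes the tensor product computed pointwise in $\CMon$. I will first verify that $M \otimes_{\mathrm{pt}} N$ is already level-wise free and $\otimes$-disjunctive (it need not yet be Segal), and then transfer these properties across $\Lseg$.

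For the pointwise tensor: since $M_n$ and $N_n$ are free and the free functor $\xF\colon \calS \to \CMon$ is symmetric monoidal, the level $(M \otimes_{\mathrm{pt}} N)_n \simeq M_n \otimes N_n$ is again free. The face map $d_0^{M \otimes_{\mathrm{pt}} N} = d_0^M \otimes d_0^N$ is the tensor product of two equifibered maps between free monoids, which is again equifibered by \cref{obs:tensor-equifibered}. Hence $M \otimes_{\mathrm{pt}} N$ is right-$\CMon^\eqf$-fibered, i.e.~$\otimes$-disjunctive.

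For Segalification: \cref{lem:Segalify-tensor-disjunctive}, whose proof rests on the necklace formula via \cref{cor:right-fibered-usable}, tells us $\Lseg(M \otimes_{\mathrm{pt}} N)$ remains $\otimes$-disjunctive. Since Segalification does not alter $0$-simplices (cf.~\cref{obs:rewriting-Q}), we have $(\Lseg(M \otimes_{\mathrm{pt}} N))_0 \simeq M_0 \otimes N_0$, which is free. By \cref{lem:free-tensor-disjunctive}, a $\otimes$-disjunctive simplicial commutative monoid with free $0$-simplices is level-wise free, so $\Lseg(M \otimes_{\mathrm{pt}} N)$ is a pre-operad.

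I do not anticipate a serious obstacle; all the ingredients have been carefully assembled earlier. The only real subtlety is the initial identification $M \otimes N \simeq \Lseg(M \otimes_{\mathrm{pt}} N)$, which is a formal consequence of $\Lseg$ being a symmetric monoidal localization from $\Fun(\Dop,\CMon)$ (equipped with the pointwise tensor) onto $\Seg_\Dop(\CMon)$.
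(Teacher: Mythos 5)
Your proof is correct and follows essentially the same route as the paper's: both reduce to showing (a) that the pointwise tensor product of two level-wise free, $\otimes$-disjunctive simplicial commutative monoids retains these properties via \cref{obs:tensor-equifibered}, and (b) that $\Lseg$ preserves them via \cref{lem:Segalify-tensor-disjunctive} together with \cref{lem:free-tensor-disjunctive} and the fact that Segalification fixes $0$-simplices. The only difference is presentational (you spell out the identification $M \otimes N \simeq \Lseg(M \otimes_{\mathrm{pt}} N)$, which the paper leaves implicit), so there is nothing to correct.
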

\begin{proof}
    First we claim that if $M \colon \Dop \to \CMon$ is level-wise free and $\otimes$-disjunctive then the same holds for $\Lseg M$. 
    Indeed \cref{lem:Segalify-tensor-disjunctive} shows that $\Lseg(M)$ is $\otimes$-disjunctive and since $\Lseg(M)_0 \simeq M_0$ is free the claim follows from \cref{lem:free-tensor-disjunctive}.
    
    To complete the proof it suffices to show that if $M, N \in \Fun(\Dop,\CMon)$ are $\otimes$-disjunctive and level-wise free, then the same holds for their tensor product $M\otimes N$.
    This follows from the fact that equifibered maps between free monoids are closed under the tensor product (\cref{obs:tensor-equifibered}).
\end{proof}

We are now in a position to prove \cref{thmA:BV-tensor}.

\begin{proof}[Proof of \cref{thmA:BV-tensor}]
    For the first part it suffices by \cref{thm:Env-equivalence} to show that complete pre-operads are closed under the tensor product.
    By \cref{cor:Lseg-and-Lc-symmetric-monoidal}
    the equivalence $\SM \simeq \CSeg_\Dop(\CMon)$ identifies the tensor product of symmetric monoidal \categories{} with the following bi-functor on complete Segal monoids
    \[(M_\bullet,N_\bullet) \longmapsto \Lcss(M_\bullet \otimes N_\bullet) \simeq \Lcpl \Lseg(M_\bullet \otimes N_\bullet).\]
    Suppose now that $M_\bullet$ and $N_\bullet$ are pre-operads.
    By \cref{prop:tensor-of-pre-operads}, $\Lseg(M_\bullet \otimes N_\bullet)$ is a pre-operad and hence by \cite[Proposition 3.4.7]{properads} so is the completion $\Lcpl \Lseg(M_\bullet \otimes N_\bullet)$.
    
    For the second part we must compare $\otimes_{\rm BV}$ to Lurie's tensor product.
    It follows from \cref{lem:Lurie-tensor-and-Env} below that there is an equivalence
    $\Env(\calO \otimes_\Lurie \calP) \simeq \Env(\calO \otimes_{\rm BV} \calP)$
    for all \operads{} $\calO$ and $\calP$.
    And since $\Env$ is an equivalence onto a replete subcategory by \cref{thm:Env-equivalence}, it follows that we already have such an equivalence before applying $\Env$.
\end{proof}

\subsubsection{Comparison to Lurie's Boardman--Vogt tensor product}
To complete \cref{thmA:BV-tensor} we need to compare Lurie's Boardman--Vogt tensor product to the tensor product of symmetric monoidal \categories{}.
Lurie defines the tensor product of two \operads{} $\calO \to \Fin_*$ and $\calP \to \Fin_*$ to be the universal \operad{} representing bifunctors of \operads{}.
A bifunctor of \operads{} \cite[Definition 2.2.5.3]{HA} is a functor $F \colon \calO \times \calP \to \calQ$ together with a square
\[\begin{tikzcd}
    \calO \times \calP \ar[d] \ar[r, dashed, "F"] &
    \calQ \ar[d] \\
    \Fin_* \times \Fin_* \ar[r, "\wedge"] & 
    \Fin_*
\end{tikzcd}\]
such that $F$ preserves cocartesian lifts of inert morphisms.
We begin by giving an analogous characterisation of the tensor product of symmetric monoidal \categories{}.

Let $\calV \in \PrL$ be a cartesian closed presentable \category, such as $\Cat$.
The smash product of finite pointed sets $A_+ \wedge B_+ = (A \times B)_+$ induces via Day convolution \cite[\S 3]{shay-HigherMonoids} a symmetric monoidal structure on the \category{} of functors $\Fun(\Fin_*, \calV)$.
Writing $\mu\coloneq \wedge \colon \Fin_* \times \Fin_* \to \Fin_*$ for the smash product functor, we can describe the Day convolution as
$F \boxtimes_{\rm Day} G = \mu_!(F \times G).$
This tensor product localises to a symmetric monoidal structure on the full subcategory of commutative monoids $\CMon(\calV) \subset \Fun(\Fin_*, \calV)$:
\begin{prop}\label{prop:tensor=localised-Day}
    The left adjoint in the localisation adjunction
    \[
        \bbL\colon \Fun(\Fin_*, \calV) \adj \CMon(\calV)
    \]
    admits a symmetric monoidal structure with respect to the Day convolution on the functor category and the tensor product of commutative monoids on the right.
\end{prop}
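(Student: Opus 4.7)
I will apply the standard criterion \cite[Proposition 2.2.1.9]{HA} for upgrading a reflective localisation of a symmetric monoidal $\infty$-category to a symmetric monoidal one: it suffices to show that if $f$ is an $\bbL$-local equivalence in $\Fun(\Fin_*, \calV)$ and $Z \in \Fun(\Fin_*, \calV)$ is arbitrary, then $f \boxtimes_{\rm Day} Z$ is again an $\bbL$-local equivalence. Once established, this produces a symmetric monoidal structure on $\CMon(\calV)$ for which $\bbL$ is symmetric monoidal.

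A functor $M \colon \Fin_* \to \calV$ is a commutative monoid iff it is local with respect to the Segal maps
\[
s_n \colon \coprod_{i=1}^n h^{1_+} \longrightarrow h^{n_+}, \qquad n \ge 0,
\]
induced by the inerts $\rho_i \colon n_+ \to 1_+$, where $h^X \coloneq \Map_{\Fin_*}(X,-)$ denotes the covariant representable. Consequently the strongly saturated class of $\bbL$-equivalences is generated by $\{s_n\}_{n \ge 0}$. Since Day convolution preserves colimits separately in each variable and representables generate $\Fun(\Fin_*, \calV)$ under colimits, it is enough to check that $s_n \boxtimes_{\rm Day} h^Y$ is an $\bbL$-equivalence for every $n$ and every $Y \in \Fin_*$.

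The key calculational input is the standard fact that Day convolution of representables satisfies $h^X \boxtimes_{\rm Day} h^Y \simeq h^{X \wedge Y}$, which is immediate from the defining formula $F \boxtimes_{\rm Day} G = \mu_!(F \times G)$ via coend Yoneda. Under this identification, $s_n \boxtimes_{\rm Day} h^Y$ becomes the map $\coprod_{i=1}^n h^Y \to h^{n_+ \wedge Y}$ induced by $\rho_i \wedge \mathrm{id}_Y \colon n_+ \wedge Y \to Y$. Since $n_+ \wedge Y$ is the $n$-fold wedge $Y^{\vee n}$ and these maps are the projections onto the individual wedge summands, testing against a commutative monoid $F$ yields the map $F(Y^{\vee n}) \to F(Y)^n$, which is an equivalence because $F$ sends wedges in $\Fin_*$ to products in $\calV$. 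This verifies the criterion.

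Finally, to identify the resulting symmetric monoidal structure on $\CMon(\calV)$ with the Gepner--Groth--Nikolaus tensor product, I invoke the uniqueness clause of \cite{gepner-universality}: it suffices to check that the free functor $\xF \colon \calV \to \CMon(\calV)$ remains symmetric monoidal for the localised structure. Writing $\xF(X) \simeq \bbL(X \otimes h^{1_+})$ (the left adjoint to evaluation at $1_+$ followed by $\bbL$) and using the symmetric monoidality of $\bbL$, this reduces to the identity $h^{1_+} \boxtimes_{\rm Day} h^{1_+} \simeq h^{1_+ \wedge 1_+} = h^{1_+}$. The main subtleties are the Day convolution bookkeeping and the identification of the generating $\bbL$-equivalences with the Segal maps; the rest is a direct computation with finite pointed sets.
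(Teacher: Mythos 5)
Your argument is correct and is essentially the paper's proof in adjoint form: both reduce to checking that the localisation onto $\CMon(\calV)$ is compatible with Day convolution and then identify the result with the Gepner--Groth--Nikolaus tensor product via symmetric monoidality of the free functor. Your computation that $s_n \boxtimes_{\rm Day} h^Y$ is an $\bbL$-equivalence — i.e.\ that $\Map(s_n \boxtimes h^Y, F) \simeq \Map(s_n, F(Y \wedge -))$ is an equivalence for every commutative monoid $F$ — is precisely the paper's check that $X(A_+ \wedge -)$ is again a commutative monoid when $X$ is, just tested against generating equivalences rather than stated for internal homs. The one genuine imprecision is your claim that the representables $h^Y$ generate $\Fun(\Fin_*, \calV)$ under colimits: this fails for a general cartesian closed presentable $\calV$ not generated by its terminal object (in particular for $\calV = \Cat$, the case the paper actually needs). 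You must instead use the objects $h^Y \otimes C$ with $C$ ranging over a generating set of $\calV$; since Day convolution is $\calV$-linear, this costs nothing — testing $(s_n \otimes C) \boxtimes (h^Y \otimes C')$ against a monoid $M$ yields $\Map_\calV(C \times C', M(Y^{\vee n})) \to \Map_\calV(C \times C', M(Y))^{\times n}$, which is an equivalence by the same wedge-to-product observation — but the step as written is false and should be repaired.
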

\begin{proof}
    First we argue that the Day convolution symmetric monoidal structure localises to the full subcategory $\CMon(\calV)$,
    i.e.~that there is a (unique) symmetric monoidal structure on $\CMon(\calV)$ for which $\bbL$ is symmetric monoidal.
    This follows by essentially the same argument as \cite[Proposition 4.24]{shay-HigherMonoids}, except that we need to check the analogue of the second part of \cite[Lemma 4.22]{shay-HigherMonoids}.
    Indeed, if $X \colon \Fin_\ast \to \calV$ is a commutative monoid and $A_+ \in \Fin_*$ then $X(A_+ \wedge -)$ is still a commutative monoid as can be seen by
    \[\begin{tikzcd}
	{ X(A_+ \wedge B_+) } & {\displaystyle\prod_{b \in B} X(A_+ \wedge \{b\}_+)} & {   \displaystyle \prod_{(a,b) \in A \times B} X(\{a\}_+ \wedge \{b\}_+).}
	\arrow["\simeq", curve={height=-18pt}, from=1-1, to=1-3]
	\arrow[from=1-1, to=1-2]
	\arrow["\simeq", from=1-2, to=1-3]
    \end{tikzcd}\]
    Now proceed as in \cite[Theorem 4.26]{shay-HigherMonoids} to argue that the localised Day convolution symmetric monoidal structure on $\CMon(\calV)$ satisfies the universal property of \cite[Theorem 5.1]{gepner-universality}.
\end{proof}

Given a symmetric monoidal \category{} $\calC \in \SM$ we let $\calC^\otimes \coloneq \Un(\calC \colon \Fin_\ast \to \Cat) \to \Fin_\ast$ denote its associated cocartesian fibration.
With this notation $\Map_{\SM}(\calC,\calD) = \Map_{/\Fin_\ast}^{\rm cocart}(\calC^\otimes,\calD^\otimes)$ where the latter denotes the subspace of $\Map_{/\Fin_\ast}(\calC^\otimes,\calD^\otimes)$ spanned by functors which preserve cocartesian edges. 
(In \cite{HA} this is taken as a definition.) 
Similarly, we have $\Map_{\Op}(\calO, \calP) = \Map_{/\Fin_\ast}^{\rm int-cocart}(\calO, \calP)$.
In this setting, the tensor product admits the following characterization:

\begin{cor}
    If $\calC^\otimes$ and $\calD^\otimes$ are (unstraightened) symmetric monoidal \categories{}, then there is an equivalence
    \[
        \Map_{/\Fin_*}^{\rm cocart}(\calC^\otimes \otimes \calD^\otimes, \calE^\otimes) 
        \simeq \Map_{/\Fin_* \times \Fin_*}^{\rm cocart}(\calC^\otimes \times \calD^\otimes, \mu^* \calE^\otimes)
    \]
    natural in the symmetric monoidal \category{} $\calE^\otimes$.
\end{cor}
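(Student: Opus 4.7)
The plan is to chain together a sequence of adjunctions starting from \cref{prop:tensor=localised-Day}. First, I would translate the left-hand side back into the straightened picture: using the equivalence $\SM \simeq \CMon(\Cat)$ (which sends $\calE$ to $\calE^\otimes \to \Fin_*$ and reads mapping spaces as cocartesian-preserving functors over $\Fin_*$), we have
\[
  \Map_{/\Fin_*}^{\rm cocart}(\calC^\otimes \otimes \calD^\otimes, \calE^\otimes) \simeq \Map_{\CMon(\Cat)}(\calC \otimes \calD, \calE).
\]

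Next, I would invoke \cref{prop:tensor=localised-Day}, which exhibits $\bbL\colon \Fun(\Fin_*,\Cat) \to \CMon(\Cat)$ as a symmetric monoidal localization with respect to Day convolution on the source and the tensor product on the target. Since both $\calC$ and $\calD$ are already commutative monoids (so $\bbL$ fixes them), symmetric monoidality of $\bbL$ gives an equivalence $\calC \otimes \calD \simeq \bbL(\calC \boxtimes_{\rm Day} \calD)$. The localization adjunction $\bbL \dashv \iota$ then yields
\[
  \Map_{\CMon(\Cat)}(\bbL(\calC \boxtimes_{\rm Day} \calD), \calE) \simeq \Map_{\Fun(\Fin_*,\Cat)}(\calC \boxtimes_{\rm Day} \calD, \calE).
\]

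Now I would unwind the definition of Day convolution as $\boxtimes_{\rm Day} = \mu_!(-\times -)$. The $\mu_! \dashv \mu^*$ adjunction rewrites the preceding mapping space as
\[
  \Map_{\Fun(\Fin_* \times \Fin_*, \Cat)}(\calC \times \calD, \mu^*\calE).
\]
Finally, I would apply the straightening/unstraightening equivalence $\Fun(\Fin_*^{\times 2},\Cat) \simeq \Cat_{/\Fin_*^{\times 2}}^{\rm cocart}$ together with the observation that pulling back a cocartesian fibration along $\mu$ produces a cocartesian fibration and is compatible with the unstraightening of $\mu^*(-)$, i.e.~$\Un(\mu^*\calE) \simeq \mu^*(\calE^\otimes)$. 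This converts the last expression into the right-hand side.

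The step requiring some care is the very last one: checking that unstraightening identifies natural transformations $\calC \times \calD \to \mu^*\calE$ with cocartesian-edge-preserving functors $\calC^\otimes \times \calD^\otimes \to \mu^*\calE^\otimes$ over $\Fin_*^{\times 2}$. This follows from the naturality of unstraightening under base change together with the fact that unstraightening sends natural transformations to cocartesian-preserving maps, but verifying the compatibility of pullback of cocartesian fibrations with unstraightening is the only non-formal input. All the other steps are direct applications of adjunctions and of \cref{prop:tensor=localised-Day}, so this is where the bookkeeping needs to be done carefully. Naturality in $\calE$ is automatic, since each of the displayed equivalences is visibly natural in its commutative-monoid input.
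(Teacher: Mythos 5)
Your argument is correct and matches the paper's intent: the corollary is stated there as an immediate consequence of \cref{prop:tensor=localised-Day}, obtained exactly by the chain you describe (symmetric monoidality of the localization $\bbL$, the $\bbL \dashv \iota$ and $\mu_! \dashv \mu^*$ adjunctions, and straightening/unstraightening to translate between natural transformations and cocartesian-edge-preserving functors over the base). The paper leaves these steps implicit, so your write-up simply makes the intended proof explicit.
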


This is entirely analogous to the universal property of the Boardman--Vogt tensor product of \operads{} in \cite[Definition 2.2.5.3 and Remark 2.2.5.4]{HA}, which in this language may be stated as:
\[
    \Map_{/\Fin_*}^{\rm int-cocart}(\calO \otimes \calP, \calQ) 
    \simeq \Map_{/\Fin_* \times \Fin_*}^{\rm int-cocart}(\calO \times \calP, \mu^* \calQ).
\]

\begin{lem}\label{lem:Lurie-tensor-and-Env}
     Lurie's tensor product satisfies that for any two \operads{} $\calO$ and $\calP$ there is a canonical equivalence:
    \[
    \Env(\calO \otimes_{\Lurie} \calP) \simeq
    \Env(\calO) \otimes \Env(\calP) 
    \]
\end{lem}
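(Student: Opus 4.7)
The plan is to test both sides against an arbitrary symmetric monoidal \category{} $\calE \in \SM$ and appeal to Yoneda. For the left hand side, I would combine the adjunction $\Env \dashv \mathrm{incl} \colon \SM \hookrightarrow \Op$ with the universal property of Lurie's tensor product that was just recalled, to obtain
\[
\Map_{\SM}(\Env(\calO \otimes_\Lurie \calP),\calE)
\simeq \Map_{\Op}(\calO \otimes_\Lurie \calP, \calE^\otimes)
\simeq \Map^{\rm int\text{-}cocart}_{/\Fin_* \times \Fin_*}(\calO \times \calP, \mu^* \calE^\otimes).
\]
For the right hand side, the preceding corollary (the Day-convolution characterisation of $\otimes$ on $\SM$) gives
\[
\Map_{\SM}(\Env(\calO) \otimes \Env(\calP),\calE)
\simeq \Map^{\rm cocart}_{/\Fin_* \times \Fin_*}(\Env(\calO)^\otimes \times \Env(\calP)^\otimes, \mu^* \calE^\otimes).
\]

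The task is therefore to identify these two mapping spaces. The key structural observation is that $\mu^* \calE^\otimes \to \Fin_* \times \Fin_*$ is a cocartesian fibration with respect to \emph{each} projection, because $\calE^\otimes \to \Fin_*$ is one and $\mu$ commutes with inert morphisms in either variable. This endows $\mu^* \calE^\otimes$, viewed via either projection, with the structure of a symmetric monoidal \category{} parametrised over $\Fin_*$. I would then apply the envelope adjunction one factor at a time: fixing the cocartesian structure in the second factor and using the slice-extended adjunction $\Env_{/B} \colon \Op_{/B} \rightleftarrows \SM_{/B}$ with $B = \mu^* \calE^\otimes$ regarded through its second projection, one converts the $\mathrm{int\text{-}cocart}$ condition on $\calO$ into a $\mathrm{cocart}$ condition on $\Env(\calO)^\otimes$, yielding
\[
\Map^{\rm int\text{-}cocart}_{/\Fin_* \times \Fin_*}(\calO \times \calP, \mu^* \calE^\otimes)
\simeq \Map^{\substack{\rm cocart\ in\ 1st\\ \rm int\text{-}cocart\ in\ 2nd}}_{/\Fin_* \times \Fin_*}(\Env(\calO)^\otimes \times \calP, \mu^* \calE^\otimes).
\]
Repeating the argument symmetrically for the $\calP$-factor converts the remaining $\mathrm{int\text{-}cocart}$ condition to $\mathrm{cocart}$, producing the desired equivalence.

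The main obstacle is making the ``apply $\Env$ one factor at a time'' step rigorous: one has to verify that the envelope adjunction prolongs to the appropriate relative setting, i.e.~that for a fixed symmetric monoidal category $\calF^\otimes \to \Fin_*$ regarded as a base, mapping an operad $\calO$ into a ``bi-fibration'' $\calG \to \Fin_* \times \calF^\otimes$ preserving inert-cocartesian lifts in the first factor and cocartesian lifts in the second is the same as mapping $\Env(\calO)^\otimes$ in preserving cocartesian lifts in both. Once this relative envelope adjunction is in hand, the full argument is a formal consequence of the universal properties already cited.
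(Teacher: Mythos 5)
Your outer steps match the paper's: test against an arbitrary $\calE \in \SM$, use $\Env \dashv \mathrm{incl}$ together with Lurie's universal property on one side, and the Day-convolution description of $\otimes$ on $\SM$ on the other, reducing everything to an identification of $\Map^{\rm int\text{-}cocart}_{/\Fin_*\times\Fin_*}(\calO\times\calP,\mu^*\calE^\otimes)$ with $\Map^{\rm cocart}_{/\Fin_*\times\Fin_*}(\Env(\calO)^\otimes\times\Env(\calP)^\otimes,\mu^*\calE^\otimes)$. The problem is that this identification is the entire mathematical content of the lemma, and your proposed mechanism for it --- applying the envelope ``one factor at a time'' via a slice-extended adjunction over $B=\mu^*\calE^\otimes$ regarded through one projection --- is exactly the step you concede you cannot yet make rigorous. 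As stated it would require a parametrized envelope adjunction relative to a symmetric monoidal base (an $\Env_{/B}$ for a bifibration over $\Fin_*\times\calF^\otimes$), which is not among the results you cite and whose formulation is itself delicate; so the proof has a genuine gap at its only nontrivial point.

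The gap can be closed without any one-variable-at-a-time argument. The envelope construction makes sense over any base $\calQ$ equipped with an (inert, active) factorization system, via $\Env_\calQ(\calO\to\calQ)\coloneq\calO\times_\calQ\Ar^\act(\calQ)$, and for any cocartesian fibration $\calE\to\calQ$ one has the adjunction equivalence $\Fun^{\rm int\text{-}cocart}_{/\calQ}(\calO,\calE)\simeq\Fun^{\rm cocart}_{/\calQ}(\Env_\calQ(\calO),\calE)$ (\cite[Proposition 2.2.4]{envelopes}). Taking $\calQ=\Fin_*\times\Fin_*$ with the product factorization system converts \emph{both} inert-cocartesian conditions to cocartesian ones in a single application, and the identification of the result with $\Env(\calO)\times\Env(\calP)$ is then immediate from $\Ar^\act(\Fin_*\times\Fin_*)\simeq\Ar^\act(\Fin_*)\times\Ar^\act(\Fin_*)$, since the fiber product defining $\Env_{\Fin_*\times\Fin_*}(\calO\times\calP)$ splits as a product. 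This replaces the unproven relative adjunction you need with a single citable statement over the product base.
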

The proof of \cref{lem:Lurie-tensor-and-Env} will use a variant of Lurie's symmetric monoidal envelope construction in which $\Fin_\ast$ is replaced by $\Fin_\ast \times \Fin_\ast$. 
More precisely, let $\calQ$ be an \category{} with a factorisation system $(\calQ^\xint, \calQ^\act)$ and let $p\colon \calO \to \calQ$ be a functor with cocartesian lifts for inerts. 
Then the envelope of $p$ is defined as:
\[
    \Env_\calQ(p\colon \calO \to \calQ) \coloneq \calO \times_{\calQ} \Ar^\act(\calQ),
\]
where $\Ar^\act(\calQ) \subset \Ar(\calQ)$ denotes the full subcategory of the arrow category spanned by active morphisms. 
This functor was studied extensively in \cite[\S 3]{shah2022parametrized} and \cite[\S 2]{envelopes}.
In particular, for any cocartesian fibration $\calE \to \calQ$,  \cite[Proposition 2.2.4]{envelopes} provides an equivalence: 
\[ \Fun_{/Q}^{\rm int-cocart}(\calO, \calE) \simeq \Fun_{/Q}^{\rm cocart}(  \Env_\calQ(p\colon \calO \to \calQ) ,\calE). \tag{$\star$} \label{eq:envelope}\]
\begin{proof}
    As before, let $\mu \colon \Fin_\ast \times \Fin_\ast \to \Fin_\ast$ denote the smash product.
    For any $\calC \in \SM$ we have
    \begin{align*}
        \Fun^\otimes(\Env(\calO \otimes_{\rm Lurie}\calP), \calC) & \simeq \Alg_{\calO \otimes_{\Lurie} \calP}(\calC)\\
        &\simeq \Fun_{/(\Fin_* \times \Fin_*)}^{\rm int-cocart}( \calO \times \calP, \mu^* \calC)  \\
        &\simeq \Fun_{/(\Fin_* \times \Fin_*)}^{\rm cocart}( \Env_{\Fin_* \times \Fin_*}(\calO \times \calP), \mu^* \calC) \qquad \qquad \qquad \text{\hyperref[eq:envelope]{$(\star)$}}  \\
        &\simeq \Fun_{/(\Fin_* \times \Fin_*)}^{\rm cocart}( \Env_{\Fin_*}(\calO) \times \Env_{\Fin_*}(\calP), \mu^* \calC) \\ 
        &\simeq \Fun_{/(\Fin_* \times \Fin_*)}^{\rm cocart}( \Env(\calO) \times \Env(\calP), \mu^* \calC) \\
        & \simeq \Fun^\otimes(\Env(\calO) \otimes \Env(\calP), \calC),
    \end{align*}
    where the fourth equivalence uses the identification $\Ar^\act(\Fin_\ast \times \Fin_\ast) \simeq \Ar^\act(\Fin_\ast) \times \Ar^\act(\Fin_\ast)$.
\end{proof}

\printbibliography[heading=bibintoc]

\end{document}